\newtheorem{proposition}{Proposition}[section]
\newtheorem{theorem}[proposition]{Theorem}
\newtheorem{lemma}[proposition]{Lemma}
\newtheorem{prop}[proposition]{Proposition}
\newtheorem{cor}[proposition]{Corollary}
\newtheorem{conj}[proposition]{Conjecture}
\theoremstyle{definition}
\newtheorem{example}[proposition]{Example}
\newtheorem{definition}[proposition]{Definition}
\theoremstyle{remark}
\newtheorem{remark}[proposition]{Remark}
\numberwithin{equation}{section}
\newcounter{margincounter}
\newcommand{\newword}[1]{\textbf{\emph{#1}}}
\newcommand{\integers}{\mathbb Z}
\newcommand{\rationals}{\mathbb Q}
\newcommand{\reals}{\mathbb R}
\newcommand{\ep}{\varepsilon}
\newcommand{\sgn}{\operatorname{sgn}}
\newcommand{\set}[1]{{\left\lbrace #1 \right\rbrace}}
\newcommand{\A}{{\mathcal A}}
\newcommand{\F}{{\mathcal F}}
\newcommand{\dashname}[1]{\stackrel{#1}{\mbox{---\!---}}}
\renewcommand{\th}{^\mathrm{th}}
\newcommand{\g}{\mathbf{g}}
\renewcommand{\b}{\mathbf{b}}
\renewcommand{\k}{\mathbf{k}}
\renewcommand{\a}{\mathbf{a}}
\newcommand{\x}{\mathbf{x}}
\renewcommand{\v}{\mathbf{v}}
\newcommand{\tB}{\tilde{B}}
\newcommand{\T}{\mathbb{T}}
\newcommand{\M}{\mathcal{M}}
\renewcommand{\P}{\mathbb{P}}
\newcommand{\K}{\mathbb{K}}
\newcommand{\Trop}{\operatorname{Trop}}
\renewcommand{\S}{\mathbf{S}}
\renewcommand{\M}{\mathbf{M}}
\title{Universal geometric cluster algebras from surfaces}
\author{Nathan Reading}
\thanks{This material is based upon work partially supported by the Simons Foundation under Grant Number 209288 and by the National Science Foundation under Grant Number DMS-1101568.}
\subjclass[2010]{13F60, 57Q15}
\begin{document}
\begin{abstract}
A universal geometric cluster algebra over an exchange matrix $B$ is a universal object in the category of geometric cluster algebras over $B$ related by coefficient specializations.
(Following an earlier paper on universal geometric cluster algebras, we broaden the definition of geometric cluster algebras relative to the definition originally given by Fomin and Zelevinsky.)
The universal objects are closely related to a fan $\F_B$ called the mutation fan for~$B$.
In this paper, we consider universal geometric cluster algebras and mutation fans for cluster algebras arising from marked surfaces.
We identify two crucial properties of marked surfaces:  The Curve Separation Property and the Null Tangle Property.
The latter property implies the former.
We prove the Curve Separation Property for all marked surfaces except once-punctured surfaces without boundary components, and as a result we obtain a construction of the rational part of $\F_B$ for these surfaces.
We prove the Null Tangle Property for a smaller family of surfaces and use it to construct universal geometric coefficients for these surfaces.
\end{abstract}
\maketitle

\setcounter{tocdepth}{1}
\tableofcontents

\section{Introduction}
The combinatorial datum that specifies a cluster algebra of geometric type is an extended exchange matrix $\tB$, which consists of a skew-symmetrizable integer matrix $B$, called an exchange matrix, and some additional rows called the coefficient rows of~$\tB$.
Certain exchange matrices $B$ are associated to tagged triangulations of marked surfaces, as explained in \cite{cats1,cats2}. 
In this case, the choice of coefficient rows amounts to choosing a set of laminations in the surface.
A lamination is a collection of weighted curves in the surface.
The corresponding coefficient row is a vector known as the shear coordinates of the lamination.
The notion of laminations predates the notion of cluster algebras.
References are found in \cite[Sections~12--13]{cats2}.

Given an exchange matrix $B$, we consider the category of cluster algebras of geometric type associated to $B$, with maps given by certain specializations of the coefficients.
We follow \cite{universal} in broadening the category of cluster algebras of geometric type to allow infinitely many coefficient rows with not-necessarily integral entries, and then narrowing the definition of coefficient specialization, as compared with the definitions in~\cite{ca4}. 
The category also depends on an underlying ring $R$, usually $\integers$, $\rationals$, or $\reals$.
If this category contains a universal object, the universal object is called the universal geometric cluster algebra for $B$ over~$R$.
Universal geometric cluster algebras over $R$ are known to exist \cite[Corollary~4.7]{universal} when $R$ is a field, but there is no general theorem guaranteeing the existence of universal geometric cluster algebras over $\integers$.
However, the existence of universal geometric cluster algebras over $\integers$ is proved for various $B$ in \cite{universal,unitorus}, and for some additional $B$ in this paper, as we explain below.

Constructing a universal geometric cluster algebra is equivalent to constructing a universal extended exchange matrix over $B$ by specifying a collection of coefficient rows, or universal geometric coefficients over $R$.
The main theme of \cite{universal} is to make the connection between universal geometric coefficients and the piecewise-linear geometry of a fan $\F_B$, called the mutation fan.

The goal of this paper is to construct the mutation fan and universal geometric coefficients in the case of marked surfaces.
Both constructions involve a variant on laminations that we call quasi-laminations.
We show that the fan defined naturally by quasi-laminations equals the rational part of the mutation fan if and only if the surface has a property that we call the Curve Separation Property (Theorem~\ref{rat FB surfaces}).
In most cases, the Curve Separation Property is the following:
Given two incompatible allowable curves (in the sense of quasi-laminations), there exists a tagged triangulation of the surface such that the shear coordinates of the two curves have strictly opposite signs in at least one entry.
However, in the case of a once-punctured surface without boundary, the statement of the Curve Separation Property must be altered slightly.
(See Definition~\ref{curve sep def}.)
A long but elementary argument establishes the Curve Separation Property for all marked surfaces except once-punctured surfaces without boundary (Theorem~\ref{curve separation}).

We complete the construction (Corollary~\ref{poly grow univ}) of universal geometric coefficients (over $\integers$ or $\rationals$) for a smaller family of surfaces.
Specifically, we show in Theorem~\ref{null tangle theorem} that the shear coordinates of allowable curves (in the sense of quasi-laminations) are universal geometric coefficients if and only if the surface has a stronger property called the Null Tangle Property:
If a finite collection of weighted curves has the weighted sum of its shear coordinates equal to zero for all tagged triangulations, then all weights are zero.
(Once again, the statement of the property must be modified for once-punctured surfaces without boundary.)
We prove the Null Tangle Property in certain cases by extending some of the arguments that establish the Curve Separation Property.
Specifically, Theorem~\ref{poly grow null tangle} establishes the Null Tangle Property for a disk with at most $2$ punctures, for an annulus with at most~$1$ puncture, and for a sphere with 3 boundary components and no punctures.

It remains open to determine whether once-punctured surfaces without boundary have the Curve Separation Property, and to determine which additional surfaces have the Null Tangle Property.
However, in \cite{unitorus}, we establish the Null Tangle Property (and thus the Curve Separation Property) in the simplest case not covered in this paper:  the once-punctured torus.
This leads to an explicit construction of universal geometric coefficients over $\integers$, $\rationals$ or $\reals$ for the once-punctured torus.

\section{Cluster algebras with universal geometric coefficients}
In this section, we briefly review the construction of cluster algebras of geometric type, in the broader sense introduced in~\cite{universal}.
We describe coefficient specialization and characterize the universal objects under specialization, the universal geometric cluster algebras or cluster algebras with universal geometric coefficients.
We also present some results from~\cite{universal} that are useful in constructing universal geometric coefficients for cluster algebras from surfaces.
All of the constructions depend on a choice of an \newword{underlying ring} $R$.
We require that $R$ is either the integers $\integers$ or a field containing the rationals $\rationals$ as a subfield and contained as a subfield of the real numbers $\reals$.
We are most interested in the cases where $R$ is $\integers$, $\rationals$, or $\reals$.

%\subsection{Cluster algebras of geometric type}\label{geom type subsec}
Let $I$ be an indexing set of arbitrary cardinality.
The usual definition of cluster algebras of geometric type, found in \cite[Section~2]{ca4}, is recovered from the definition below by taking $R=\integers$ and $I$ finite.

For each $i\in I$, let $u_i$ be a formal symbol.
Let $\Trop_R(u_i:i\in I)$ be the set of formal products of the form $\prod_{i\in I}u_i^{a_i}$ with each $a_i$ in $R$.
This is an abelian group with product
\[\prod_{i\in I}u_i^{a_i}\cdot\prod_{i\in I}u_i^{b_i}=\prod_{i\in I}u_i^{a_i+b_i}\]
and identity $\prod_{i\in I}u_i^0$.
We define an \newword{auxiliary addition} $\oplus$ in $\Trop_R(u_i:i\in I)$ by 
\[\prod_{i\in I}u_i^{a_i}\oplus\prod_{i\in I}u_i^{b_i}=\prod_{i\in I}u_i^{\min(a_i,b_i)}.\]
The triple $(\Trop_R(u_i:i\in I),\,\oplus,\,\cdot\,)$ is called a \newword{Tropical semifield} over $R$.
The symbols $u_i$ are called \newword{Tropical variables}.
We also consider $\Trop_R(u_i:i\in I)$ as a topological space.
It is the product, indexed by $I$, of copies of $R$.
We give $R$ the discrete topology and give $\Trop_R(u_i:i\in I)$ the product topology as a product of copies of the discrete set $R$.
Details about the product topology may be found, for example, in \cite[Chapter~3]{Kelley}.
See also \cite[Section~3]{universal}.  %\margin{This is section {univ sec}.}

An \newword{exchange matrix} $B$ is a \newword{skew-symmetrizable} $n\times n$ integer matrix. 
That means that there exist positive integers $d_1,\ldots,d_n$ such that $d_ib_{ij}=-d_jb_{ji}$ for all $i,j\in[n]$.  
In this paper, we will work with \newword{skew-symmetric} exchange matrices, meaning that $b_{ij}=-b_{ji}$ for all $i,j\in[n]$.  
(The notation $[n]$ stands for the indexing set $\set{1,\ldots,n}$ of $B$.)
An \newword{extended exchange matrix} is $B$, augmented by an additional collection of \newword{coefficient rows}, indexed by $I$.
Each coefficient row is a vector in $R^n$.
If $I$ is infinite, then $\tB$ is not a matrix in the usual sense, but it is a collection of entries $b_{ij}$ where $i$ is either in $[n]$ or in $I$ and $j$ is in $[n]$.

Now set $\P=\Trop_R(u_i:i\in I)$ and let $\K$ be a field of rational functions in $n$ independent variables with coefficients in $\rationals\P$.
Here $\rationals\P$ is the group ring of the group $\P$, ignoring the auxiliary addition $\oplus$.
A \newword{(labeled geometric) seed} of rank $n$ is a pair $(\x,\tB)$ such that $\x=(x_1,\ldots,x_n)$ is an $n$-tuple of algebraically independent elements of $\K$ and $\tB$ is an extended exchange matrix with coefficient rows indexed by $I$.
The $n$-tuple $\x$ is called a \newword{cluster} and the entries of $\x$ are called \newword{cluster variables}.
The seed determines elements $y_j=\prod_{i\in I}u_i^{b_{ij}}$ of $\P$ called \newword{coefficients}.

Given a seed $(\x,\tB)$ and an index $k\in[n]$, we define an new seed $\mu_k(\x,\tB)=(\x',\tB')$.
The new cluster $\x'=(x_1',\ldots,x_n')$ has $x_j'=x_j$ whenever $j\neq k$ and 
\begin{equation}\label{x mut}
x_k'=x_k^{-1}\left(\prod_{i=1}^nx_i^{[b_{ik}]_+}\prod_{i\in I}u_i^{[b_{ik}]_+}+\prod_{i=1}^nx_i^{[-b_{ik}]_+}\prod_{i\in I}u_i^{[-b_{ik}]_+}\right),
\end{equation}
where $[a]_+$ stands for $\max(a,0)$.
The new cluster variable $x_k'$ is a rational function in $\x$ with coefficients in $\integers\P$, so that $x_k'\in\K$.

The new extended exchange matrix $\tB'$ is given by
\begin{equation}\label{b mut}
b_{ij}'=\left\lbrace\!\!\begin{array}{ll}
-b_{ij}&\mbox{if }i=k\mbox{ or }j=k;\\
b_{ij}+\sgn(b_{kj})\,[b_{ik}b_{kj}]_+&\mbox{otherwise,}
\end{array}\right.
\end{equation}
where $\sgn(a)$ means $a/|a|$ if $a\neq 0$ or $0$ if $a=0$.
Since $\tB'$ only depends on $\tB$ and $k$, not on $\x$, we also write $\mu_k(\tB)$ for $\tB'$.
For any finite sequence $\k=k_q,\ldots,k_1$ of integers in~$[n]$, we use $\mu_\k$ for $\mu_{k_q}\circ\mu_{k_{q-1}}\circ\cdots\circ\mu_{k_1}$.

Now let $\T_n$ be the $n$-regular tree with an edge-labeling by integers $1$ through $n$, with each vertex incident to exactly one edge with each label.
An edge labeled $k$ between vertices $t$ and $t'$ is denoted by $t\dashname{k}t'$.
We fix a vertex $t_0$ and define a map $t\mapsto(\x_t,\tB_t)$ from vertices of $\T_n$ to seeds by requiring that $t_0\mapsto (\x,\tB)$ and that $(\x_{t'},\tB_{t'})=\mu_k(\x_t,\tB_t)$ whenever $t\dashname{k}t'$.
The map $t\mapsto(\x_t,\tB_t)$ is a \newword{cluster pattern} and the map $t\mapsto\tB_t$ is a \newword{$Y$-pattern}.
We write $(x_{1;t},\ldots,x_{n;t})$ and $(b_{ij}^t)$ for the cluster variables and matrix entries of $(\x_t,\tB_t)$ and write $y_{j;t}$ for the coefficient $\prod_{i\in I}u_i^{b_{ij}^t}$.
Associated to the cluster pattern is a \newword{cluster algebra $\A_R(\x,\tB)$ of geometric type}, namely the $\integers\P$-subalgebra of $\K$ generated by all cluster variables in the cluster pattern.
The cluster algebra is determined up to isomorphism by the extended exchange matrix $\tB$, so we write $\A_R(\tB)$ for $\A(\x,\tB)$.

For any permutation $\pi$ of $[n]$, applying $\pi$ simultaneously to the entries of $\x$, to the rows of $B$, to the columns of $\tB$ and to the edge-labels of $\T_n$ induces a canonical isomorphism of cluster patterns and of cluster algebras.  
In \cite[Definition~4.1]{ca4} \newword{unlabeled seeds} are defined as equivalence classes of seeds under these simultaneous permutations.
The \newword{exchange graph} is the graph on unlabeled seeds obtained from the cluster pattern by identifying vertices that have the same \emph{unlabeled seed}.

Let $\P=\Trop_R(u_i:i\in I)$ and $\P'=\Trop_R(v_k:k\in K)$ be tropical semifields over the same underlying ring $R$.
Let $(\x,\tB)$ and $(\x',\tB')$ be seeds of rank $n$, with coefficient rows indexed by $I$ and $K$ respectively.
A \newword{coefficient specialization} is a ring homomorphism $\varphi:\A_R(\x,\tB)\to\A_R(\x',\tB')$ such that
 \begin{enumerate}[(i)]
\item the exchange matrices $B$ and $B'$ coincide;
\item $\varphi(x_j)=x'_j$ for all $j\in[n]$;
\item the restriction of $\varphi$ to $\P$ is a continuous $R$-linear map to $\P'$ with $\varphi(y_{j;t})=y'_{j;t}$ and $\varphi(y_{j;t}\oplus 1)=y'_{j;t}\oplus 1$ for all $j\in[n]$ and $t\in\T_n$.
Continuity here refers to the product topology described above.
\end{enumerate}
The cluster algebra $\A=\A_R(\tB)$ has \newword{universal geometric coefficients over $R$} if the following condition holds:
Given a cluster algebra $\A'=\A_R(\tB')$ with the same initial exchange matrix $B$, there exists a unique coefficient specialization from $\A$ to $\A'$.
The extended exchange matrix $\tB$ is then said to be \newword{universal over $R$}.

Equation~\eqref{b mut} leads to the definition of \newword{mutation maps} for $B$.  
For each $k\in[n]$, we define a map $\eta^B_k:\reals^n\to\reals^n$ by setting $\eta_k^B(a_1,\ldots,a_n)=(a_1',\ldots,a_n')$ with
\begin{equation}\label{mutation map def}
a'_j=\left\lbrace\begin{array}{ll}
-a_k&\mbox{if }j=k;\\
a_j+a_kb_{kj}&\mbox{if $j\neq k$, $a_k\ge 0$ and $b_{kj}\ge 0$};\\
a_j-a_kb_{kj}&\mbox{if $j\neq k$, $a_k\le 0$ and $b_{kj}\le 0$};\\
a_j&\mbox{otherwise.}
\end{array}\right.
\end{equation}
Equivalently, if $\tB$ is an extended exchange matrix with exchange matrix $B$ and $(a_1,\ldots,a_n)$ is a coefficient row of $\tB$, then $\eta_k^B(\a)$ is the corresponding coefficient row of $\mu_k(\tB)$.
More generally, the  mutation maps associated to $B$ are defined as follows.
Let $\k=k_q,k_{q-1},\ldots,k_1$ be a sequence of integers in $[n]$, let $B_1=B$ and define $B_{i+1}=\mu_{k_i,\ldots,k_1}(B)$ for $i\in[q]$.
Then
\begin{equation}\label{eta def}
\eta_\k^B=\eta^B_{k_q,k_{q-1},\ldots,k_1}=\eta_{k_q}^{B_{q}}\circ\eta_{k_{q-1}}^{B_{q-1}}\circ\cdots\circ\eta_{k_1}^{B_1}.
\end{equation}
The map $\eta_\k^B$ is continuous, piecewise linear, and invertible, with inverse $\eta^{B_{q+1}}_{k_1,\ldots,k_q}$.
In particular, it is a homeomorphism from $\reals^n$ to $\reals^n$.
It also has the property that 
\begin{equation}\label{eta antipodal}
\eta_\k^B(\a)=-\eta_\k^{-B}(-\a).
\end{equation}

Given a finite set $S$, vectors $(\v_i:i\in S)$ in $\reals^n$, and elements $(c_i:i\in S)$ of $R$, write the formal expression $\sum_{i\in S}c_i\v_i$.
This expression is a \newword{$B$-coherent linear relation with coefficients in $R$} if the equalities
\begin{eqnarray}
\label{linear eta}
&&\sum_{i\in S}c_i\eta^B_\k(\v_i)=\mathbf{0},\mbox{ and}\\
\label{piecewise eta}
&&\sum_{i\in S}c_i\mathbf{min}(\eta^B_\k(\v_i),\mathbf{0})=\mathbf{0}
\end{eqnarray}
hold for every finite sequence $\k=k_q,\ldots,k_1$ of integers in~$[n]$.
The symbol $\mathbf{0}$ is the zero vector and $\mathbf{min}$ refers to componentwise minimum.
The equality \eqref{linear eta} for the empty sequence $\k$ implies that a $B$-coherent linear relation is also a linear relation in the usual sense. 

Let $I$ be some indexing set and let $(\b_i:i\in I)$ be vectors in $R^n$.
Then $(\b_i:i\in I)$ is an \newword{$R$-spanning set for $B$} if the following condition holds:
If $\a\in R^n$, then there exists a finite subset $S\subseteq I$ and elements $(c_i:i\in S)$ of $R$ such that $\a-\sum_{i\in S}c_i\b_i$ is a $B$-coherent linear relation.
Also, $(\b_i:i\in I)$ is an \newword{$R$-independent set for $B$} if the following condition holds: 
If $S$ is a finite subset of $I$ and $\sum_{i\in S}c_i\b_i$ is a $B$-coherent linear relation with coefficients in $R$, then $c_i=0$ for all $i\in S$.
Finally, $(\b_i:i\in I)$ is an \newword{$R$-basis} for $B$ if it is both an $R$-independent set for $B$ and an $R$-spanning set for $B$.

The notion of an $R$-basis for $B$ is analogous to the linear-algebraic notion of a basis, with $B$-coherent linear relations replacing the usual linear relations.
Our interest in $R$-bases for $B$ arises from the following theorem, which is \cite[Theorem~4.4]{universal}. %\margin{This is Thm. {basis univ}}

\begin{theorem}\label{basis univ}
Let $\tB$ be an extended exchange matrix with entries in $R$.
Then $\tB$ is universal over $R$ if and only if the coefficient rows of $\tB$ are an $R$-basis for $B$.
\end{theorem}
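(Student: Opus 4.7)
The plan is to identify each coefficient specialization $\varphi: \A_R(\tB) \to \A_R(\tB')$ with an $R$-linear expression of the coefficient rows of $\tB'$ in those of $\tB$, modulo $B$-coherent relations. Under this dictionary, existence of $\varphi$ for every $\tB'$ will amount to the $R$-spanning condition and uniqueness to $R$-independence.

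First I reduce to the restriction $\varphi|_\P$. Conditions (i) and (ii) fix $B$ and the initial cluster variables, and since every $x_{j;t}\in\A_R(\tB)$ is obtained from the initial cluster and the $y$-coefficients via iterated applications of \eqref{x mut}, the entire homomorphism $\varphi$ is determined by $\varphi|_\P$. With $R$ discrete and $\P,\P'$ carrying the product topology, a continuous $R$-linear map $\P\to\P'$ is specified by $\varphi(u_i)=\prod_{\ell\in K} v_\ell^{c_{\ell i}}$ with each set $\{c_{\ell i}:i\in I\}$ of finite support, since continuity forces each coordinate function to depend on only finitely many inputs.

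Next I translate condition (iii). Denote by $\b_i$ and $\b'_\ell$ the coefficient rows of $\tB$ and $\tB'$, and recall that the $i$th coefficient row of $\tB_t$ is $\eta^B_{\k_t}(\b_i)$ where $\k_t$ labels a path $t_0\to t$ in $\T_n$. Unpacking exponent by exponent, the equation $\varphi(y_{j;t})=y'_{j;t}$ becomes $\eta^B_{\k_t}(\b'_\ell)_j=\sum_i c_{\ell i}\,\eta^B_{\k_t}(\b_i)_j$, and $\varphi(y_{j;t}\oplus 1)=y'_{j;t}\oplus 1$ becomes the same identity with $\min(\cdot,0)$ applied componentwise. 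Quantifying over all $t$ and $j$, these are precisely \eqref{linear eta} and \eqref{piecewise eta}, asserting that $\b'_\ell-\sum_i c_{\ell i}\b_i$ is a $B$-coherent linear relation for each $\ell\in K$. Consequently, the existence of a specialization for every $\tB'$ is equivalent, row by row, to every $\b'\in R^n$ admitting an expression $\b'=\sum_i c_i\b_i$ modulo a $B$-coherent relation, namely the $R$-spanning condition; uniqueness, after noting that the set of $B$-coherent linear relations forms an $R$-module, is exactly the triviality of $B$-coherent relations among the $\b_i$ alone, namely $R$-independence.

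The main technical obstacle is the reduction step, namely verifying that a continuous $R$-linear $\varphi|_\P$ satisfying the matching conditions on each $y_{j;t}$ and $y_{j;t}\oplus 1$ genuinely extends to a ring homomorphism $\A_R(\tB)\to\A_R(\tB')$. This extension is built by induction on the tree $\T_n$, establishing $\varphi(x_{j;t})=x'_{j;t}$ one seed at a time; at each step the two halves of condition (iii) are needed to guarantee that the numerator of \eqref{x mut} is transported correctly under $\varphi$, while the converse direction (that every coefficient specialization arises this way) rests on $\varphi$ being determined on generators by $\varphi|_\P$ and the images of the initial cluster variables.
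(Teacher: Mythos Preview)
The paper does not give its own proof of this theorem; it merely quotes it as \cite[Theorem~4.4]{universal}. So there is no in-paper argument to compare against, only the proof in the cited reference.

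Your argument is correct and is the natural one (and, unsurprisingly, is essentially the argument in \cite{universal}). The dictionary you set up is exactly right: a continuous $R$-linear map $\P\to\P'$ is given by a matrix $(c_{\ell i})$ with finitely many nonzero entries in each row~$\ell$, and unwinding exponents converts the two halves of condition~(iii) into \eqref{linear eta} and \eqref{piecewise eta} for the formal expression $\b'_\ell-\sum_i c_{\ell i}\b_i$. Your observation that the $B$-coherent relations on a fixed set of vectors form an $R$-module is what makes the uniqueness-versus-independence step clean.

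One point in your last paragraph deserves to be stated more sharply. To see that a given $\varphi|_\P$ extends to a ring homomorphism $\A_R(\tB)\to\A_R(\tB')$, it is cleanest to first extend the induced ring map $\integers\P\to\integers\P'$ to a field homomorphism $\Phi:\K\to\K'$ by sending $x_j\mapsto x'_j$ (legitimate because the $x'_j$ are algebraically independent over $\rationals\P'$). The inductive step then shows $\Phi(x_{j;t})=x'_{j;t}$: rewriting \eqref{x mut} using the identities $\prod_{i\in I}u_i^{[b_{ik}]_+}=y_{k;t}/(y_{k;t}\oplus 1)$ and $\prod_{i\in I}u_i^{[-b_{ik}]_+}=1/(y_{k;t}\oplus 1)$ makes explicit why \emph{both} equalities $\varphi(y_{k;t})=y'_{k;t}$ and $\varphi(y_{k;t}\oplus 1)=y'_{k;t}\oplus 1$ are needed. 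With this in hand, $\Phi$ restricts to the desired map between the cluster algebras. Your sketch already says this, but phrasing it as ``extend to $\K$, then restrict'' avoids any worry about well-definedness on the subalgebra.
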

For a description of the unique coefficient specialization from $\A_R(\tB)$ to any other cluster algebra with the same $R$ and $B$, see \cite[Remark~4.5]{universal}. %\margin{This is Remark {explicit spec}} 

The following proposition \cite[Proposition~4.6]{universal} %\margin{This is prop. {basis exists}} 
is proved by the usual argument, using Zorn's Lemma, that any vector space has a (Hamel) basis.

%sinceTAMS:  Fixed following proposition (thanks, ref!) and comment immediately after.
\begin{prop}\label{basis exists}
Suppose the underlying ring $R$ is a field.
For any exchange matrix $B$, there exists an $R$-basis for $B$.
Given an $R$-spanning set $U$ for $B$, there exists an $R$-basis for $B$ contained in $U$. 
\end{prop}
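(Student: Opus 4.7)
The plan is to imitate the standard Zorn's Lemma argument that every vector space has a Hamel basis, with ``$R$-independent for $B$'' playing the role of linearly independent and ``$R$-spanning for $B$'' playing the role of spanning. The preliminary observation is that the collection of $B$-coherent linear relations with coefficients in $R$ is closed under addition and $R$-scalar multiplication, and so forms an $R$-subspace of the space of formal finite $R$-linear combinations of vectors in $\reals^n$. This is immediate from \eqref{linear eta} and \eqref{piecewise eta}, since each is an equation that is $R$-linear in the coefficients $c_i$.

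Given an $R$-spanning set $U$ for $B$, I would apply Zorn's Lemma to the poset of $R$-independent subsets of $U$, ordered by inclusion. Every chain has an upper bound, namely its union: any putative $B$-coherent relation on the union involves only finitely many vectors and so already lies in some member of the chain. Let $B^*\subseteq U$ be a maximal $R$-independent subset. To verify that $B^*$ is $R$-spanning, first note that for each $\b\in U\setminus B^*$, maximality yields a nontrivial $B$-coherent relation $c\b+\sum_{i\in S}c_i\b_i$ with $\b_i\in B^*$; the $R$-independence of $B^*$ forces $c\neq 0$, and because $R$ is a field I may rescale (using the preliminary observation) to obtain a $B$-coherent expression $\b-\sum_i(-c_i/c)\b_i$. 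Now given any $\a\in R^n$, the $R$-spanning property of $U$ produces a $B$-coherent expression $\a-\sum_{\b\in S_0}d_\b\b$ with $S_0\subseteq U$ finite; substituting the preceding expressions for each $\b\in S_0\setminus B^*$ and invoking the $R$-subspace property of $B$-coherent relations yields a $B$-coherent expression of $\a$ as an $R$-combination of elements of $B^*$, as required.

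For the first claim, I would apply the second to the set $U=\{\e_1,\ldots,\e_n\}$ of standard basis vectors, which is $R$-spanning because $\a-\sum_i a_i\e_i=\mathbf{0}$ is vacuously $B$-coherent for every $\a=(a_1,\ldots,a_n)\in R^n$. The only mildly nontrivial point in the whole argument is the preliminary observation that $B$-coherent relations form an $R$-vector space; everything else is the routine Zorn's Lemma maneuver, and the field hypothesis enters precisely and only at the rescaling step, where one divides the relation witnessing the failure of independence by its leading coefficient $c$.
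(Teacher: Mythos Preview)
Your Zorn's Lemma argument for the second assertion is correct and is exactly what the paper intends: the preliminary observation that $B$-coherent relations form an $R$-subspace, the chain condition via finiteness of supports, and the rescaling step (which is where the field hypothesis is used) are all sound.

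There is, however, a genuine error in your deduction of the first assertion. The set $\{\e_1,\ldots,\e_n\}$ is \emph{not} in general an $R$-spanning set for $B$. You claim that the formal expression $\a-\sum_i a_i\e_i$ is ``vacuously $B$-coherent'' because it represents the zero vector, but $B$-coherence is a condition on the \emph{formal} expression, not on the vector it represents. Condition~\eqref{linear eta} demands $\eta^B_\k(\a)-\sum_i a_i\,\eta^B_\k(\e_i)=\mathbf{0}$ for every sequence $\k$, and since the mutation maps $\eta^B_\k$ are only piecewise linear, this typically fails. For instance, with $n=2$, $B=\begin{bsmallmatrix*}[r]0&1\\-1&0\end{bsmallmatrix*}$, and $\a=(-1,1)=-\e_1+\e_2$, one computes $\eta^B_1(\a)=(1,1)$ but $-\eta^B_1(\e_1)+\eta^B_1(\e_2)=-(-1,1)+(0,1)=(1,0)$, so \eqref{linear eta} fails already for $\k=1$.

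The fix is immediate: take $U=R^n$. For any $\a\in R^n$, the formal expression $\a-1\cdot\a$ (with index set a singleton, vector $\a$, coefficient $1$) is trivially $B$-coherent, since $\eta^B_\k(\a)-\eta^B_\k(\a)=\mathbf{0}$ and likewise for \eqref{piecewise eta}. Thus $R^n$ is an $R$-spanning set, and your second assertion then yields an $R$-basis contained in it.
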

In light of Theorem~\ref{basis univ} and Proposition~\ref{basis exists}, if $R$ is a field, then universal geometric coefficients over $R$ exist for any $B$.
We have no general proof that $\integers$-bases exist, and indeed the second statement of Proposition~\ref{basis exists} can fail for $R=\integers$.
(See \cite[Remark~4.9]{universal}.)
However, $\integers$-bases (and thus universal geometric coefficients over $\integers$) are constructed for various exchange matrices $B$ in \cite{universal,unitorus} and also later in this paper (Corollary~\ref{poly grow univ}).

%sinceTAMS:  rephrased following sentence and then removed 
%Proposition {one positive or one negative}
The following proposition is \cite[Proposition~4.11]{universal}. %\margin{This is Prop {no zero no piecewise}} 
\begin{prop}\label{no zero no piecewise}
Suppose $B$ has no row consisting entirely of zeros.
Then the formal expression $\sum_{i\in S}c_i\v_i$ is a $B$-coherent linear relation if and only if condition~\eqref{linear eta} holds for all sequences $\k$.
\end{prop}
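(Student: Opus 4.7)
The direction ``$\Rightarrow$'' is immediate from the definition of a $B$-coherent linear relation, which includes \eqref{linear eta} among its defining conditions. The content is the converse: assuming \eqref{linear eta} holds for every sequence $\k$, I need to show \eqref{piecewise eta} also holds for every $\k$. The plan is to exploit the no-zero-row hypothesis by prepending a single mutation index $j$ to $\k$ and comparing \eqref{linear eta} at $\k$ with \eqref{linear eta} at the longer sequence $(j,\k)$.

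First I would verify that the no-zero-row property is preserved under mutation. Inspecting \eqref{b mut}: if $b_{ik}\neq 0$, then $b_{ik}'=-b_{ik}\neq 0$; if $b_{ik}=0$, then the correction term in \eqref{b mut} vanishes and row $i$ is unchanged except possibly at column $k$, so any preexisting nonzero entry of row $i$ survives. Hence $B_\k=\mu_\k(B)$ has no zero row for every $\k$, and since skew-symmetrizability forces the diagonal to vanish, for each $j\in[n]$ there exists some $m\neq j$ with $b^{B_\k}_{jm}\neq 0$.

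The key identity comes from a direct case check of \eqref{mutation map def}: for $m\neq j$ and $\a\in\reals^n$,
\[ (\eta_j^{B_\k}(\a))_m = a_m + [a_j]_+\,[b^{B_\k}_{jm}]_+ - [-a_j]_+\,[-b^{B_\k}_{jm}]_+. \]
Setting $\a=\eta_\k^B(\v_i)$, multiplying by $c_i$, and summing over $i$, both $\sum_i c_i(\eta^B_{j,\k}(\v_i))_m$ and $\sum_i c_i(\eta_\k^B(\v_i))_m$ vanish by \eqref{linear eta} at $(j,\k)$ and $\k$ respectively. Since $[\pm b^{B_\k}_{jm}]_+$ do not depend on $i$, what remains after pulling them out is
\[ [b^{B_\k}_{jm}]_+ \sum_i c_i\,[(\eta_\k^B(\v_i))_j]_+ = [-b^{B_\k}_{jm}]_+ \sum_i c_i\,[-(\eta_\k^B(\v_i))_j]_+. \]
Because $b^{B_\k}_{jm}\neq 0$, exactly one of the two prefactors is positive, so at least one of the two sums vanishes. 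Combined with \eqref{linear eta} at $\k$ in coordinate $j$, which reads $\sum_i c_i\bigl([(\eta_\k^B(\v_i))_j]_+ - [-(\eta_\k^B(\v_i))_j]_+\bigr)=0$, the other sum vanishes too. Therefore $\sum_i c_i\min\!\bigl((\eta_\k^B(\v_i))_j,\,0\bigr) = -\sum_i c_i\,[-(\eta_\k^B(\v_i))_j]_+ = 0$, which is the $j$-th coordinate of \eqref{piecewise eta} at $\k$. Since $j$ and $\k$ are arbitrary, \eqref{piecewise eta} follows in full.

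The most delicate point is the verification that the no-zero-row property is preserved under mutation, since it is what supplies the crucial index $m$ on which the whole comparison hinges. Everything else is routine manipulation of the identities $a=[a]_+-[-a]_+$ and $\min(a,0)=-[-a]_+$ together with the verification of the compact mutation-map identity above, so I do not expect a substantive obstacle.
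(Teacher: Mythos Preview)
The paper does not prove this proposition; it merely quotes it as \cite[Proposition~4.11]{universal}. Your argument is correct and is essentially the natural (and presumably the original) proof: the identity $(\eta_j(\a))_m = a_m + [a_j]_+[b_{jm}]_+ - [-a_j]_+[-b_{jm}]_+$ is exactly what separates the positive and negative parts of the $j$-th coordinate, and the no-zero-row hypothesis (preserved under mutation, as you verify) provides the index $m$ that makes one of the prefactors nonzero. The deduction that both $\sum_i c_i[(\eta_\k^B(\v_i))_j]_+$ and $\sum_i c_i[-(\eta_\k^B(\v_i))_j]_+$ vanish then follows cleanly, yielding \eqref{piecewise eta}.
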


A subset of $\reals^n$ that is closed under addition and positive scaling is a \newword{convex cone}.
A \newword{simplicial} cone is the nonnegative span of a finite set of linearly independent vectors.
A \newword{rational cone} is the nonnegative span of a finite set of rational vectors.
A \newword{face} of a convex cone $C$ is a convex subset $F$ such that any line segment $L\subseteq C$ whose interior intersects $F$ has $L\subseteq F$.
Any face of a convex cone is itself a convex cone.
A face of a closed convex cone is closed as well.
A collection $\F$ of closed convex cones is a \newword{fan} if it has the following two properties:
If $C\in\F$ and $F$ is a face of $C$, then $F\in\F$; and the intersection of any two cones in $\F$ is a face of each of the two.
A \newword{complete fan} is a fan $\F$ such that the union of the cones in $\F$ is the entire ambient space.
A fan is \newword{simplicial} if all of its cones are simplicial.
A fan is \newword{rational} if all of its cones are rational.
If $\F_1$ and $\F_2$ are fans and every cone in $\F_2$ is a union of cones in $\F_1$, then $\F_1$ \newword{refines}~$\F_2$.

As before, $\sgn(a)$ means $a/|a|$ if $a\neq 0$ or $0$ if $a=0$.
Given $\a=(a_1,\ldots,a_n)$ in $\reals^n$, we write $\mathbf{sgn}(\a)$ for $(\sgn(a_1),\ldots,\sgn(a_n))$.
Given $\a_1,\a_2\in\reals^n$, define $\a_1\equiv^B\a_2$ to mean that $\mathbf{sgn}(\eta^B_\k(\a_1))=\mathbf{sgn}(\eta^B_\k(\a_2))$ for every sequence $\k$ of integers in~$[n]$.
A $\equiv^B$-equivalence class is a \newword{$B$-class}.
The closure of a $B$-class is a \newword{$B$-cone}.
Each $B$-class is a convex cone and each $B$-cone is a closed convex cone \cite[Proposition~5.4]{universal}.  %\margin{This is prop. {convex}.}
The \newword{mutation fan} for $B$ is the collection $\F_B$ consisting of all $B$-cones and all faces of $B$-cones.
(Faces of $B$-cones can fail to be $B$-cones.  See \cite[Remark~5.28]{universal}.)
The following results are \cite[Proposition~5.3]{universal}, %\margin{This is prop. {linear}.} 
\cite[Theorem~5.13]{universal}, and  %\margin{This is Theorem {fan}.}
\cite[Proposition~5.26]{universal}. % \margin{This is prop. {int of B-cones}.} 

\begin{prop}\label{linear}
Every mutation map $\eta_\k^B$ is linear on every $B$-cone.
\end{prop}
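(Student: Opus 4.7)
The plan is to reduce linearity of an arbitrary composition $\eta_\k^B$ to linearity of a single mutation map $\eta_k^B$, then use the definition of a $B$-class to get constant sign patterns along the way, and finally pass from $B$-classes to $B$-cones by continuity.

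First, I would stare at the piecewise definition~\eqref{mutation map def} of $\eta_k^B$ and observe that the formula is a single linear function on the closed half-space $\{a_k \ge 0\}$ and a (possibly different) single linear function on $\{a_k \le 0\}$: the coordinate $a_k'=-a_k$ is linear, and for $j\ne k$ the rule $a_j' = a_j + [b_{kj}]_+ a_k$ on $a_k \ge 0$ (respectively $a_j' = a_j - [-b_{kj}]_+ a_k$ on $a_k \le 0$) is linear. So $\eta_k^B$ is linear on any set all of whose points share a common sign of the $k$\th coordinate.

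Next, I would argue by induction on the length $q$ of $\k = k_q,\ldots,k_1$ that $\eta_\k^B$ is linear on every $B$-class $X$. Take a $B$-class $X$ and any $\a_1,\a_2 \in X$. By the very definition of $\equiv^B$, for every prefix $k_i,\ldots,k_1$ of $\k$, the vectors $\eta_{k_i,\ldots,k_1}^B(\a_1)$ and $\eta_{k_i,\ldots,k_1}^B(\a_2)$ have the same sign pattern, and in particular the same sign in coordinate $k_{i+1}$. Using the observation from the previous paragraph at each stage (with exchange matrix $B_{i+1}$), the map $\eta_{k_{i+1}}^{B_{i+1}}$ acts by a single linear function on the image of $X$ under $\eta_{k_i,\ldots,k_1}^B$. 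Composing the linear functions chosen at each step shows that the restriction of $\eta_\k^B$ to $X$ agrees with an honest linear map $L_{\k,X}:\reals^n\to\reals^n$.

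Finally, to pass from $B$-classes to $B$-cones, I would invoke the fact cited just above the proposition that $\eta_\k^B$ is continuous. Since the $B$-cone is by definition the closure of the $B$-class $X$, and $\eta_\k^B$ agrees with the linear (hence continuous) map $L_{\k,X}$ on the dense subset $X$ of its closure, the equality $\eta_\k^B = L_{\k,X}$ persists on the whole $B$-cone. The only potential obstacle is checking that the inductive step really produces a \emph{single} linear function on the image, rather than a piecewise linear one; but this is precisely guaranteed by the constancy of all sign patterns built into the definition of a $B$-class, so no serious difficulty is expected.
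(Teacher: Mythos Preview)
Your argument is correct, and it is the natural one: observe that a single $\eta_k^B$ is given by one linear formula on each half-space $\{a_k\ge 0\}$ and $\{a_k\le 0\}$, use the defining property of a $B$-class to guarantee that at every stage of the composition the image sits inside a single such half-space, and then close up by continuity. (A minor slip: on $\{a_k\le 0\}$ the correct formula is $a_j'=a_j+[-b_{kj}]_+\,a_k$, equivalently $a_j-a_k b_{kj}$ when $b_{kj}\le 0$, not $a_j-[-b_{kj}]_+\,a_k$; this does not affect the argument, since all you need is that the formula is linear on each half-space.)

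Note, however, that the present paper does not actually prove this proposition. It is quoted without proof as \cite[Proposition~5.3]{universal}; see the sentence immediately preceding the statement. So there is nothing in this paper to compare your proof against. Your write-up is essentially what one would expect the proof in~\cite{universal} to look like.
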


\begin{theorem}\label{fan}
The collection $\F_B$ is a complete fan.
\end{theorem}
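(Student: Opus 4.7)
The plan is to verify the three axioms of a complete fan. Closure under passing to faces is built into the definition of $\F_B$, since it contains every face of every $B$-cone and a face of a face of a cone is itself a face of the cone. Completeness is immediate: the relation $\equiv^B$ partitions $\reals^n$ into $B$-classes, so every point lies in the closure of its own class, which is a $B$-cone. Thus the entire content of the theorem is the intersection axiom: for any two cones $C_1,C_2\in\F_B$, the intersection $C_1\cap C_2$ is a face of each. Since every cone in $\F_B$ is a face of some $B$-cone and faces of faces are faces, it suffices to treat the case where $C_1$ and $C_2$ are themselves $B$-cones.

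The strategy is to exploit Proposition~\ref{linear}: every mutation map $\eta_\k^B$ is linear on each $B$-cone. Let $X_i$ be the $B$-class whose closure is $C_i$, and fix representatives $\a_i\in X_i$. If $X_1\neq X_2$, then by the very definition of $\equiv^B$ there exist sequences $\k$ and indices $j$ for which the $j$-th coordinates of $\mathbf{sgn}(\eta_\k^B(\a_1))$ and $\mathbf{sgn}(\eta_\k^B(\a_2))$ are strictly opposite in sign. By continuity of $\eta_\k^B$, any $\a\in C_1\cap C_2$ is a limit of points from both $X_1$ and $X_2$ and hence must satisfy $(\eta_\k^B(\a))_j=0$ for every such disagreeing pair $(\k,j)$. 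Let $F$ be the subset of $C_1$ cut out by all of these linear equations. By linearity of each $\eta_\k^B$ on $C_1$, each equation intersects $C_1$ in a hyperplane section that is a face of $C_1$; since an arbitrary intersection of faces of a convex cone is a face, $F$ is a face of $C_1$, and by symmetry also of $C_2$. So $C_1\cap C_2\subseteq F$.

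The hard step is the reverse inclusion $F\subseteq C_1\cap C_2$, which is what actually identifies $C_1\cap C_2$ with a face of each $C_i$. For $\a\in F$ we have $\a\in C_1$ automatically, so it remains to show $\a\in C_2$. The idea is that for every sequence $\k$, the sign vector $\mathbf{sgn}(\eta_\k^B(\a))$ agrees with $\mathbf{sgn}(\eta_\k^B(\a_1))$ in every coordinate not forced to zero by membership in $F$, and by construction these are exactly the coordinates on which $X_1$ and $X_2$ already agree. One then slides $\a$ toward $X_2$ along a segment from $\a$ to $\a_2$, verifying via the sign-vector characterization of $\equiv^B$ that points close to $\a$ on this segment lie in $X_2$; consequently $\a$ lies in the closure of $X_2$, namely $C_2$.

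The main obstacle is managing the \emph{a priori} infinite family of constraints indexed by all sequences $\k$ and executing the perturbation argument uniformly. Because $C_1$ is a closed convex cone in $\reals^n$, each genuinely new hyperplane equation strictly lowers the dimension of the cut-out face, so at most $n$ independent linear conditions can be active in defining $F$. This dimensional collapse reduces the analysis to a finite subcollection of sequences $\k$, on which the piecewise-linear maps $\eta_\k^B$ are simultaneously linear on both $C_1$ and $C_2$, making the sign-vector calculation along the perturbation segment a finite check and completing the verification that $C_1\cap C_2$ is a face of each.
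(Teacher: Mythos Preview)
The paper does not prove this theorem; it simply quotes it as \cite[Theorem~5.13]{universal}. So there is no proof here to compare against, and your attempt must stand on its own.

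Your overall architecture is sound: closure under faces and completeness are immediate, the reduction to the case of two $B$-cones is valid, and defining the candidate face $F\subseteq C_1$ via the vanishing loci $(\eta_\k^B(\cdot))_j=0$ over all disagreeing pairs $(\k,j)$ is a natural idea. Each such locus is indeed a face of $C_1$, since $C_1=\overline{X_1}$ lies weakly on one side of the corresponding hyperplane by continuity, and an intersection of faces is a face; so $C_1\cap C_2\subseteq F$ and $F$ is a face of $C_1$.

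However, the reverse inclusion has genuine gaps. First, the assertion that $F$ is a face of $C_2$ ``by symmetry'' is premature: $F$ was defined as a subset of $C_1$, and you have not yet shown $F\subseteq C_2$---that is exactly the content you are trying to establish. (By symmetry you get a face $F'\subseteq C_2$, a priori different from $F$.) Second, and more seriously, the perturbation argument does not go through as written. To conclude that $a+\epsilon(\a_2-a)\in X_2$ you must verify the sign condition $\mathbf{sgn}(\eta_\k^B(\cdot))=\mathbf{sgn}(\eta_\k^B(\a_2))$ for \emph{every} sequence $\k$, not just a finite list. Your dimension count shows that $F$ is carved out of $C_1$ by finitely many of the equations, but this does not reduce the membership test for $X_2$ to finitely many $\k$. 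Moreover, you implicitly use linearity of $\eta_\k^B$ along the segment $[a,\a_2]$, but Proposition~\ref{linear} only guarantees linearity on a single $B$-cone; the segment from $a\in C_1$ to $\a_2\in C_2$ is not known to lie in any one $B$-cone (indeed, $a\in C_2$ is precisely what you are trying to prove), so the interpolation $\eta_\k^B(a_t)=(1-t)\eta_\k^B(a)+t\,\eta_\k^B(\a_2)$ is unjustified. Without a uniform control over all $\k$ simultaneously, the argument does not close.
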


\begin{prop}\label{int of B-cones}
An arbitrary intersection of $B$-cones is a $B$-cone.
\end{prop}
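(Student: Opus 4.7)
Let $\{C_\alpha\}_{\alpha \in A}$ be an arbitrary family of $B$-cones, and set $D := \bigcap_\alpha C_\alpha$. The plan is to exhibit a single $B$-class $B^*$ whose closure equals $D$. Each $C_\alpha$ is closed and convex, so $D$ is a closed convex cone; if $D = \{\mathbf{0}\}$ the claim holds, since $\{\mathbf{0}\}$ is the $B$-cone of the $B$-class $\{\mathbf{0}\}$.

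The technical heart of the proof is a sign-weakening description of each $B$-cone. Writing $B_\alpha$ for the $B$-class with $\overline{B_\alpha} = C_\alpha$ and $\epsilon^\alpha_\k := \mathbf{sgn}(\eta^B_\k(\a))$ for any (hence every) $\a \in B_\alpha$, I would prove
\[
C_\alpha \;=\; \bigl\{\a \in \reals^n : \mathbf{sgn}(\eta^B_\k(\a))_i \cdot \epsilon^\alpha_{\k,i} \ge 0 \text{ for every }\k\text{ and every }i\bigr\}.
\]
The inclusion $\subseteq$ is immediate from continuity of each $\eta^B_\k$ applied to a sequence in $B_\alpha$ converging to a point of $C_\alpha$: no coordinate of $\eta^B_\k$ can strictly flip sign in the limit. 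The reverse inclusion is the main obstacle, and I would establish it by a perturbation argument: given $\a$ on the right-hand side, pick $\a_0 \in B_\alpha$ and consider $\a + t\a_0$ for small $t > 0$. Using Proposition~\ref{linear} to control the image of each $\eta^B_\k$ under such small perturbations, together with the hypothesis that $\a$ has no sign strictly opposite to that of $B_\alpha$, one checks that $\mathbf{sgn}(\eta^B_\k(\a + t\a_0)) = \epsilon^\alpha_\k$ for every $\k$ and every sufficiently small $t > 0$. Thus $\a + t\a_0 \in B_\alpha$, and letting $t \to 0^+$ gives $\a \in \overline{B_\alpha} = C_\alpha$.

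Given the sign-weakening description, the intersection $D$ is immediately expressed as a componentwise meet of sign vectors: $D = \{\a : \mathbf{sgn}(\eta^B_\k(\a))_i \cdot \tau_{\k,i} \ge 0 \text{ for all } \k, i\}$, where $\tau_{\k,i}$ equals the common value of $\epsilon^\alpha_{\k,i}$ across $\alpha$ when this value is common and nonzero, and $\tau_{\k,i} = 0$ otherwise. Taking $\a^*$ sufficiently generic in the relative interior of $D$ and invoking Proposition~\ref{linear} once more, one verifies $\mathbf{sgn}(\eta^B_\k(\a^*)) = \tau_\k$ for every $\k$. The $B$-class $B^*$ of $\a^*$ therefore has sign vectors $(\tau_\k)_\k$, and applying the sign-weakening description to $B^*$ in place of $B_\alpha$ identifies $\overline{B^*}$ with $D$. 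The heart of the argument is the sign-weakening characterization in the second paragraph; once that is in place, the remainder is bookkeeping with the coordinatewise partial order in which $0$ is below $\pm 1$.
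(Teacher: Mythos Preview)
The paper does not prove this proposition here; it is quoted without proof from \cite[Proposition~5.26]{universal}. So there is no in-paper argument to compare against, and your attempt must be judged on its own.

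Your sign-weakening characterization of $C_\alpha$ is false as written. The condition $\mathbf{sgn}(\eta^B_\k(\a))_i \cdot \epsilon^\alpha_{\k,i}\ge 0$ imposes no constraint whatsoever at coordinates where $\epsilon^\alpha_{\k,i}=0$, yet every point of $C_\alpha=\overline{B_\alpha}$ must have $\eta^B_\k(\cdot)_i=0$ there (by continuity of $\eta^B_\k$, since every point of $B_\alpha$ does). The extreme case makes the error transparent: take $B_\alpha=\{\mathbf{0}\}$. Then every $\epsilon^\alpha_\k$ is the zero vector, your right-hand side is all of $\reals^n$, and the claimed equality reads $\{\mathbf{0}\}=\reals^n$. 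The perturbation step fails at exactly the same place: when $\epsilon^\alpha_{\k,i}=0$ we have $\eta^B_\k(\a_0)_i=0$, so $\eta^B_\k(\a+t\a_0)_i$ inherits whatever nonzero value $\eta^B_\k(\a)_i$ carries (and your hypothesis permits one), whence $\a+t\a_0\notin B_\alpha$ for any $t$. The correct condition is $\mathbf{sgn}(\eta^B_\k(\a))\preceq\epsilon^\alpha_\k$ in the componentwise order with $0$ below both $+1$ and $-1$.

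Two further points require justification even after that fix. First, invoking Proposition~\ref{linear} for $\a+t\a_0$ presupposes that $\a$ and $\a_0$ lie in a common $B$-cone; you have not shown this, and it is exactly Proposition~\ref{contained Bcone} (via sign-coherence of $\{\a,\a_0\}$ under every $\eta^B_\k$) that supplies it. Second, the claim that a generic $\a^*\in\relint(D)$ has $\mathbf{sgn}(\eta^B_\k(\a^*))=\tau_\k$ for \emph{every} $\k$ is asserted but not proved. One clean route: since $D$ lies in some $B$-cone, each $\eta^B_\k$ is linear on $D$ and carries $\relint(D)$ to $\relint(\eta^B_\k(D))$; the latter, being the relative interior of a convex cone inside a closed coordinate orthant, consists of vectors with a single common sign pattern. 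Hence $\relint(D)$ lies in one $B$-class $B^*$, giving $D=\overline{\relint(D)}\subseteq\overline{B^*}$, and the reverse inclusion follows from the corrected characterization. Your outline is on the right track, but as it stands the central lemma is wrong and the endgame is a sketch.
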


A collection of vectors in $\reals^n$ is \newword{sign-coherent} if their $k\th$ coordinates weakly agree in sign, for each $k\in[n]$.
The following proposition is \cite[Proposition~5.30]{universal}. %\margin{This is prop. {contained Bcone}.} 

\begin{prop}\label{contained Bcone}
A set $C\subseteq\reals^n$ is contained in some $B$-cone if and only if the set $\eta_\k^B(C)$ is sign-coherent for every sequence $\k$ of indices in $[n]$.
\end{prop}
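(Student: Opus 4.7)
The forward direction is short. Suppose $C\subseteq \overline X$ for some $B$-class $X$. Fix a sequence $\k$; all points of $X$ share a common sign vector $\mathbf s\in\{-1,0,+1\}^n$ under $\eta_\k^B$. Because $\eta_\k^B$ is continuous and every point of $\overline X$ is a limit of points of $X$, each coordinate of $\eta_\k^B(\a)$ for $\a\in\overline X$ is a limit of real numbers of sign $s_j$ and so has sign $s_j$ or is zero. Hence $\eta_\k^B(\overline X)$, and therefore $\eta_\k^B(C)$, is sign-coherent.

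For the reverse direction, assume $\eta_\k^B(C)$ is sign-coherent for every sequence $\k$. For each $\k$ and each $j\in[n]$, let $(s_\k)_j\in\{-1,0,+1\}$ be the unique strict sign that occurs among the $j$-th coordinates of $\eta_\k^B(\a)$ as $\a$ ranges over $C$, or $0$ if those coordinates all vanish; this is well-defined by sign-coherence. The plan is (i) to exhibit a point $\a^*\in\reals^n$ with $\mathbf{sgn}(\eta_\k^B(\a^*))=\mathbf s_\k$ for every $\k$, and then (ii) to let $X$ be the $B$-class of $\a^*$ and verify that $C\subseteq\overline X$.

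For step (ii), given $\a\in C$ I would parametrize the segment $\a(\lambda)=(1-\lambda)\a^*+\lambda\a$ for $\lambda\in[0,1)$. Near $\lambda=0$ this segment lies in the $B$-cone $\overline X$ of $\a^*$, on which $\eta_\k^B$ is linear by Proposition~\ref{linear}. Since $\eta_\k^B(\a^*)$ has strict sign $\mathbf s_\k$ while $\eta_\k^B(\a)$ has only weakly that sign, linearity along the segment keeps every coordinate of $\eta_\k^B(\a(\lambda))$ strictly of sign $(s_\k)_j$ (when that sign is nonzero) for $\lambda$ slightly positive. Extending this across the full open segment using continuity of $\eta_\k^B$ and the fan structure of $\F_B$ (Theorem~\ref{fan}) should place $\a(\lambda)\in X$ for all $\lambda\in[0,1)$, whence $\a\in\overline X$.

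I expect the main obstacle to be step (i), since infinitely many strict sign conditions must hold at a single point. The approach is perturbative: for an arbitrary $\a_0\in C$, any failure $(\eta_\k^B(\a_0))_j=0$ with $(s_\k)_j\ne 0$ means $\a_0$ lies on the boundary of the $B$-cone realizing that sign, and a small convex combination of $\a_0$ with a second point $\a'\in C$ achieving $(\eta_\k^B(\a'))_j=(s_\k)_j$ pushes the image off that hyperplane. The essential difficulty is to orchestrate such perturbations across all $\k$ simultaneously; the natural tool is Proposition~\ref{int of B-cones}, which allows one to take arbitrary intersections of $B$-cones and remain within $\F_B$. Concretely, for each $\a\in C$ one may choose a $B$-cone $\overline{X_\a}\ni\a$ (guaranteed by completeness of $\F_B$), and arrange matters so that the relevant intersection of such cones --- or dually, a minimal $B$-cone whose closed orthant preimages under each $\eta_\k^B$ envelope all of $C$ --- is the desired $\overline X$.
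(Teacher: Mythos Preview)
The paper does not prove this proposition; it is quoted as \cite[Proposition~5.30]{universal}.  So there is no in-paper argument to compare against, and I will simply assess your proposal.

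Your forward direction is correct.  The reverse direction, however, has a real gap.  In step~(ii) you write that ``near $\lambda=0$ this segment lies in the $B$-cone $\overline X$ of $\a^*$, on which $\eta_\k^B$ is linear by Proposition~\ref{linear}.''  But you have not shown that any part of the open segment from $\a^*$ to $\a$ lies in $\overline X$; that is essentially the conclusion you are after.  If $\a\notin\overline X$ and $\overline X$ is lower-dimensional, the segment can leave $\overline X$ immediately.  So invoking linearity on $\overline X$ is circular.  Step~(i) is, as you acknowledge, only a sketch; orchestrating infinitely many perturbations simultaneously is not addressed.

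The missing ingredient is a linearity statement that does not presuppose containment in a $B$-cone.  One can show, by induction on the length of $\k$, that if $\eta_\k^B(C)$ is sign-coherent for every $\k$, then each $\eta_\k^B$ is linear on the convex hull (indeed the closed nonnegative span $\tilde C$) of $C$: sign-coherence of $\eta_{\k'}^B(C)$ forces $\eta_{\k'}^B(\tilde C)$ into a single half-space $\{a_k\ge 0\}$ or $\{a_k\le 0\}$, which is a domain of linearity for the next $\eta_k$.  With this in hand, both of your steps collapse.  Take $\a^*$ to be \emph{any} point of the relative interior of $\tilde C$; for each $\a\in\tilde C$ write $\a^*=t\a+(1-t)\a''$ with $\a''\in\tilde C$ and $t\in(0,1)$, apply $\eta_\k^B$ linearly, and use sign-coherence of $\eta_\k^B(\tilde C)$ to see that every nonzero coordinate of $\eta_\k^B(\a)$ forces the same strict sign in $\eta_\k^B(\a^*)$.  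Thus any two relative-interior points of $\tilde C$ are $\equiv^B$-equivalent, the relative interior lies in a single $B$-class $X$, and $C\subseteq\tilde C=\overline{\relint\tilde C}\subseteq\overline X$.  No perturbation scheme is needed.
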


An $R$-basis $(\b_i:i\in I)$ for $B$ is \newword{positive} if, for any vector $\a\in R^n$, there is a $B$-coherent linear relation $\a-\sum_{i\in S}c_i\b_i$ with each $c_i$ nonnegative.
A positive $R$-basis seems to be the natural choice for ``the right'' $R$-basis for $B$, and this intuition is borne out by the following results, which are \cite[Proposition~6.2]{universal}  %\margin{This is Prop. {positive unique}.} 
and \cite[Corollary~6.13]{universal}. %\margin{This is Cor. {pos reals FB}.}
Note the appearance of $\reals$ in Proposition~\ref{pos reals FB}, rather than $R$.

\begin{prop}\label{positive unique}
For any fixed $R$, there is at most one positive $R$-basis for $B$, up to scaling of each basis element by a positive unit in $R$.
\end{prop}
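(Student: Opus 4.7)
The plan is to show that if $(\b_i : i \in I)$ and $(\b'_j : j \in J)$ are two positive $R$-bases for $B$, then there exists a bijection $\sigma : I \to J$ such that $\b'_{\sigma(i)}$ is a positive unit multiple of $\b_i$. The argument is a change-of-basis computation adapted to the framework of $B$-coherent linear relations, analogous to the standard proof that any two bases of a vector space are related by an invertible matrix, but exploiting positivity to force that matrix to be diagonal.

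First, I would use the positivity of $(\b'_j)$ to obtain, for each $i\in I$, nonnegative coefficients $d_{ij}\in R$ (finitely many nonzero) with $\b_i-\sum_j d_{ij}\b'_j$ a $B$-coherent linear relation; symmetrically, by positivity of $(\b_i)$, pick nonnegative $e_{jk}\in R$ with $\b'_j-\sum_k e_{jk}\b_k$ a $B$-coherent linear relation. A direct inspection of \eqref{linear eta} and \eqref{piecewise eta} shows that $R$-linear combinations of $B$-coherent linear relations are again $B$-coherent, so substituting the second family into the first produces a $B$-coherent linear relation of the form $\b_i-\sum_k M_{ik}\b_k$ with $M_{ik}=\sum_j d_{ij}e_{jk}$. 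Independence of $(\b_i)$ then forces $M_{ik}=\delta_{ik}$. Combined with nonnegativity, the equation $M_{ik}=0$ for $k\ne i$ forces $e_{jk}=0$ for every $k\ne i$ whenever $d_{ij}>0$; evaluating the relation $\b'_j-\sum_k e_{jk}\b_k$ at the empty sequence $\k$ (that is, applying \eqref{linear eta} with $\k$ empty) then yields $\b'_j=e_{ji}\b_i$ with $e_{ji}>0$.

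The main obstacle is to rule out that two distinct $j$'s satisfy $d_{ij}>0$ for a single $i$. If $j_1\ne j_2$ both did, both $\b'_{j_1}$ and $\b'_{j_2}$ would be positive scalar multiples of $\b_i$, and I would verify directly that the formal expression $e_{j_2 i}\b'_{j_1}-e_{j_1 i}\b'_{j_2}$ is a $B$-coherent linear relation, contradicting the independence of $(\b'_j)$. This verification relies on the positive-homogeneity of each mutation map $\eta^B_\k$, which is visible from \eqref{mutation map def} since the piecewise cases depend only on signs, together with the identity $\mathbf{min}(c\v,\mathbf{0})=c\,\mathbf{min}(\v,\mathbf{0})$ for $c\ge 0$. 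Hence each $i$ singles out a unique $\sigma(i)\in J$ with $\b'_{\sigma(i)}=e_{\sigma(i),i}\b_i$, and the equation $M_{ii}=1$ becomes $d_{i,\sigma(i)}\,e_{\sigma(i),i}=1$, making $e_{\sigma(i),i}$ a positive unit of $R$. Swapping the roles of the two bases shows that $\sigma$ is a bijection, completing the proof.
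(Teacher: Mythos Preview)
The paper does not actually prove this proposition here; it is quoted from \cite[Proposition~6.2]{universal} without argument.  Your proof is correct and self-contained.  The key observations you use --- that $B$-coherent linear relations are closed under $R$-linear combinations (since \eqref{linear eta} and \eqref{piecewise eta} are linear in the coefficients), that each $\eta^B_\k$ is positively homogeneous, and that $\mathbf{min}(c\v,\mathbf{0})=c\,\mathbf{min}(\v,\mathbf{0})$ for $c\ge0$ --- are all valid, and the change-of-basis computation leading to $M_{ik}=\delta_{ik}$ goes through exactly as you describe.  The only step that might deserve one more sentence is the existence of at least one $j$ with $d_{ij}>0$: this follows either from $M_{ii}=1$ or from the observation that $\b_i=\mathbf{0}$ would itself give a nontrivial $B$-coherent relation $1\cdot\b_i$, contradicting independence.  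With that noted, the argument is complete, and it is the natural linear-algebra-with-positivity proof one would expect for this statement.
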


\begin{prop}\label{pos reals FB}
If a positive $\reals$-basis for $B$ exists, then $\F_B$ is simplicial.  
The basis consists of exactly one vector in each ray of $\F_B$.
\end{prop}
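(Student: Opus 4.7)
The plan is to set up a bijection between basis elements and rays of $\F_B$ and then deduce simpliciality by a dimension count. The backbone is a uniqueness statement for positive expressions together with a sign-coherence lemma.

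First I would show that for any $\a\in\reals^n$, the positive $B$-coherent relation $\a=\sum c_i\b_i$ (with $c_i\ge 0$) supplied by the positive basis property is unique: two such relations differ by a $B$-coherent linear relation $\sum(c_i-c'_i)\b_i$ (conditions \eqref{linear eta} and \eqref{piecewise eta} follow linearly from the two expressions), and $\reals$-independence forces $c_i=c'_i$. The main technical step is the sign-coherence lemma: whenever $\a=\sum c_i\b_i$ is a positive $B$-coherent relation, every $\b_i$ with $c_i>0$ lies in the $B$-cone $C_\a$ of $\a$. Applying $\eta^B_\k$, condition \eqref{linear eta} gives $\eta^B_\k(\a)=\sum c_i\eta^B_\k(\b_i)$ and \eqref{piecewise eta} gives $\mathbf{min}(\eta^B_\k(\a),\mathbf{0})=\sum c_i\mathbf{min}(\eta^B_\k(\b_i),\mathbf{0})$. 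A coordinate-by-coordinate comparison, using $c_i\ge 0$ and the nonpositivity of each summand in the min identity, shows that each coordinate of $\eta^B_\k(\b_i)$ is nonnegative, nonpositive, or exactly zero whenever the corresponding coordinate of $\eta^B_\k(\a)$ is positive, negative, or zero respectively. This is exactly the face-of-$C_\a$ condition, so $\b_i\in C_\a$. I expect this coordinate analysis---particularly propagating the zero case through both conditions to force a strict equality---to be the main obstacle.

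Next I identify rays of $\F_B$ with basis elements. Every ray of $\F_B$ is in fact a $B$-cone: for a nonzero $\v$ on a ray $\rho$, the vector $\v$ lies in the relative interior of its $B$-cone $C_\v$ (each $B$-class is open in its affine hull), yet also on $\rho\subseteq C_\v$, which is a face of $C_\v$; compatibility forces $C_\v=\rho$. For such a $\v$, the sign-coherence lemma places each $\b_i$ in $\v$'s positive expression into $C_\v=\rho$, so these $\b_i$ are positive multiples of $\v$, and $\reals$-independence leaves exactly one basis element on $\rho$. Conversely, given a basis element $\b_i$ with (necessarily pointed) $B$-cone $C_i$, each extreme ray of $C_i$ is a face of $C_i$ (hence in $\F_B$) and so carries a basis element $\b_{j_k}$ by the previous sentence. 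Linearity of $\eta^B_\k$ on $C_i$ (Proposition~\ref{linear}) together with sign-coherence in $C_i$ (Proposition~\ref{contained Bcone}) makes the nonnegative expansion of $\b_i$ along the $\b_{j_k}$ a valid positive $B$-coherent relation, and uniqueness then forces $\b_i$ to coincide with some $\b_{j_k}$. So every basis element lies on a ray of $\F_B$.

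Simpliciality follows by a dimension count: Theorem~\ref{fan} makes $\F_B$ a complete fan in $\reals^n$, so each maximal $B$-cone is $n$-dimensional; by the previous paragraph its extreme rays each carry a distinct basis element, and $\reals$-independence caps the count at $n$, forcing exactly $n$ extreme rays and simpliciality. Faces of simplicial cones are simplicial, so $\F_B$ is simplicial, and the basis-ray bijection established above is the second assertion of the proposition.
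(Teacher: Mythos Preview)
Your sign-coherence lemma and the uniqueness argument are correct and are the heart of the matter. There is, however, a genuine gap in your step identifying rays of $\F_B$ with $B$-cones. You assert that ``each $B$-class is open in its affine hull,'' equivalently that every point lies in the relative interior of its own $B$-cone. This is false in general: the paper notes explicitly (just after defining $\F_B$) that faces of $B$-cones can fail to be $B$-cones, and your claim would imply the opposite. Indeed, if $\v\in\text{relint}(C_\v)$ for every $\v$, then for any proper face $F$ of a $B$-cone $C$ and any $\v\in\text{relint}(F)$, the cones $F$ and $C_\v$ are both faces of $C$ whose relative interiors meet at $\v$, forcing $F=C_\v$. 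So you cannot use this to conclude $C_\v=\rho$ for $\v$ on a ray $\rho$, and without that conclusion your bijection argument stalls.

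The fix is to reorder the argument and prove simpliciality of the \emph{maximal} cones first. Let $C$ be a maximal $B$-cone and pick $\v\in\text{relint}(C)$; here $C_\v=C$ is forced simply by maximality (the fan property gives $C\cap C_\v$ a face of $C$ meeting $\text{relint}(C)$, so $C\subseteq C_\v$, and maximality gives equality). Your sign-coherence lemma then places every $\b_i$ appearing in the positive expression of $\v$ inside $C$. Now the key observation you already have implicitly: any ordinary linear relation among basis vectors lying in a common $B$-cone is automatically $B$-coherent (linearity on $C$ gives \eqref{linear eta}, and sign-coherence gives \eqref{piecewise eta} coordinatewise), so $\reals$-independence forces these $\b_i$ to be linearly independent, hence at most $n$ of them. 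Varying $\v$ over $\text{relint}(C)$ shows the nonnegative span of $\{\b_i:\b_i\in C\}$ contains $\text{relint}(C)$ and sits inside $C$, so it equals $C$; thus $C$ is simplicial with its $n$ extreme rays through basis elements. Every cone of $\F_B$ is a face of some maximal cone, hence simplicial. Every basis element lies in some maximal cone, hence on one of its extreme rays, hence on a ray of $\F_B$; and every ray of $\F_B$ is an extreme ray of some maximal cone, hence carries exactly one basis element. This recovers both assertions without appealing to relative-openness of $B$-classes.

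The paper itself does not prove this proposition; it is quoted as \cite[Corollary~6.13]{universal}.
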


The following proposition is part of \cite[Proposition~6.7]{universal}.  %\margin{This is prop. {positive cone basis}.}
\begin{prop}\label{positive cone basis}
Suppose $(\b_i:i\in I)$ is an $R$-independent set for $B$ with the following property:
If $C$ is a $B$-cone, then the nonnegative $R$-linear span of $\set{\b_i:i\in I}\cap C$ contains $R^n\cap C$.
Then $(\b_i:i\in I)$ is a positive $R$-basis for $B$.
\end{prop}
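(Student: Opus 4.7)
The plan is to show simultaneously that $(\b_i:i\in I)$ is an $R$-spanning set for $B$ and that the spanning can be taken with nonnegative coefficients; since $R$-independence is given, this yields a positive $R$-basis.

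First, I would fix an arbitrary $\a\in R^n$. By Theorem~\ref{fan}, $\F_B$ is a complete fan, so $\a$ lies in some $B$-cone $C$. The hypothesis then provides a finite set $S\subseteq I$ and nonnegative coefficients $c_i\in R$ (for $i\in S$) with $\b_i\in C$ for every $i\in S$ and $\a=\sum_{i\in S}c_i\b_i$. The goal reduces to verifying that the formal expression $\a-\sum_{i\in S}c_i\b_i$ is a $B$-coherent linear relation; once this is done, both $R$-spanning and positivity follow at once.

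The verification has two parts, one for \eqref{linear eta} and one for \eqref{piecewise eta}. For \eqref{linear eta}, note that every vector appearing in the relation (namely $\a$ and each $\b_i$, $i\in S$) lies in the single $B$-cone $C$. By Proposition~\ref{linear}, $\eta_\k^B$ is linear on $C$ for every mutation sequence $\k$, so applying $\eta_\k^B$ to the identity $\a=\sum c_i\b_i$ yields $\eta_\k^B(\a)=\sum c_i\eta_\k^B(\b_i)$, which is exactly \eqref{linear eta}. For \eqref{piecewise eta}, invoke Proposition~\ref{contained Bcone}: since $C$ is a $B$-cone, $\eta_\k^B(C)$ is sign-coherent for every $\k$. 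Hence in each coordinate $j\in[n]$, the map $\v\mapsto \min(\v_j,0)$ restricted to $\eta_\k^B(C)$ coincides with either the linear map $\v\mapsto \v_j$ (when the common sign is nonpositive) or the zero map (when the common sign is nonnegative). In both cases, applying this map to the identity $\eta_\k^B(\a)=\sum c_i\eta_\k^B(\b_i)$ yields \eqref{piecewise eta} componentwise.

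There is no real obstacle here; the only conceptual point is to ensure that the nonnegative coefficients from the hypothesis apply to vectors lying in a common $B$-cone, so that both the mutation maps and the componentwise minimum behave linearly on all vectors appearing in the relation. The rest is bookkeeping through the definitions of $R$-spanning set and positive $R$-basis.
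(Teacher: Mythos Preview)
Your argument is correct. The paper itself does not prove this proposition; it is quoted as part of \cite[Proposition~6.7]{universal}, so there is no in-paper proof to compare against. Your approach---placing $\a$ in a single $B$-cone $C$, using the hypothesis to write $\a$ as a nonnegative $R$-combination of basis vectors lying in $C$, and then invoking linearity of $\eta_\k^B$ on $C$ (Proposition~\ref{linear}) together with sign-coherence of $\eta_\k^B(C)$ (Proposition~\ref{contained Bcone}) to verify \eqref{linear eta} and \eqref{piecewise eta}---is exactly the natural one, and is presumably close to the argument in~\cite{universal}.

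One small remark: when you say ``$\F_B$ is a complete fan, so $\a$ lies in some $B$-cone $C$,'' completeness of $\F_B$ literally only gives you a cone of $\F_B$, which could be a proper face of a $B$-cone rather than a $B$-cone. This is harmless, since any face of a $B$-cone is contained in a $B$-cone (or, more directly, every point lies in its own $B$-class and hence in the closure of that class), but it would be cleaner to say so.
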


\section{Cluster algebras from triangulated surfaces}\label{surface sec}
In this section, we give background material on cluster algebras arising from triangulated surfaces, following \cite{cats1,cats2}.

\begin{definition}[\emph{Bordered surface with marked points}]\label{S M def}
Let $\S$ be a surface obtained from a compact, oriented surface without boundary by deleting a finite collection of disjoint open disks.
The boundary of $\S$ is a finite collection of circles called \newword{boundary components}, one for each disk removed.
Let $\M$ be a nonempty, finite collection of points in $\S$ called \newword{marked points}, with at least one marked point in each boundary component.
Marked points in the interior of $\S$ are called \newword{punctures}.
A curve contained in the boundary of $\S$, connecting two points in $\M$, with no points of $\M$ in the interior of the curve, is a \newword{boundary segment}.  
We exclude unpunctured monogons, digons, and triangles, as well as once-punctured monogons.
If $\S$ is a sphere, then we require that it have at least $4$ punctures.  
In \cite{cats1,cats2}, the surface $\S$ is considered to be a Riemann surface, but the complex structure is not needed in this paper.
\end{definition}

\begin{definition}[\emph{Arcs and triangulations}]\label{tri def}
%sinceTAMS:  minor rephasings here.
An \newword{arc} in $(\S,\M)$ is a curve in $\S$, up to isotopy relative to $\M$, whose endpoints are points in $\M$.
We require that the arc not intersect itself, except that its endpoints may coincide, and that the arc is disjoint from $\M$ and from the boundary of $\S$, except at its endpoints.
We also exclude arcs that bound an unpunctured monogon and arcs that, together with a boundary segment connecting the endpoints, define an unpunctured digon.
Two arcs $\alpha$ and $\gamma$ are \newword{incompatible} if they intersect in $\S\setminus\M$ and if the intersection cannot be removed by (independently) isotopically deforming the arcs.
Otherwise, they are \newword{compatible}.
%sinceTAMS:  got rid of the adjective "ideal" throughout and added an explanation.
A \newword{triangulation} is a maximal collection of distinct pairwise compatible arcs.
The exclusion of certain pairs $(\S,\M)$ in Definition~\ref{S M def} assures that $(\S,\M)$ admits at least two triangulations.
The arcs defining a triangulation divide $\S$ into \newword{triangles}, which have $1$, $2$, or $3$ distinct vertices and $2$ or $3$ distinct sides.
(In \cite{cats1,cats2}, the terms \newword{ideal triangles} and \newword{ideal triangulations} are used because the marked points lie at infinity in the complex structure.)
A \newword{self-folded triangle} is a triangle with 2 distinct sides.
(For example, later in Figure~\ref{shear self-folded fig}, one of the two triangles shown in each picture is self-folded.)
An arc in a triangulation is an edge of exactly two triangles in the triangulation unless it constitutes two sides of a self-folded triangle.
Removing the arc from the triangulation leaves a quadrilateral region, which can be completed to two triangles by either one of the diagonals of the quadrilateral.
A \newword{flip} is the operation of removing an arc from a triangulation and forming a new triangulation by inserting the other diagonal of the quadrilateral.
\end{definition}

\begin{definition}[\emph{Tagged arcs and tagged triangulations}]\label{tagged tri def}
A \newword{tagged arc} in $(\S,\M)$ is an arc in $\S$ that does not cut out a once-punctured monogon, with each end of the arc designated (or ``tagged'') either \newword{plain} or \newword{notched}.
An endpoint on a boundary component must be tagged plain, and if the endpoints of an arc coincide, then they must have the same tag.
Two tagged arcs are \newword{compatible} if one of the following conditions holds:
the two underlying untagged arcs are distinct (i.e.\ nonisotopic) and compatible and any shared endpoints have the same tagging, or
the two underlying untagged arcs are the same and their tagging agrees on one endpoint but not the other.
A \newword{tagged triangulation} is a maximal collection of distinct pairwise compatible tagged arcs.

There is a map $\tau$ from ordinary arcs to tagged arcs, defined as follows:
If an arc $\gamma$ does not bound a once-punctured monogon, then $\tau(\gamma)$ is the same curve, with both endpoints tagged plain.
If $\gamma$ has both endpoints at $a\in\M$  and bounds a once-punctured monogon with puncture $b\in\M$, then $\tau(\gamma)$ is the arc, in the monogon, from $a$ to $b$, tagged plain at $a$ and notched at $b$.
Starting with a triangulation, and applying $\tau$ to each arc, we obtain a tagged triangulation.

As an example of the map $\tau$, consider the triangulation $T$ of a punctured digon shown in the left picture of Figure~\ref{tau fig}.
One arc defining $T$ connects a marked point on the boundary to the puncture, and the other arc (in purple or light gray) encloses the puncture.
The right picture of the figure shows the tagged triangulation $\tau(T)$.
\begin{figure}[ht]
\scalebox{1.2}{\includegraphics{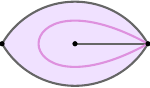}}\qquad\qquad\scalebox{1.2}{\includegraphics{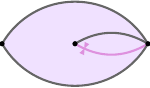}}
\caption{The map $\tau$ from ordinary arcs to tagged arcs}
\label{tau fig}
\end{figure}
\end{definition}

\begin{definition}[\emph{Tagged arc complex and tagged flips}]\label{complex and flips}
The tagged triangulations are the maximal simplices of a simplicial complex called the \newword{tagged arc complex}.
This is the complex whose vertices are the tagged arcs, with a set of arcs defining a face if and only if they are pairwise compatible.
All tagged triangulations of $(\S,\M)$ have the same number of tagged arcs.
(See \cite[Proposition~2.10]{cats1}.)
Given a tagged triangulation $T$ and a particular tagged arc $\gamma$ in $T$, there is a unique tagged triangulation $T'$ such that $T'\neq T$ and $T\setminus\set{\gamma}\subset T'$.
The tagged triangulations $T$ and $T'$ are said to be related by a \newword{tagged flip}, or more specifically, $T'$ is obtained from $T$ by a flip at $\gamma$.
An exact combinatorial description of tagged flips is found in \cite[Remark~5.13]{cats2}.
The dual graph to the tagged arc complex is the graph whose vertices are tagged triangulations and whose edges are pairs of triangulations related by a tagged flip.
This dual graph is connected \cite[Proposition~7.10]{cats1}, except when $\S$ has no boundary components and exactly one puncture.
In the latter case, the dual graph of the tagged arc complex has exactly two connected components, one consisting of triangulations with all arcs tagged plain and the other consisting of triangulations with all arcs tagged notched.
The two components are related by the symmetry of reversing all taggings (plain$\leftrightarrow$notched).
\end{definition}

Given a tagged triangulation $T$, there is a simple recipe \cite[Definition~5.15]{cats2} for writing an exchange matrix $B(T)$, called the \newword{signed adjacency matrix} of~$T$.
This is an $n\times n$ matrix whose rows and columns are indexed by the tagged arcs $\gamma$ in~$T$.
We do not need the details of this definition.
Rather, what is important for us are some observations and results of \cite{cats1,cats2}.
First, $B(T)$ is skew-symmetric, rather than merely skew-symmetrizable.
Next, $B(T)$ does not have a row or column consisting of all zeros.
Finally, we quote very small parts of \cite[Theorem~6.1]{cats2} and \cite[Corollary~6.2]{cats2}.

\begin{prop}\label{signed adjacency mut}
If $T$ and $T'$ are tagged triangulations such that $T'$ is obtained from $T$ by a flip at an arc $\gamma$, then $B(T')$ is obtained from $B(T)$ by matrix mutation at the position indexed by $\gamma$. 
\end{prop}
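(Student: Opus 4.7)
The plan is to reduce to the combinatorial recipe for $B(T)$ from \cite[Definition~5.15]{cats2} and check compatibility of a flip with matrix mutation entry by entry. The recipe associates to each tagged triangulation $T$ an $n\times n$ matrix whose $(\alpha,\beta)$ entry is a signed count of ``incidences'' of $\alpha$ and $\beta$ within the triangles of (an unfolding of) $T$, where the sign records the orientation of the triangle. So the strategy is: identify the local region of the surface where a flip changes $T$, and verify that the change in incidence counts inside this region matches the mutation formula \eqref{b mut} at the index corresponding to the flipped arc $\gamma$.

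First I would handle the case where $T$ is an ordinary (untagged) triangulation and $\gamma$ is not a side of any self-folded triangle. Then a flip at $\gamma$ replaces $\gamma$ by the other diagonal $\gamma'$ of the unique quadrilateral $Q$ cut out by removing $\gamma$. The entries of $B(T)$ and $B(T')$ agree outside of $Q$, and inside $Q$ one checks by direct inspection of the four sides and two triangles of $Q$ (a $2\times 2$ case analysis on whether or not each side coincides with other sides of $Q$) that the entry change matches $b_{ij} \mapsto b_{ij} + \sgn(b_{\gamma j})[b_{i\gamma}b_{\gamma j}]_+$, together with negation of the $\gamma$ row and column.

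Next I would extend to triangulations containing self-folded triangles. If $\gamma$ is the ``outer'' arc of a self-folded triangle, then a flip of $\gamma$ is not defined for ordinary triangulations, but via $\tau$ it corresponds to a well-defined flip of tagged triangulations; to compute the result one passes to a locally ``unfolded'' triangulation as in \cite[Section~5]{cats2}, performs the flip there (which is non-self-folded), and pushes back down. The mutation formula is preserved because the unfolding is compatible with $B(T)$ by construction. If $\gamma$ is the ``inner'' arc of a self-folded triangle, a similar unfolding argument applies after swapping the tagging at the enclosed puncture.

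Finally the fully tagged case is obtained by an induction on the number of notched ends, using the fact \cite[Remark~5.13]{cats2} that a flip of a tagged arc can always be realized either as a flip in an underlying triangulation (after passing through $\tau$) or as a re-tagging operation, and both operations are compatible with $B(T)$. The main obstacle is precisely this case analysis around self-folded triangles and tagged ends: there are several local configurations that must each be checked against the mutation rule, and the unfolding trick must be applied carefully so that the combinatorial signs are tracked correctly. Once these local checks are complete, the global statement follows because both $B(T')$ and $\mu_\gamma(B(T))$ agree outside the affected region.
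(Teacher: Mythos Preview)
The paper does not prove this proposition at all: it is explicitly introduced as a quotation of part of \cite[Theorem~6.1]{cats2}. So there is nothing to compare your argument against here; the paper simply cites the result from Fomin--Thurston.

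Your outline is in the spirit of how the result is actually established in \cite{cats1,cats2}: a local analysis of the quadrilateral affected by a flip, together with a reduction of the tagged/self-folded situations to the ordinary one. One small slip: you have the roles of the two arcs in a self-folded triangle reversed. The arc that cannot be flipped in an ordinary triangulation is the \emph{inner} arc (the one to the puncture, which constitutes two sides of the self-folded triangle; cf.\ Definition~\ref{tri def}), not the enclosing loop. The loop is an edge of two distinct triangles and flips normally. Apart from this labeling issue, your plan of reducing to an ordinary flip via $\tau$ and checking the mutation formula on the quadrilateral is correct, but carrying it out carefully (tracking all the degenerate configurations where sides of the quadrilateral coincide) is exactly the work done in the cited references, and there is no shortcut in this paper.
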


\begin{theorem}\label{cluster main}
Let $\A$ be a cluster algebra with initial exchange matrix $B(T)$ and arbitrary coefficients.
Then the exchange graph of $\A$ is isomorphic to the dual graph of the tagged arc complex or, if $\S$ has no boundary components and exactly one puncture, the connected component of the dual graph consisting of tagged triangulations with all arcs tagged plain.
\end{theorem}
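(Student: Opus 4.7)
The plan is to construct a bijection between the tagged triangulations of $(\S,\M)$ (restricted, in the once-punctured closed case, to those with all plain tags) and the unlabeled seeds of $\A$, with tagged flips matching mutations. Fix the given tagged triangulation $T$ as the initial one and label its arcs $1,\ldots,n$. For any vertex $t\in\T_n$ reached from $t_0$ by a sequence $\k$, define $\Phi(t)$ to be the tagged triangulation obtained from $T$ by the corresponding sequence of tagged flips; such flips are always defined by \cite[Remark~5.13]{cats2}, and Proposition~\ref{signed adjacency mut} guarantees $B(\Phi(t))=B_t$. Hence $\Phi$ carries adjacent vertices of $\T_n$ to adjacent vertices of the dual graph of the tagged arc complex.

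Surjectivity onto the relevant component is immediate from the connectedness of that component (Definition~\ref{complex and flips}). In the once-punctured closed case, choosing $T$ with all plain tags and inspecting the combinatorial description of tagged flips shows that the all-plain property is preserved under flips, so $\Phi$ lands in the all-plain component. It then remains to show that $\Phi$ descends to a well-defined bijection at the level of unlabeled seeds, i.e., that distinct tagged triangulations produce distinct unlabeled seeds; the converse (that the same unlabeled seed is hit by any two mutation sequences leading to the same tagged triangulation) follows from this together with the matching of flips and mutations.

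This injectivity is the main obstacle. My plan is to establish a bijection between cluster variables of $\A$ and tagged arcs of $(\S,\M)$, which forces distinct clusters for distinct tagged triangulations. First, I would reduce to the case of principal coefficients, since a coefficient specialization cannot merge previously distinct clusters. Then, following Penner's lambda-length framework, I would specialize each cluster variable to the lambda length $\lambda(\gamma)$ of its corresponding tagged arc $\gamma$ on decorated Teichm\"uller space, with a suitable double cover accommodating tagged endpoints at punctures. The Ptolemy relations among lambda lengths match the exchange relations of the cluster algebra, yielding a well-defined algebra homomorphism from $\A$ into functions on decorated Teichm\"uller space; since distinct tagged arcs define generically distinct lambda-length functions, distinct tagged arcs give distinct cluster variables. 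The delicate bookkeeping around notched tags, self-folded triangles, and the double cover is the hardest step, and it is here that the once-punctured closed exception emerges structurally: the decorated Teichm\"uller space splits into plain and notched pieces related by symmetry, and a single initial seed can only reach one piece under mutation.
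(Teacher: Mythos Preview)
The paper does not prove this theorem; it is quoted from \cite[Corollary~6.2]{cats2}, so there is no in-paper argument to compare against. Your sketch is in the spirit of the Fomin--Thurston proof, using the lambda-length model on decorated Teichm\"uller space to realize cluster variables as functions attached to tagged arcs.

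That said, there are two genuine gaps. First, your reduction to principal coefficients is stated backwards: coefficient specialization \emph{can} merge previously distinct seeds (this is precisely why the exchange graph might a priori depend on coefficients). What is true is that the exchange graph for arbitrary coefficients is a quotient of the one for principal coefficients \cite[Theorem~4.6]{ca4}, so proving the result for principal coefficients only gives one inequality. Showing that the exchange graph does not collapse further for other coefficient choices is an additional step, handled in \cite{cats2} by establishing that the exchange graph is independent of coefficients in the surface case.

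Second, the direction you call ``the converse''---that two mutation sequences reaching the same tagged triangulation yield the same unlabeled seed---does not follow from the injectivity statement you prove, nor from ``the matching of flips and mutations.'' Knowing that distinct triangulations give distinct seeds yields a well-defined surjection from the exchange graph onto the dual graph, but a surjective map of connected $n$-regular graphs need not be an isomorphism. This direction is in fact the heart of the lambda-length argument: one must show that the cluster variable attached to a tagged arc depends only on the arc and not on the mutation path used to reach it, which is exactly what the Ptolemy-relation model establishes. Your outline gestures at this but treats it as automatic rather than as the main content.
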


\begin{definition}[\emph{Integral and rational laminations}]\label{lam def}
An \newword{integral (unbounded measured) lamination} is a collection of pairwise nonintersecting curves in $\S$, up to isotopy relative to $\M$.
We take the curves to be distinct up to isotopy, but each curve $\lambda$ appears with a positive integer weight $w_\lambda$.
Each curve must be
\begin{itemize}
\item a closed curve,
\item a curve whose two endpoints are unmarked boundary points,
\item a curve having one endpoint an unmarked boundary point, with the other end spiraling (clockwise or counterclockwise) into a puncture, or
\item a curve spiraling into (not necessarily distinct) punctures at both ends.
\end{itemize}
However, we exclude
\begin{itemize}
\item curves with self-intersections,
\item curves that are contractible in $\S\setminus\M$,
\item curves that are contractible to a puncture, and
\item curves having two endpoints on the boundary and being contractible to a portion of the boundary containing zero or one marked points.
\end{itemize}
When a curve spirals into a puncture, that puncture is a \newword{spiral point} of the curve.

A \newword{rational (unbounded measured) lamination} is defined in the same way, but with positive rational weights rather than positive integer weights.
\end{definition}

\begin{definition}[\emph{Shear coordinates}]\label{shear def}
Given a lamination $L$ and a tagged triangulation $T$,  the \newword{shear coordinate vector} is a vector $\b(T,L)=(b_\gamma(T,L):\gamma\in T)$ indexed by arcs $\gamma$ in $T$.
The entry $b_\gamma(T,L)$ is defined as follows.
We deal with each curve $\lambda\in L$ separately and set
\begin{equation}\label{each curve separately}
b_\gamma(T,L)=\sum_{\lambda\in L}w_\lambda b_\gamma(T,\lambda),
\end{equation}
where $w_\lambda$ is the weight of $\lambda$ in $L$ and where $b_\gamma(T,\lambda)$ is a quantity that we now define.
First, everywhere there is a puncture $p$ incident only to notched ends of arcs, change all of these notched ends to plain ends and reverse the direction of any spirals of $\lambda$ at $p$.
We obtain an altered tagged triangulation with no notched endpoints on arcs, except that at some points $b$ there are two tagged arcs, whose underlying untagged arcs are the same, one tagged plain and one tagged notched at~$b$.
If $\gamma$ is one of these arcs for some $b$, then if necessary we switch notched ends with plain ends and reverse any spirals of $\lambda$ at $b$ so that $\gamma$ becomes an arc with one end notched.
Call the altered tagged triangulation $T'$ and call the curve with some spirals possibly reversed $\lambda'$.
Define $b_\gamma(T,\lambda)$ to be $b_\gamma(T',\lambda')$, calculated as follows.

There is a triangulation $T^\circ$ with $\tau(T^\circ)=T'$.
Reuse the name $\gamma$ for the arc in $T^\circ$ corresponding to $\gamma\in T$.
Because of how we defined $T'$, the arc $\gamma$ is contained in two distinct triangles of $T^\circ$ (rather than defining two edges of the same self-folded triangle).
Choose an isotopy deformation of $\lambda'$ (reusing the symbol $\lambda'$ for this deformation) so as to minimize the number of intersection points of $\lambda'$ with $\gamma$.
Then $b_\gamma(T',\lambda')$ is the sum, over each intersection of $\lambda'$ with $\gamma$, of a number in $\set{-1,0,1}$.
The number is $0$ unless $\lambda'$ intersects the two triangles containing $\gamma$ as shown in Figure~\ref{shear fig}, in which case it is $1$ or $-1$ as shown.
\begin{figure}[ht]
\raisebox{42 pt}{$+1$}\,\,\includegraphics{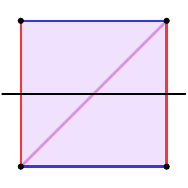}\qquad\qquad\includegraphics{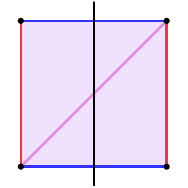}\,\,\raisebox{42 pt}{$-1$}
\caption{Computing shear coordinates:  basic case}
\label{shear fig}
\end{figure}
In both pictures, the arc $\gamma$ is the diagonal of the square and the curve $\lambda'$ is the vertical or horizonal line intersecting the square.
Figure~\ref{shear fig} introduces a color convention that we maintain throughout the paper.
When calculating a shear coordinate at $\gamma$, we color $\gamma$ purple and color the other edges red and blue so that, traveling clockwise along each triangle, we meet the colors in the order red, blue, purple.  
Those reading the paper in grayscale may have some difficulty in distinguishing the arcs;  it may be helpful to know that the blue arcs become a darker gray than the red arcs and the purple arcs are lightest of all.
We also shade the interiors of the red-blue-purple triangles with a light purple.
The quadrilaterals represented in Figure~\ref{shear fig} might have fewer than $4$ distinct vertices and $4$ distinct sides.
Many examples appear later on.
The first example occurs in Figure~\ref{shear self-folded fig}, which illustrates shear coordinates in the case where one of the two triangles in Figure~\ref{shear fig} is self-folded.
\begin{figure}[ht]
\raisebox{33 pt}{$+1$}\,\,\scalebox{1.2}{\includegraphics{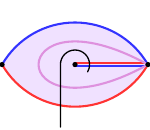}}\qquad\qquad\scalebox{1.2}{\includegraphics{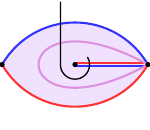}}\,\,\raisebox{33 pt}{$-1$}
\caption{Computing shear coordinates: self-folded case}
\label{shear self-folded fig}
\end{figure}
Although a red and blue edge coincide, we show both colors for clarity, putting the colors on the appropriate sides to make the connection to Figure~\ref{shear fig}.
\end{definition}

The following are (a part of) \cite[Theorem~13.5]{cats2} and \cite[Theorem~13.6]{cats2}.
\begin{theorem}\label{mut lam}
Fix a lamination $L$.
If $T$ and $T'$ are tagged triangulations such that $T'$ is obtained from $T$ by a flip at an arc $\gamma$, then $\b(T',L)=\eta_\gamma^{B(T)}(\b(T,L))$, where $\eta_\gamma^{B(T)}$ is the mutation map at the index for the arc $\gamma$ in~$T$.
\end{theorem}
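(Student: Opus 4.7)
By \eqref{each curve separately}, shear coordinates are additive over curves in a lamination with integer (or rational) weights, and the mutation map $\eta_\gamma^{B(T)}$ from \eqref{mutation map def} is also additive in the coefficient row to which it is applied. Hence, the plan is to reduce to the case of a single curve $\lambda$ of weight $1$, and prove $\b(T', \lambda) = \eta_\gamma^{B(T)}(\b(T,\lambda))$ entry by entry.

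Next I would handle the reduction built into Definition~\ref{shear def}: by the prescribed re-tagging procedure, both sides of the desired identity are computed from an associated ordinary triangulation $T^\circ$ (with $\tau(T^\circ) = T'$) and a modified curve $\lambda'$. One needs to check that the re-tagging procedure and the mutation map intertwine correctly, i.e.\ that the entries of $B(T)$ match the combinatorics of $T^\circ$ in the way prescribed by \cite[Definition~5.15]{cats2} under Proposition~\ref{signed adjacency mut}. This is a bookkeeping step. After this reduction, one may assume $T = T^\circ$ is an ordinary triangulation in which $\gamma$ lies in two distinct (non-self-folded) triangles, so the quadrilateral around $\gamma$ is well defined and the flip replaces $\gamma$ with the opposite diagonal $\gamma^*$.

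The main step is a local analysis of how shear coordinates change under the flip. Since shear coordinates $b_{\gamma'}(T,\lambda)$ are computed in the quadrilateral around $\gamma'$ in $T$, only those arcs $\gamma'$ sharing a triangle with $\gamma$ can possibly have different shear coordinates in $T'$; for any other $\gamma'$ the quadrilateral around it is unchanged by the flip, so $b_{\gamma'}(T',\lambda) = b_{\gamma'}(T,\lambda)$, which matches the ``otherwise'' case of \eqref{mutation map def} since $b_{\gamma\gamma'} = 0$ in that situation. The case $\gamma' = \gamma$ (that is, replacing $\gamma$ by $\gamma^*$) gives $b_{\gamma^*}(T',\lambda) = -b_\gamma(T,\lambda)$ by inspection: the quadrilateral flips and the $\pm 1$ contribution of each crossing in Figure~\ref{shear fig} reverses sign. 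What remains is the core case: $\gamma'$ is an edge of one of the two triangles of $T$ containing $\gamma$. Here one enumerates the configurations based on the sign of $b_\gamma(T,\lambda)$ (which determines how $\lambda$ zig-zags across $\gamma$) and on the position of $\gamma'$ in its triangle (which determines the sign of $b_{\gamma'\gamma}$ in $B(T)$), and one checks that the contributions of $\lambda$ to $b_{\gamma'}(T',\lambda)$ equal $b_{\gamma'}(T,\lambda)$ plus (or minus, depending on sign conventions) $b_\gamma(T,\lambda)\cdot b_{\gamma\gamma'}$ exactly when the sign conditions of \eqref{mutation map def} are met.

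The hard part is this final case analysis: one must isotopically arrange $\lambda$ so that its intersections with the two triangles neighboring $\gamma$ are in minimal position in both $T$ and $T'$ simultaneously, and then match, strand by strand, which crossings contribute $\pm 1$ to $b_\gamma(T,\lambda)$, to $b_{\gamma'}(T,\lambda)$, and to $b_{\gamma'}(T',\lambda)$. The sign rule of \eqref{mutation map def} — which only creates a correction when $a_k$ and $b_{kj}$ have the same sign (both nonnegative or both nonpositive) — is exactly what emerges once one observes that a $\lambda$-strand contributes a new $\pm 1$ at $\gamma'$ in $T'$ only when its ``zigzag direction'' at $\gamma$ is compatible with the orientation of the triangle shared by $\gamma$ and $\gamma'$. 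The extra care required is in verifying that self-folded triangles adjacent to $\gamma$ (handled by Figure~\ref{shear self-folded fig}) obey the same accounting; here the reduction to $T^\circ$ via $\tau$ and the re-tagging of $\lambda$ in Definition~\ref{shear def} are essential.
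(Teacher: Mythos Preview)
The paper does not prove this theorem; it is quoted (together with Theorem~\ref{lam bij}) directly from \cite[Theorem~13.5]{cats2}. So there is no proof in the paper to compare against, and I assess your sketch on its own terms.

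There is a genuine gap in your reduction step. You assert that the mutation map $\eta_\gamma^{B(T)}$ ``is also additive in the coefficient row to which it is applied,'' but this is false: by \eqref{mutation map def}, $\eta_\gamma^{B(T)}$ is only \emph{piecewise} linear, with the domains of linearity determined by the sign of the $\gamma$-coordinate. In particular, $\eta_\gamma^{B(T)}(\a_1+\a_2)\neq\eta_\gamma^{B(T)}(\a_1)+\eta_\gamma^{B(T)}(\a_2)$ whenever the $\gamma$-entries of $\a_1$ and $\a_2$ have strictly opposite signs and $b_{\gamma j}\neq 0$ for some $j$. Your reduction from a lamination $L$ to its constituent curves therefore does not follow from additivity alone.

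The repair is to first argue that the curves in a lamination, being pairwise non-intersecting, have shear coordinates $b_\gamma(T,\lambda)$ that all weakly agree in sign for each fixed arc $\gamma$ (a crossing of the quadrilateral in the $+1$ sense and a crossing in the $-1$ sense would force the two curves to intersect; compare the argument in the second half of the proof of Proposition~\ref{FBT compatible}). On such a sign-coherent collection the map $\eta_\gamma^{B(T)}$ \emph{is} linear, and your reduction then goes through. Once this is in place, your local quadrilateral analysis for a single curve is the standard route and is essentially correct in outline, though the self-folded bookkeeping and the tagging reductions of Definition~\ref{shear def} need more care than your sketch provides; this is precisely the content of \cite[Theorem~13.5]{cats2}.
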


\begin{theorem}\label{lam bij}
Fix a tagged triangulation $T$.
Then the map $L\mapsto\b(T,L)$ is a bijection between integral (resp. rational) unbounded measured laminations and $\integers^n$ (resp. $\rationals^n$).
\end{theorem}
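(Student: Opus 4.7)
The plan is to construct an explicit inverse to the map $L \mapsto \b(T,L)$ and verify that the two maps are mutually inverse. I will focus on the integer case; the rational case follows by essentially identical arguments (or by noting that a lamination with rational weights becomes an integral lamination after multiplying all weights by a common denominator).

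First I would reduce to the setting of an ordinary (untagged) triangulation $T^\circ$ with $\tau(T^\circ) = T$. The alteration procedure in Definition~\ref{shear def} sets up a bijection between laminations (with their spiral conventions near punctures incident only to notched ends of $T$) and laminations relative to $T^\circ$, and this bijection preserves shear coordinates up to the explicit sign/spiral modifications built into Definition~\ref{shear def}. This reduction handles the tagged aspects and leaves a purely topological problem for the ordinary triangulation $T^\circ$.

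Given $\b \in \integers^n$, I would then construct a lamination $L(\b)$ locally, triangle by triangle. For each arc $\gamma$, I place $|b_\gamma|$ parallel strands transverse to $\gamma$, each crossing $\gamma$ in the $\sgn(b_\gamma)$-configuration shown in Figure~\ref{shear fig} (or in Figure~\ref{shear self-folded fig} if $\gamma$ borders a self-folded triangle). Within each triangle of $T^\circ$, the prescribed strands entering through each edge can be connected into arcs: a standard matching argument (avoiding bigons against boundary segments) shows the pattern of connections is forced up to isotopy. Globally, these triangle-by-triangle pieces assemble into closed curves, boundary-to-boundary curves, and curves spiraling into punctures, yielding a genuine lamination $L(\b)$. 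By construction and by the minimum-crossing stipulation in Definition~\ref{shear def}, one has $\b(T, L(\b)) = \b$, which gives surjectivity.

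For injectivity, I would argue that any lamination $L$, after isotoping each curve to minimize its intersection number with every arc of $T^\circ$ simultaneously, has the same local pattern in each triangle as $L(\b(T,L))$. The key point is that a minimum-crossing representative passes through each triangle as a disjoint union of ``elementary'' arcs of only finitely many combinatorial types, and the signed crossing counts (that is, the shear coordinates) determine the multiplicities of these types uniquely. The main obstacle is the case analysis around self-folded triangles and near punctures, where one must carefully track how the spiral direction interacts with the $\pm 1$ convention of Figure~\ref{shear self-folded fig} and with the reversal of notched ends in the alteration step; these cases are where the bookkeeping is most delicate and where an incorrect normalization can cause apparent failures of injectivity. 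Once the normal-form argument is in place, two laminations with equal shear coordinates are forced to agree as weighted collections of isotopy classes, completing the proof.
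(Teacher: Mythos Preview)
The paper does not prove this theorem at all: it is quoted as \cite[Theorem~13.6]{cats2}, i.e.\ as a result of Fomin and Thurston, and no argument is given in the present paper. So there is no ``paper's own proof'' to compare against; any correct argument you supply would be a genuine addition.

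That said, your reconstruction step contains a real gap. The shear coordinate $b_\gamma(T,L)$ is \emph{not} the number of times $L$ crosses $\gamma$. In the quadrilateral around $\gamma$ there are strands that enter through one edge and leave through an adjacent (rather than opposite) edge; these cross $\gamma$ but contribute $0$ in Figure~\ref{shear fig}. Thus a lamination can cross $\gamma$ many more times than $|b_\gamma|$, and ``placing $|b_\gamma|$ parallel strands transverse to $\gamma$'' does not recover the local picture. Concretely, your triangle-by-triangle matching can fail: if the three arcs bounding a triangle have shear coordinates $(1,0,0)$, your recipe puts a single strand end on one edge and nothing on the other two, so there is nothing to match it to. In the actual inverse construction one must first recover the (unsigned) intersection numbers with all arcs from the shear data, which is a genuinely global piecewise-linear computation, and only then assemble the normal arcs in each triangle from those intersection numbers. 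Your injectivity sketch has the same problem: knowing only the signed ``diagonal'' crossings at each arc does not by itself pin down the multiplicities of the three elementary arc types in a triangle, so the claim that ``the signed crossing counts determine the multiplicities of these types uniquely'' needs justification that goes beyond the local picture. The argument in \cite{cats2} handles this by relating shear coordinates to the tropicalization of lambda lengths (equivalently, to Thurston's transverse measure coordinates), rather than by a direct edge-by-edge reconstruction.
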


\section{Quasi-laminations and the Curve Separation Property}\label{quasi sec}
Theorem~\ref{mut lam} suggests the idea of constructing the mutation fan $\F_B$ and a basis for $B$ in terms of laminations.
It turns out that unbounded measured laminations are almost, but not quite, the right tool.
In this section, we develop the right tool, which we call quasi-laminations.
We also define the Curve Separation Property, a property of $(\S,\M)$ that allows us to construct the rational part of the mutation fan in the context of quasi-laminations.
In Section~\ref{curve separation sec}, we establish the Curve Separation Property for all marked surfaces except those with no boundary components and exactly one puncture.

\begin{definition}[\emph{Quasi-laminations}]\label{quasi lam def}
A curve in $\S$ is \newword{allowable} if it satisfies the requirements placed on curves in Definition~\ref{lam def} and is not one of the following:
\begin{itemize}
\item a curve that has both endpoints on the same boundary segment and that, together with the portion of the boundary between its endpoints, cuts out a once-punctured disk, and 
\item a curve with two coinciding spiral points that cuts out a once-punctured disk.
\end{itemize}
Figure~\ref{exclude curves} illustrates these two excluded curves.
\begin{figure}[ht]
\scalebox{1.2}{\includegraphics{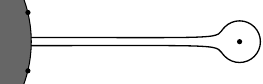}}\qquad\scalebox{1.2}{\includegraphics{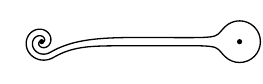}}
\caption{Excluded curves for quasi-laminations}
\label{exclude curves}
\end{figure}

Two allowable curves are \newword{compatible} if they are nonintersecting or if they are identical except that, at one (and only one) end of the curve, they spiral opposite directions into the same point.
Figure~\ref{allow intersect} illustrates the allowed intersections between compatible curves.
One curve is drawn black and the other is dotted and green.
\begin{figure}[ht]
\scalebox{1.2}{\includegraphics{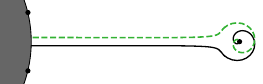}}\qquad\scalebox{1.2}{\includegraphics{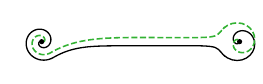}}
\caption{Allowed intersections for quasi-laminations}
\label{allow intersect}
\end{figure}

An \newword{integral quasi-lamination} is a collection of pairwise compatible allowable curves with positive integer weights.
A \newword{rational quasi-lamination} is a collection of pairwise compatible allowable curves with positive rational weights.
\end{definition}

\begin{definition}[\emph{Shear coordinates of quasi-laminations}]\label{quasi-shear def}
Let $T$ be a tagged triangulation, let $\gamma$ be a tagged arc in $T$ and let $L$ be a quasi-lamination.
Then the shear coordinates $\b(T,L)$ are defined as a weighted sum of shear coordinates $\b(T,\lambda)$ of the individual curves $\lambda$ in $L$, just as in Definition~\ref{shear def}.
\end{definition}

We should think of the definition of quasi-laminations as a harmless modification of the definition of laminations.
Indeed, given a rational unbounded measured lamination, one can replace all curves of the type illustrated in Figure~\ref{exclude curves} with pairs of intersecting but compatible curves of the corresponding type shown in Figure~\ref{allow intersect}.
(If some curve $\lambda$ appears twice in the result, we replace the two copies of $\lambda$ with a single copy of $\lambda$ whose weight is the sum of the original two weights.)
This is a bijection, preserving integrality and shear coordinates, between rational unbounded measured laminations and quasi-laminations.
The following theorems are now immediate from Theorems~\ref{mut lam} and~\ref{lam bij}.

\begin{theorem}\label{mut q-lam}
Fix a quasi-lamination $L$.
If $T$ and $T'$ are tagged triangulations such that $T'$ is obtained from $T$ by a flip at an arc $\gamma$, then $\b(T',L)=\eta_\gamma^{B(T)}(\b(T,L))$, where $\eta_\gamma^{B(T)}$ is the mutation map at the index for the arc $\gamma$ in $T$.
\end{theorem}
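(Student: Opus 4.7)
The plan is to deduce this from Theorem~\ref{mut lam} by invoking the shear-coordinate-preserving bijection between rational unbounded measured laminations and quasi-laminations that was described in the paragraph just before the theorem. Given the quasi-lamination $L$, I would form its image $\hat L$ under the inverse of that bijection: each compatible but intersecting pair from Figure~\ref{allow intersect} appearing inside $L$ is merged into the corresponding single excluded curve from Figure~\ref{exclude curves}, carrying the matched weight, while the remaining (non-intersecting) curves of $L$ persist unchanged. Since the bijection preserves shear coordinates, $\b(T,L)=\b(T,\hat L)$ and $\b(T',L)=\b(T',\hat L)$ for the two tagged triangulations under consideration.

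Next I would apply Theorem~\ref{mut lam} to the honest rational lamination $\hat L$, using the flip hypothesis, to obtain
\[
\b(T',\hat L)=\eta_\gamma^{B(T)}\bigl(\b(T,\hat L)\bigr).
\]
Substituting the two shear-coordinate identities into this equation yields $\b(T',L)=\eta_\gamma^{B(T)}(\b(T,L))$, which is the desired conclusion.

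The only content not already packaged inside Theorem~\ref{mut lam} is the shear-coordinate preservation asserted for the bijection. That assertion reduces to a local check, for each arc of $T$, that an excluded curve of Figure~\ref{exclude curves} contributes the same to the relevant entry of the shear coordinate vector as does its split pair of curves from Figure~\ref{allow intersect}. The split pair visits $\S\setminus\{\gamma\}$ exactly as its excluded progenitor does, and the small allowed intersection at a shared spiral point or inside the boundary monogon cannot produce a nonzero $\pm1$ contribution in either of the local configurations of Figures~\ref{shear fig} or~\ref{shear self-folded fig}. This finite, inspectional case analysis is the only conceivable obstacle, and it is precisely what justifies the author's pre-theorem announcement that the bijection preserves shear coordinates; once granted, the proof is a transport of structure from Theorem~\ref{mut lam}.
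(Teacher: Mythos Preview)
Your proposal is correct and matches the paper's approach exactly. The paper declares Theorems~\ref{mut q-lam} and~\ref{q-lam bij} to be ``immediate from Theorems~\ref{mut lam} and~\ref{lam bij}'' via the shear-coordinate-preserving bijection described just before the statement; you have simply written out that immediate deduction in detail.
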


\begin{theorem}\label{q-lam bij}
Fix a tagged triangulation $T$.
Then the map $L\mapsto\b(T,L)$ is a bijection between integral (resp. rational) quasi-laminations and $\integers^n$ (resp.~$\rationals^n$).
\end{theorem}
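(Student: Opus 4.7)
The plan is to deduce Theorem~\ref{q-lam bij} from Theorem~\ref{lam bij} by exhibiting an explicit shear-coordinate-preserving bijection $\Phi$ between rational (resp.\ integral) unbounded measured laminations and rational (resp.\ integral) quasi-laminations. The construction of $\Phi$ is precisely the one sketched in the paragraph preceding the theorem: each curve of one of the two excluded types in Figure~\ref{exclude curves} is replaced by the corresponding pair of compatible intersecting curves in Figure~\ref{allow intersect}, with the original weight transferred to both replacement curves; then any coincident curves produced in the process are collapsed into a single curve whose weight is the sum of the weights of the coincident copies.

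First I would check that $\Phi$ is well-defined and lands in quasi-laminations: the pairs in Figure~\ref{allow intersect} are indeed compatible in the sense of Definition~\ref{quasi lam def}, compatibility with the other (unchanged) curves of $L$ is automatic since the replacement curves are isotopic deformations of the original inside a small neighborhood of it, and integrality is preserved because replacement duplicates integer weights and summing integers gives integers. Next I would build the inverse: given a quasi-lamination $L'$, locate each compatible pair of the form shown in Figure~\ref{allow intersect}; each such pair is canonically detectable (it is the only way compatible allowable curves are permitted to intersect) and is collapsed back to the single excluded curve of Figure~\ref{exclude curves} carrying the common weight, with the weight-summing convention reversed by weight-splitting when necessary. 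Showing $\Phi$ and this reverse procedure are mutually inverse is a purely local check at each puncture or boundary neighborhood where such a pair/excluded curve can occur.

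Second I would verify that $\Phi$ preserves shear coordinates, i.e.\ $\b(T,\Phi(L))=\b(T,L)$ for every tagged triangulation $T$ and every $L$. By the weighted-sum formula \eqref{each curve separately} and linearity of shear coordinates in the weights, it suffices to compare, for each excluded curve $\lambda$ of Figure~\ref{exclude curves} and each arc $\gamma\in T$, the number $b_\gamma(T,\lambda)$ with the sum of $b_\gamma(T,\lambda_1)+b_\gamma(T,\lambda_2)$ for the two curves $\lambda_1,\lambda_2$ into which $\lambda$ is split. This is an elementary local check at each puncture (for the spiral case) or each boundary segment (for the boundary case), reading off the contributions from Figures~\ref{shear fig} and~\ref{shear self-folded fig}; the key point is that the two replacement curves traverse the same triangles as $\lambda$, so each local $\pm 1$ contribution at an intersection with $\gamma$ is reproduced exactly once across $\lambda_1,\lambda_2$.

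The main potential obstacle is the bookkeeping for coincident replacement curves: a single lamination can contain several excluded curves whose replacements happen to include isotopic copies, and one must be careful that the weight-summing convention does not collapse information in an irreversible way. This is resolved by noting that the two replacement curves attached to a given excluded curve are, together with the puncture or boundary segment they surround, uniquely reconstructible from their isotopy classes and shared weight, so no genuine ambiguity arises. Once $\Phi$ is established as a weight-preserving bijection that preserves shear coordinates, Theorem~\ref{q-lam bij} follows immediately by composing $\Phi^{-1}$ with the bijection $L\mapsto\b(T,L)$ of Theorem~\ref{lam bij}, in both the integral and rational versions.
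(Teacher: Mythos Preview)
Your proposal is correct and follows essentially the same approach as the paper. The paper's proof is simply the sentence ``The following theorems are now immediate from Theorems~\ref{mut lam} and~\ref{lam bij},'' relying on the shear-coordinate-preserving bijection between laminations and quasi-laminations described in the preceding paragraph; your argument is a faithful elaboration of that bijection and the composition with Theorem~\ref{lam bij}.
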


In order for us to understand the mutation fan in terms of quasi-laminations, the surface must have the following key property.

\begin{definition}[\emph{Curve Separation Property}]\label{curve sep def}
A marked surface $(\S,\M)$ has the \newword{Curve Separation Property} if for any incompatible allowable curves $\lambda$ and~$\nu$, there exists a tagged triangulation $T$ (with all arcs tagged plain if $(\S,\M)$ has no boundary components and exactly one puncture) and an arc $\gamma$ in $T$ such that the integers $b_\gamma(T,\lambda)$ and $b_\gamma(T,\nu)$ have strictly opposite signs.
\end{definition}

Definition~\ref{curve sep def} defines the Curve Separation Property entirely in the context of the geometry of $(\S,\M)$.
We now rephrase the property in terms of the mutation fan:
Curves are incompatible exactly when they are separated in the mutation fan.

\begin{prop}\label{FBT compatible}
Suppose $(\S,\M)$ satisfies the Curve Separation Property, let $T$ be any tagged triangulation of $(\S,\M)$ (with all arcs tagged plain if $(\S,\M)$ has no boundary components and exactly one puncture), and let $\Lambda$ be a set of allowable curves.
Then the curves in $\Lambda$ are pairwise compatible if and only if the vectors $\set{\b(T,\lambda):\lambda\in\Lambda}$ are contained in some $B(T)$-cone.
\end{prop}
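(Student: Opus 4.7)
The plan is to translate everything through the correspondence between mutation sequences on $B(T)$ and flip sequences on tagged triangulations. By Proposition~\ref{contained Bcone}, the set $\set{\b(T,\lambda):\lambda\in\Lambda}$ lies in a common $B(T)$-cone if and only if, for every sequence $\k$ of indices in $[n]$, the image $\set{\eta^{B(T)}_\k(\b(T,\lambda)):\lambda\in\Lambda}$ is sign-coherent. By Proposition~\ref{signed adjacency mut}, every such $\k$ corresponds to a flip sequence starting at $T$ (in the once-punctured closed case, staying within the all-plain component of the tagged arc complex, matching the restriction on $T$ in both the hypothesis and the Curve Separation Property), and iterating Theorem~\ref{mut q-lam} gives $\eta^{B(T)}_\k(\b(T,\lambda))=\b(T',\lambda)$ for the resulting tagged triangulation $T'$. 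So the proposition reduces to the statement that $\set{\b(T',\lambda):\lambda\in\Lambda}$ is sign-coherent for every flip-reachable $T'$ if and only if the curves are pairwise compatible.

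The ``only if'' direction is then a direct application of the Curve Separation Property. If some pair $\lambda,\nu\in\Lambda$ is incompatible, the property furnishes a flip-reachable $T'$ (all-plain in the restricted case, hence in the same dual-graph component as $T$) and an arc $\gamma\in T'$ such that $b_\gamma(T',\lambda)$ and $b_\gamma(T',\nu)$ have strictly opposite signs, immediately violating sign-coherence at $T'$.

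For the ``if'' direction, since sign-coherence of a set is the same as sign-coherence of every pair, I fix compatible $\lambda,\nu\in\Lambda$ and a flip-reachable $T'$, and show that $\b(T',\lambda)$ and $\b(T',\nu)$ are sign-coherent in every coordinate. Compatibility makes $L:=\lambda+\nu$ (weight $1$ on each curve) a valid quasi-lamination, so the additivity $\b(\cdot,L)=\b(\cdot,\lambda)+\b(\cdot,\nu)$ from~\eqref{each curve separately} holds. For any arc $\gamma\in T'$, flipping $T'$ at $\gamma$ to obtain $T''$ and applying Theorem~\ref{mut q-lam} to each of $L$, $\lambda$, and $\nu$ gives
\[\eta^{B(T')}_\gamma\bigl(\b(T',\lambda)+\b(T',\nu)\bigr)=\eta^{B(T')}_\gamma(\b(T',\lambda))+\eta^{B(T')}_\gamma(\b(T',\nu)).\]
A short inspection of the piecewise-linear formula~\eqref{mutation map def}, using that $B(T')$ has no zero row (so some $b_{\gamma j}\ne 0$), shows that additivity of $\eta^{B(T')}_\gamma$ on a single pair forces the $\gamma$-entries of $\b(T',\lambda)$ and $\b(T',\nu)$ not to have strictly opposite signs. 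Ranging over all $\gamma\in T'$ yields the required sign-coherence at $T'$.

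The main obstacle I anticipate is the bookkeeping in the once-punctured closed-surface case, where the dual graph splits into two components and one must confirm that starting from $T$ all-plain, every matrix mutation sequence is realized as a flip sequence within the all-plain component, matching the Curve Separation Property's restriction. The analytic core---converting additivity of $\eta_\gamma^{B(T')}$ on two vectors into sign-coherence of their $\gamma$-entries via the explicit formula in~\eqref{mutation map def}---is a brief case check made possible by the no-zero-row property of signed adjacency matrices.
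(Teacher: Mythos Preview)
Your argument is correct. The ``incompatible $\Rightarrow$ not in a common $B(T)$-cone'' direction matches the paper's proof exactly: both invoke the Curve Separation Property to produce a tagged triangulation $T'$ flip-reachable from $T$ at which the shear coordinates of the offending pair have strictly opposite signs in some entry, then translate back via Theorem~\ref{mut q-lam} and Proposition~\ref{contained Bcone}.

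For the ``compatible $\Rightarrow$ in a common $B(T)$-cone'' direction, however, you take a genuinely different route. The paper argues geometrically: for compatible $\lambda,\nu$ and any $T'$, it passes to an ordinary triangulation $T^\circ$, examines the quadrilateral $Q$ formed by the two triangles adjacent to $\gamma$, and matches connected components of $\lambda\cap Q$ with those of $\nu\cap Q$. Because compatible curves either do not intersect or differ only in one spiral direction, the components correspond (with at most one exceptional pair near a shared spiral point), and the exceptional pair cannot contribute opposite signs. Your argument instead exploits that $L=\lambda+\nu$ is itself a quasi-lamination, so Theorem~\ref{mut q-lam} applies to $L$ as well as to $\lambda$ and $\nu$ separately, forcing $\eta_\gamma^{B(T')}$ to be additive on the pair $(\b(T',\lambda),\b(T',\nu))$; then the explicit formula~\eqref{mutation map def} together with the no-zero-row property of $B(T')$ converts this additivity into sign agreement in the $\gamma$-entry. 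Your approach is shorter and avoids the case analysis inside the quadrilateral, at the cost of invoking the structural fact that signed adjacency matrices have no zero rows; the paper's approach is more self-contained and makes the geometric reason for sign-coherence visible, which is consonant with the hands-on style used later in Section~\ref{curve separation sec}.
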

\begin{proof}
Suppose $\lambda$ and $\nu$ are not compatible.
Then the Curve Separation Property says that there exists a tagged triangulation $T'$ (with all arcs tagged plain if $(\S,\M)$ has no boundary components and exactly one puncture) and an arc $\gamma$ in $T'$ such that the integers $b_\gamma(T',\lambda)$ and $b_\gamma(T',\nu)$ have strictly opposite signs.
Then $T'$ is obtained from $T$ by a sequence of flips, so Theorem~\ref{mut q-lam} implies that $\b(T',\lambda)$ and $\b(T',\nu)$ are obtained by applying some mutation map to $\b(T,\lambda)$ and $\b(T,\nu)$.
By Proposition~\ref{contained Bcone}, $\b(T,\lambda)$ and $\b(T,\nu)$ are not contained in any common $B(T)$-cone.

Conversely, let $\lambda$ and $\nu$ be compatible, allowable curves and let $\gamma$ be a tagged arc in a tagged triangulation $T'$.
We will show that $b_\gamma(T',\lambda)$ and $b_\gamma(T',\nu)$ weakly agree in sign.

As in Definition~\ref{shear def}, removing tags and switching spirals on $\lambda$ and $\nu$, we pass from $T'$ to a triangulation $T^\circ$ where $\gamma$ is contained in two distinct triangles.
Let $Q$ be the interior of the quadrilateral that is the union of these two triangles.
We refer to Figure~\ref{shear fig}, where $\gamma$ is represented by a purple arc and where $Q$ is shaded light purple.
(Possibly some of the edges or vertices of $Q$ are identified with each other.)
We see that $b_\gamma(T,\lambda)$ and $b_\gamma(T,\nu)$ do not have opposite signs unless $\lambda$ and $\nu$ intersect, so suppose they intersect.
Since $\lambda$ and $\nu$ are compatible, they are identical except at one end, where they spiral opposite ways into a puncture $p$.
Consider the connected components of $\lambda\cap Q$ and the connected components of $\nu\cap Q$.
Since $\lambda$ and $\nu$ are identical except where they spiral opposite ways into~$p$, each connected component of $\lambda\cap Q$ corresponds to a connected component of $\nu\cap Q$ that intersects $Q$ in the same way, with \emph{one possible exception}:
If $p$ is a vertex of the quadrilateral, there may be one connected component $\lambda'$ of $\lambda\cap Q$ that disagrees with the corresponding component $\nu'$ of $\nu\cap Q$.
But this disagreement can only be slight:
Orienting $\lambda$ towards $p$, the component $\lambda'$ enters $Q$ through some edge of the quadrilateral and leaves $Q$ through some other edge.
Orienting $\nu$ towards $p$ as well, the component $\nu'$ enters $Q$ through the same edge as $\lambda'$ does.
Thus it is impossible for $\lambda'$ and $\nu'$ to contribute strictly opposite signs to the shear coordinates of $\lambda$ and $\nu$.
If either of them (say $\lambda'$) makes a nonzero contribution $\ep$, then since $\lambda$ is not self-intersecting, all components of $\lambda\cap Q$ make contributions weakly agreeing in sign with $\ep$.
Therefore also all components of $\nu\cap Q$ make contributions weakly agreeing in sign with $\ep$, so $b_\gamma(T',\lambda)$ and $b_\gamma(T',\nu)$ weakly agree in sign.

Now if $\Lambda$ is a set of pairwise compatible curves, then $b_\gamma(T',\lambda)$ and $b_\gamma(T',\nu)$ weakly agree in sign for any pair $\lambda,\nu\in\Lambda$.
Therefore, $\set{b_\gamma(T',\lambda):\lambda\in\Lambda}$ all weakly agree in sign for any $T'$.
By Theorem~\ref{mut q-lam}, applying any mutation map $\eta_\k^{B(T)}$ to $\set{\b(T,\lambda):\lambda\in\Lambda}$ produces a sign-coherent set of vectors.
Proposition~\ref{contained Bcone} says that $\set{\b(T,\lambda):\lambda\in\Lambda}$ is contained in some $B(T)$-cone.
\end{proof}

\begin{remark}\label{all arcs tagged plain}
The requirement that all arcs be tagged plain if $(\S,\M)$ has no boundary components and exactly one puncture in Definition~\ref{curve sep def}, in Proposition~\ref{FBT compatible}, and in what follows arises from the fact that the tagged arc complex has two connected components in this case.
(See Definition~\ref{complex and flips}.)
The requirement is crucial in the proof of one direction of Proposition~\ref{FBT compatible}, where it ensures that the tagged triangulations $T$ and $T'$ are related by a sequence of flips.
This requirement is also the reason that, in Section~\ref{curve separation sec}, we are not able to establish the Curve Separation Property for $(\S,\M)$ having no boundary components and exactly one puncture.
\end{remark}

We now define a rational fan $\F_\rationals(T)$, based on quasi-laminations, which is closely related to the mutation fan $\F_{B(T)}$.
Specifically, $\F_\rationals(T)$ is the rational part of $\F_{B(T)}$, in a sense that we make precise below.

\begin{definition}[\emph{Rational quasi-lamination fan}]\label{rat q-lam fan def}
Fix a tagged triangulation $T$.
Let $\Lambda$ be a set of pairwise compatible allowable curves in the sense of Definition~\ref{quasi lam def}.
Let $C_\Lambda$ be the nonnegative $\reals$-linear span of the integer vectors $\set{\b(T,\lambda):\lambda\in\Lambda}$.
Let $\F_\rationals(T)$ be the collection of all such cones $C_\Lambda$.
We call $\F_\rationals(T)$ the \newword{rational quasi-lamination fan} for $T$ and justify the name below in Theorem~\ref{rat FB surfaces}.
\end{definition}

The following is a specialization of \cite[Definition~6.9]{universal}.  %\margin{This is Def. {R-part def}.}
\begin{definition}[\emph{Rational part of a fan}]\label{rat part def}
Suppose $\F$ and $\F'$ are fans such that:
\begin{enumerate}[(i)]
\item \label{rat cones rat part}
$\F'$ is a rational fan.
\item \label{cone in cone rat part}
Each cone in $\F'$ is contained in a cone of $\F$.
\item \label{max cone rat part}
For each cone $C$ of $\F$, there is a unique largest cone (under containment) among cones of $\F'$ contained in $C$.
This largest cone contains $\rationals^n\cap C$.
\end{enumerate}
Then $\F'$ is called the \newword{rational part} of $\F$.
If the rational part of $\F$ exists, then it is unique.
\end{definition}

\begin{theorem}\label{rat FB surfaces}
Let $T$ be any tagged triangulation of $(\S,\M)$ (with all arcs tagged plain if $(\S,\M)$ has no boundary components and exactly one puncture).
The collection $\F_\rationals(T)$ is a rational, simplicial fan.
The fan $\F_\rationals(T)$ is the rational part of $\F_{B(T)}$ if and only if $(\S,\M)$ has the Curve Separation Property.
\end{theorem}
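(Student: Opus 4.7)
The plan is to prove the two halves separately: first that $\F_\rationals(T)$ is a rational simplicial fan (with no appeal to the Curve Separation Property), then that it is the rational part of $\F_{B(T)}$ exactly when the Curve Separation Property holds.

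For the fan statement, rationality of each $C_\Lambda$ is immediate from the integrality of shear coordinates. Simpliciality follows from Theorem~\ref{q-lam bij}: any rational linear dependence among $\{\b(T,\lambda):\lambda\in\Lambda\}$, split into its positive and negative parts, would exhibit two distinct integer quasi-laminations (supported on the disjoint subsets of $\Lambda$ and hence automatically pairwise compatible) with the same shear coordinate vector. Face-closure is clear, since a face of the simplicial cone $C_\Lambda$ has the form $C_{\Lambda'}$ for $\Lambda'\subseteq\Lambda$. The key remaining step is to show $C_\Lambda\cap C_{\Lambda'}=C_{\Lambda\cap\Lambda'}$; the $\supseteq$ direction is trivial, and for the reverse I would first handle rational points by clearing denominators and invoking Theorem~\ref{q-lam bij} to see that a rational point in the intersection is the shear coordinate vector of a unique quasi-lamination whose support must lie in both $\Lambda$ and $\Lambda'$. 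Since $C_\Lambda\cap C_{\Lambda'}$ is a rational polyhedral cone, its rational points are dense and the equality extends to all of $C_\Lambda\cap C_{\Lambda'}$.

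For the ``only if'' direction, let $\lambda,\nu$ be incompatible allowable curves and suppose for contradiction that $\b(T,\lambda),\b(T,\nu)$ lie in a common $B(T)$-cone $C$. Both $C_{\{\lambda\}},C_{\{\nu\}}$ are members of $\F_\rationals(T)$ contained in $C$, so by condition (iii) of Definition~\ref{rat part def} they both lie in the largest $\F_\rationals(T)$-cone $C_\Lambda\subseteq C$. Expressing $\b(T,\lambda)$ in the unique nonnegative (necessarily rational) combination of generators of $C_\Lambda$ and applying Theorem~\ref{q-lam bij} forces $\lambda\in\Lambda$, and similarly $\nu\in\Lambda$, contradicting pairwise compatibility of $\Lambda$. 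Therefore $\b(T,\lambda)$ and $\b(T,\nu)$ lie in different $B(T)$-cones, and Proposition~\ref{contained Bcone} produces a sequence $\k$ for which $\eta_\k^{B(T)}(\b(T,\lambda))$ and $\eta_\k^{B(T)}(\b(T,\nu))$ fail to be sign-coherent. Iterated application of Theorem~\ref{mut q-lam} rewrites these images as $\b(T',\lambda)$ and $\b(T',\nu)$ for a tagged triangulation $T'$ obtained from $T$ by flips; in the exceptional once-punctured case, starting from an all-plain $T$ keeps $T'$ all-plain by Theorem~\ref{cluster main}. This yields an arc $\gamma$ of $T'$ at which the shear coordinates have strictly opposite signs, giving the Curve Separation Property.

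For the ``if'' direction, assume the Curve Separation Property; conditions (i) and (ii) follow, respectively, from the first half of the theorem and from the ``compatible implies common $B(T)$-cone'' direction of Proposition~\ref{FBT compatible} (which, as its proof shows, does not need the Curve Separation Property). For (iii), given a $B(T)$-cone $C$ I would set $\Lambda_C=\{\lambda:\b(T,\lambda)\in C\}$; the converse direction of Proposition~\ref{FBT compatible}, which does use the Curve Separation Property, makes $\Lambda_C$ pairwise compatible, so $C_{\Lambda_C}\in\F_\rationals(T)$, and maximality among $\F_\rationals(T)$-cones in $C$ is built into the definition of $\Lambda_C$. The main obstacle is the inclusion $\rationals^n\cap C\subseteq C_{\Lambda_C}$: given a rational $p\in C$, Theorem~\ref{q-lam bij} writes $p$ as the shear coordinate vector of a rational quasi-lamination $L$ with strictly positive weights on compatible curves $\lambda_1,\ldots,\lambda_m$; the simplicial cone $C_L$ sits in some $B(T)$-cone $C'$ by the easy direction of Proposition~\ref{FBT compatible}, and $p$ lies in the relative interior of $C_L$. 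The intersection $C\cap C'$ is a face of $C'$ containing $p$, and the standard property of faces of convex cones (if a strictly positive combination of vectors of the ambient cone lands in a face, each summand lies in that face) forces every generator $\b(T,\lambda_i)$ into $C\cap C'\subseteq C$, so each $\lambda_i\in\Lambda_C$ and $p\in C_{\Lambda_C}$. This face-of-a-cone step is the technical heart of the argument and is where one has to use strict positivity of the weights together with linear independence of the generators of $C_L$ most carefully.
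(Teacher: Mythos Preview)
Your proof is correct and follows the paper's argument closely for the fan assertion, the ``only if'' direction, and the setup of condition~(iii). The one genuine difference is in showing that $C_{\Lambda_C}$ contains every rational point of a $B(T)$-cone $C$. The paper argues by induction on $\dim C$: writing a rational $p\in C$ as the shear coordinates of a quasi-lamination supported on a cone $D\subseteq C'$ for some $B(T)$-cone $C'$, it either has $C'=C$ (done) or applies the inductive hypothesis to the lower-dimensional $B(T)$-cone $C\cap C'$ (using Proposition~\ref{int of B-cones}). Your argument instead observes that $C\cap C'$ is a \emph{face} of $C'$ (by Theorem~\ref{fan}, since both are cones of $\F_{B(T)}$) and uses the standard face property---a strictly positive combination of vectors of $C'$ lying in a face forces each summand into that face---to push every generator $\b(T,\lambda_i)$ into $C$. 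Your route avoids the induction and is arguably more direct; the paper's route makes the role of Proposition~\ref{int of B-cones} explicit but needs the dimension drop. Both are valid, and neither requires machinery the other lacks.
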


\begin{proof}
Suppose that some cone $C_\Lambda$ is not simplicial.
Then there is some rational point in $C_\Lambda$ that can be written as a $\rationals$-linear combination of the vectors $(\b(T,\lambda):\lambda\in\Lambda)$ in two distinct ways.
This point is thus obtained as the shear coordinates of two distinct rational quasi-laminations, contradicting Theorem~\ref{q-lam bij}.

Let $C_\Lambda$ and $C_{\Lambda'}$ be cones in $\F_\rationals(T)$.
Then $C_\Lambda\cap C_{\Lambda'}$ contains $C_{\Lambda\cap\Lambda'}$. 
Suppose $\v$ is a rational vector in $C_\Lambda\cap C_{\Lambda'}$.
Then $\v$ is the shear coordinates of some quasi-lamination supported on a set $\tilde\Lambda$ of compatible curves.
By Theorem~\ref{q-lam bij}, we conclude that $\tilde\Lambda$ is contained in both $\Lambda$ and $\Lambda'$.
Thus $C_\Lambda\cap C_{\Lambda'}\cap\rationals^n\subseteq C_{\Lambda\cap\Lambda'}$.
Since $C_\Lambda\cap C_{\Lambda'}$ is a rational cone, $C_\Lambda\cap C_{\Lambda'}\subseteq C_{\Lambda\cap\Lambda'}$, so that $C_\Lambda\cap C_{\Lambda'}= C_{\Lambda\cap\Lambda'}$.
We have shown that $\F_\rationals(T)$ is a simplicial fan, and it is rational by construction.

If $(\S,\M)$ does not have the Curve Separation Property, then Theorem~\ref{mut q-lam} and Proposition~\ref{contained Bcone} imply that there exist incompatible allowable curves $\lambda$ and $\nu$ with $\b(T,\lambda)$ and $\b(T,\nu)$ contained in some $B$-cone $C$.
But $\b(T,\lambda)$ and $\b(T,\nu)$ both span rays of $\F_\rationals(T)$ not contained in a common cone of $\F_\rationals(T)$.
Thus $\F_\rationals(T)$ fails condition~\eqref{max cone rat part} of Definition~\ref{rat part def}.

Now suppose $(\S,\M)$ has the Curve Separation Property.
Condition~\eqref{cone in cone rat part} of Definition~\ref{rat part def} holds by Proposition~\ref{FBT compatible}.
We now establish condition~\eqref{max cone rat part} of Definition~\ref{rat part def}.
Given a $B(T)$-cone $C$, let $\Lambda$ be the set of allowable curves $\lambda$ whose shear coordinates $\b(T,\lambda)$ are contained in $C$.
Proposition~\ref{FBT compatible} implies that $C_\Lambda$ is the unique largest cone among cones of $\F_\rationals(T)$ contained in $C$.

It remains to show that this $C_\Lambda$ contains every rational point in $C$.
We argue by induction on the dimension of $C$.
The assertion is trivial when $C$ is the zero cone.
Suppose $\a$ is in $\rationals^n\cap C$.
Then $\a$ is the shear coordinates of some rational quasi-lamination by Theorem~\ref{q-lam bij}.
The shear coordinates of the individual curves in this quasi-lamination span a cone $D$ in $\F_\rationals(T)$.  
The cone $D$ is contained in some $B(T)$-cone $C'$.
If $C'=C$, then $D$ is contained in the largest cone $C_{\Lambda}$ of $\F_\rationals(T)$ in~$C$.
Otherwise, $\a\in C\cap C'$, which is a $B(T)$-cone by Proposition~\ref{int of B-cones} and whose dimension is lower than the dimension of $C$.
Let $D'$ be the largest cone of $\F_\rationals(T)$ contained in $C\cap C'$.
By induction, $\a\in D'$.
But now $D'$ is contained in the largest cone $C_{\Lambda}$ of $\F_\rationals(T)$ in $C$.
In either case, $\a$ is in $C_\Lambda$.
\end{proof}

More generally than Definition~\ref{rat part def}, \cite[Definition~6.9]{universal} defines the $R$-part of a fan for an underlying ring $R$, and \cite[Proposition~6.10]{universal} % \margin{This is Prop. {R part prop}.}
establishes several facts about the $R$-part of a fan.  
The following corollary is obtained from \cite[Proposition~6.10]{universal} by setting $R=\rationals$ and appealing to Theorem~\ref{rat FB surfaces}.

\begin{cor}\label{R part prop}
Suppose $(\S,\M)$ has the Curve Separation Property.
Let $T$ be any tagged triangulation of $(\S,\M)$ (with all arcs tagged plain if $(\S,\M)$ has no boundary components and exactly one puncture).
Each rational $B(T)$-cone is a cone in $\F_\rationals(T)$.
The full-dimensional $B(T)$-cones are exactly the full-dimensional cones in~$\F_\rationals(T)$.
\end{cor}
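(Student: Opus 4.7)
The plan is to combine Theorem~\ref{rat FB surfaces} with the general structural fact \cite[Proposition~6.10]{universal} about the $R$-part of a fan, specialized to $R=\rationals$, exactly as the statement suggests. First I would invoke the Curve Separation Property together with Theorem~\ref{rat FB surfaces} to assert that $\F_\rationals(T)$ \emph{is} the rational part of $\F_{B(T)}$ in the sense of Definition~\ref{rat part def}, so that all three of conditions (i)--(iii) are available.

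For the first claim, let $C$ be a rational $B(T)$-cone. By condition (iii) of Definition~\ref{rat part def} there is a largest cone $D\in\F_\rationals(T)$ with $D\subseteq C$, and $D\supseteq\rationals^n\cap C$. Since $C$ is a rational cone, $\rationals^n\cap C$ is dense in $C$; since $D$ is closed in $\reals^n$ and contained in $C$, the inclusion $\rationals^n\cap C\subseteq D$ forces $D=C$. Thus $C\in\F_\rationals(T)$.

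For the second claim I would argue in two directions. In one direction, condition (ii) gives that any cone of $\F_\rationals(T)$ lies in some $B(T)$-cone; a dimension count then forces a full-dimensional cone of $\F_\rationals(T)$ to lie in a full-dimensional $B(T)$-cone, and the density-plus-closedness argument from the previous paragraph upgrades this containment to equality (here one uses that full-dimensional convex cones contain dense sets of rational points). Conversely, if $C$ is a full-dimensional $B(T)$-cone, then $\rationals^n\cap C$ is full-dimensional and dense in $C$, so the largest cone $D\in\F_\rationals(T)$ inside $C$ (provided by condition (iii)) is full-dimensional, and is equal to $C$ by the same closedness argument.

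There is essentially no obstacle to overcome: once Theorem~\ref{rat FB surfaces} is cited, both statements reduce to the elementary observation that rational points are dense in any full-dimensional (in particular, in any full-dimensional rational) convex cone, combined with the closedness of cones in $\F_\rationals(T)$. This is precisely why the result is presented as a corollary of the general proposition rather than proved from scratch.
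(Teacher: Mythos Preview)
Your proposal is correct and follows essentially the same route as the paper: you invoke Theorem~\ref{rat FB surfaces} to identify $\F_\rationals(T)$ as the rational part of $\F_{B(T)}$, and then extract the two conclusions from the defining conditions of Definition~\ref{rat part def}, which is exactly the content of \cite[Proposition~6.10]{universal} specialized to $R=\rationals$. The only difference is that the paper cites that proposition as a black box whereas you spell out the density-and-closedness argument; one small point to tighten in your forward direction is that condition~(iii) gives $\rationals^n\cap C$ inside the \emph{largest} cone $D'$ of $\F_\rationals(T)$ in $C$, so you should first conclude $D'=C$ and then use that $D\subseteq D'$ are cones of the same fan with $D$ full-dimensional to get $D=D'$.
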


The Curve Separation Property has important consequences for finding positive $\integers$- and $\rationals$-bases for~$B(T)$.
\begin{theorem}\label{curves pos basis}
Suppose $(\S,\M)$ has the Curve Separation Property.
Let $R$ be $\integers$ or $\rationals$ and fix a tagged triangulation $T$ (with all arcs tagged plain if $(\S,\M)$ has no boundary components and exactly one puncture).
\begin{enumerate}
\item \label{indep allowable}
If shear coordinates $\b(T,\lambda)$ of allowable curves $\lambda$ form an $R$-independent set for $B(T)$, then they form a positive $R$-basis for~$B(T)$.
\item \label{basis allowable}
If a positive $R$-basis exists for $B(T)$, then the shear coordinates of allowable curves are a positive $R$-basis.
\end{enumerate}
\end{theorem}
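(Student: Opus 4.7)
For part (1), the plan is to apply Proposition~\ref{positive cone basis}. Given the hypothesized $R$-independence, it suffices to verify that for each $B(T)$-cone $C$ and each $\a\in R^n\cap C$, the vector $\a$ is a nonnegative $R$-combination of those $\b(T,\lambda)$ that lie in $C$. By Theorem~\ref{q-lam bij}, $\a=\b(T,L)$ for a unique quasi-lamination $L=\sum w_i\nu_i$ with positive $R$-weights. Proposition~\ref{FBT compatible}, invoking Curve Separation, places $\{\b(T,\nu_i)\}$ in a common $B(T)$-cone and makes $C_{\{\nu_i\}}$ a simplicial cone of $\F_\rationals(T)$ whose relative interior contains $\a$ by strict positivity of weights. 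The proof of Theorem~\ref{rat FB surfaces} shows $\a$ also lies in the largest cone $C_{\Lambda_C}$ of $\F_\rationals(T)$ contained in $C$; simpliciality forces $C_{\{\nu_i\}}$ to be a face of $C_{\Lambda_C}$, giving each $\nu_i\in\Lambda_C$ and each $\b(T,\nu_i)\in C$, so $\a=\sum w_i\b(T,\nu_i)$ is the desired decomposition.

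For part (2), let $(\b_i:i\in I)$ be a positive $R$-basis. The first step shows $\b_i=w_i\b(T,\lambda_i)$ for a single allowable curve $\lambda_i$ and positive $w_i\in R$. Writing $\b_i=\b(T,L_i)$ via Theorem~\ref{q-lam bij}, with $L_i=\sum_\lambda w_{i,\lambda}\lambda$, the pairwise compatibility of curves in $L_i$ (Proposition~\ref{FBT compatible}) places their shear coordinates, and hence $\b_i$, in a common $B(T)$-cone $C_i$. Linearity of $\eta_\k^{B(T)}$ on $C_i$ (Proposition~\ref{linear}), combined with Proposition~\ref{no zero no piecewise} (since $B(T)$ has no zero rows), makes $\b_i-\sum_\lambda w_{i,\lambda}\b(T,\lambda)$ a $B(T)$-coherent relation. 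Expressing each $\b(T,\lambda)$ in the support of $L_i$ via the positive basis $(\b_i)$ and substituting yields a $B(T)$-coherent relation purely among basis elements; $R$-independence of $(\b_i)$ then forces, through the nonnegativity of all intermediate weights, each such $\b(T,\lambda)$ to be a nonnegative $R$-multiple of $\b_i$ alone. Non-proportionality of distinct shear coordinates (Theorem~\ref{q-lam bij}) reduces $L_i$ to a single curve $\lambda_i$, giving $\b_i=w_i\b(T,\lambda_i)$.

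To complete part (2), I show $\{\b(T,\lambda_i):i\in I\}$ is itself a positive $R$-basis and that $\{\lambda_i\}$ is the entire set of allowable curves. The positive basis property transfers from $(\b_i)$ via the scaling $\b_i=w_i\b(T,\lambda_i)$, so $\{\b(T,\lambda_i)\}$ is $R$-independent and positively spanning. For the exhaustion, Curve Separation combined with Corollary~\ref{R part prop} and the simpliciality of $\F_\rationals(T)$ (Theorem~\ref{rat FB surfaces}) forces $\F_{B(T)}=\F_\rationals(T)$ as complete simplicial fans, whose rays are precisely $\{\reals_{\geq 0}\b(T,\mu):\mu\text{ allowable}\}$. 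Each basis element lies on such a ray (by Step 1), $R$-independence forbids two basis elements on one ray (they would be proportional, yielding a trivial but nontrivial $B(T)$-coherent relation), and positivity applied to a primitive shear coordinate on any ray forces that ray to carry a basis element---for the unique positive $B(T)$-coherent decomposition in the basis must be compatible with the simplicial structure of the fan, and a missing basis element would leave no consistent way to positively span the primitive vector on that ray. Thus the basis is in bijection with the rays, and by Proposition~\ref{positive unique} the shear coordinates of allowable curves coincide with $(\b_i)$ up to positive unit scaling and form a positive $R$-basis.

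The main obstacle is the final step of part (2): showing that every ray of $\F_{B(T)}$ carries a basis element, equivalently that every allowable curve appears as some $\lambda_i$. This requires a careful interplay between the $B(T)$-coherence of the positive decomposition and the simplicial fan structure supplied by Curve Separation, so as to rule out the possibility that a primitive shear coordinate could be expressed as a nontrivial positive $B(T)$-coherent combination of basis elements lying on other rays.
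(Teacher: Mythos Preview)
Your argument for part~(1) is correct and follows the paper's approach: both verify the hypothesis of Proposition~\ref{positive cone basis} by invoking the conclusion of Theorem~\ref{rat FB surfaces} that the largest cone of $\F_\rationals(T)$ inside a $B(T)$-cone $C$ contains $R^n\cap C$. The paper simply cites this; you unpack it.

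Part~(2), however, has a genuine gap, and the paper's route is far shorter. The paper invokes Proposition~\ref{FB basis ray} directly: if a positive $R$-basis exists, it consists of exactly one vector (the smallest integer one, when $R=\integers$) in each ray of the \emph{rational part} of $\F_{B(T)}$. By Theorem~\ref{rat FB surfaces} under the Curve Separation Property that rational part is $\F_\rationals(T)$, whose rays are exactly the rays through the $\b(T,\lambda)$, and Theorem~\ref{q-lam bij} ensures each $\b(T,\lambda)$ is primitive. That is the whole proof.

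Your attempt instead tries to re-derive the content of Proposition~\ref{FB basis ray} from scratch, and in doing so asserts that ``$\F_{B(T)}=\F_\rationals(T)$ as complete simplicial fans.'' This is not what Corollary~\ref{R part prop} says, and in general it is false: $\F_\rationals(T)$ is only the \emph{rational part} of $\F_{B(T)}$, and $\F_{B(T)}$ may contain irrational $B$-cones (and hence rays) that lie entirely outside $\F_\rationals(T)$. (The remark in Example~\ref{affine A2 surface} that ``$\F_\rationals(T)$ is complete in this case, so it coincides with $\F_{B(T)}$'' signals that this coincidence is special.) Once that identification fails, your ray-counting argument collapses: you cannot conclude that the basis vectors, which a priori sit on rays of $\F_{B(T)}$, are in bijection with the rays of $\F_\rationals(T)$. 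You yourself flag the final step as the ``main obstacle,'' and indeed the handwave about a ``missing basis element leaving no consistent way to positively span'' is exactly the nontrivial content of Proposition~\ref{FB basis ray}. Rather than reproving it, cite it.
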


To prove Theorem~\ref{curves pos basis}, we use the following specialization of \cite[Corollary~6.12]{universal}.
The first part of Proposition~\ref{FB basis ray} as stated in \cite[Corollary~6.12]{universal} refers to the $\integers$-part (as in \cite[Definition~6.9]{universal}), not the rational part, but these two fans coincide.

\begin{prop}\label{FB basis ray}
If a positive $\integers$-basis exists for $B$, then the unique positive $\integers$-basis for $B$ consists of the smallest nonzero integer vector in each ray of the rational part of $\F_B$.
If a positive $\rationals$-basis exists for $B$, then a collection of vectors is a positive $\rationals$-basis for $B$ if and only if it consists of exactly one nonzero vector in each ray of the rational part of $\F_B$.
\end{prop}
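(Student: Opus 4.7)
The overall strategy is to show that a positive $R$-basis (for $R = \integers$ or $\rationals$) must consist of exactly one vector on each ray of the rational part of $\F_B$, using positivity and $R$-independence jointly; the $\integers$-case then gets a further primitivity argument. I would first establish a sign-extraction lemma: \emph{if $\v = \sum_k c_k \b_k$ is a positive $B$-coherent relation (all $c_k \geq 0$), then $\set{\v} \cup \set{\b_k : c_k > 0}$ is contained in some $B$-cone.} For a fixed $\k$ and coordinate $j$, if $(\eta_\k^B(\v))_j \geq 0$, then \eqref{piecewise eta} reads $\sum_k c_k \min((\eta_\k^B(\b_k))_j, 0) = 0$, and with $c_k \geq 0$ this forces $(\eta_\k^B(\b_k))_j \geq 0$ whenever $c_k > 0$; the case $(\eta_\k^B(\v))_j \leq 0$ is symmetric. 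Proposition~\ref{contained Bcone} then gives the common $B$-cone.

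Granted the lemma, I would proceed through three key steps. \emph{Every ray $\rho$ of the rational part of $\F_B$ contains some $\b_i$}: pick a rational $\v \in \rho$ and write $\v = \sum c_k \b_k$ positively; by the lemma, all $\b_k$ with $c_k > 0$ lie in a single $B$-cone $C$ containing $\v$. Choosing $C$ to be the minimal $B$-cone containing $\v$ (which exists by Proposition~\ref{int of B-cones}), one argues that $\rho$ is an extreme ray of $C$, so each $\b_k$ with $c_k > 0$ lies on $\rho$. \emph{Each $\b_i$ lies on some ray of the rational part}: if $\b_i$ were in the relative interior of a higher-dimensional face $F$ of $\F_\rationals$, then writing $\b_i = \sum_j t_j \v_j$ with $t_j > 0$ and $\v_j$ rational vectors in the various rays of $F$, and expanding each $\v_j$ positively using the previous step, would yield a $B$-coherent relation $\b_i - \sum_k d_k \b_k = 0$ in which $\b_i$ has coefficient $1$ but no $d_k$ multiplies $\b_i$; $B$-coherence holds because all vectors involved sit in a single $B$-cone containing $F$, on which $\eta_\k^B$ is linear (Proposition~\ref{linear}) and coordinate-wise $\mathbf{min}(\eta_\k^B(\cdot),\mathbf{0})$ is linear too, so \eqref{linear eta} and \eqref{piecewise eta} follow from the $\reals^n$-identity. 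This contradicts $R$-independence. \emph{Two basis elements cannot share a ray}: otherwise $\b_i - \lambda \b_j = 0$ with $\lambda > 0$ is trivially $B$-coherent.

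The $\rationals$-converse, that any system of one nonzero rational vector per ray is a positive $\rationals$-basis, I would derive from Proposition~\ref{positive cone basis}: the argument of the middle step shows that $\F_\rationals$ is simplicial (linear dependencies among vectors on the rays of a single $B$-cone are automatically $B$-coherent and so trivial by $\rationals$-independence), hence nonnegative rational spans of the chosen vectors inside a $B$-cone cover that cone's rational points, and $\rationals$-independence reduces via the sign-extraction lemma to linear independence inside each $B$-cone. For the $\integers$-statement, the three steps show any positive $\integers$-basis has one vector per ray; the smallest nonzero integer vector $\v \in \rho$ is expressed as $\v = m\b_i$ with $m \in \integers_{>0}$, forcing $m = 1$ and $\v = \b_i$; uniqueness (Proposition~\ref{positive unique}) then ties down the basis as the primitive integer vectors on the rays.

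The main obstacle I anticipate is the $B$-coherence verification in the middle step: the linear part \eqref{linear eta} is immediate from Proposition~\ref{linear}, but the piecewise part \eqref{piecewise eta} requires the observation that on a fixed $B$-cone, $\mathbf{sgn}(\eta_\k^B(\cdot))$ is constant on the relative interior and extends continuously to the closure, making $\mathbf{min}(\eta_\k^B(\cdot),\mathbf{0})$ a genuine linear function of the input—so $B$-coherence of the relation reduces to its correctness as an ordinary linear identity in $\reals^n$.
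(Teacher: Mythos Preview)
This proposition is not proved in the present paper; it is quoted as a specialization of \cite[Corollary~6.12]{universal}, so there is no in-paper argument to compare against directly.

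Your overall strategy is sound and your sign-extraction lemma is both correct and the natural key ingredient.  One step is not quite right as written.  In your first key step you say ``Choosing $C$ to be the minimal $B$-cone containing $\v$ \ldots\ one argues that $\rho$ is an extreme ray of $C$, so each $\b_k$ with $c_k>0$ lies on~$\rho$.''  But the sign-extraction lemma only produces \emph{some} common $B$-cone $C$; there is no reason the $\b_k$ lie in the \emph{minimal} $B$-cone about $\v$, nor that a ray of the rational part is an extreme ray of an arbitrary $B$-cone containing it.  The clean fix uses the rational part itself rather than extremality in $C$: the $\b_k$ with $c_k>0$ are rational points of $C$, hence by condition~\eqref{max cone rat part} of Definition~\ref{rat part def} they lie in the unique largest cone $D$ of the rational part contained in $C$; since $\rho$ is a ray of that fan and $\rho\subseteq D$, the ray $\rho$ is a face of $D$, and then the face property of convex cones (a positive combination landing in a face forces each summand into the face) pushes every $\b_k$ onto $\rho$.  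A second, more structural point: you are tacitly assuming that the rational part of $\F_B$ exists.  In \cite{universal} this is established as a companion result to the corollary in question (from the same hypothesis that a positive $R$-basis exists), so strictly speaking it should either be part of your argument or invoked explicitly.
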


\begin{proof}[Proof of Theorem~\ref{curves pos basis}]
For assertion~\eqref{indep allowable}, we verify the hypothesis of Proposition \ref{positive cone basis}.
Suppose $C$ is a $B(T)$-cone.
By Theorem~\ref{rat FB surfaces}, there is a largest cone $D$ among cones of $\F_\rationals(T)$ contained in $C$ and $D$ contains every point in $R^n\cap C$.
This verifies the hypothesis of Proposition~\ref{positive cone basis} for $R=\rationals$.
The hypothesis for $R=\integers$ follows by the integer case of Theorem~\ref{q-lam bij}.
Assertion~\eqref{basis allowable} is an immediate consequence of Theorem~\ref{rat FB surfaces} and Proposition~\ref{FB basis ray}.
(Theorem~\ref{q-lam bij} implies that the shear coordinate vector of an allowable curve is the shortest integer vector in the ray it spans.)
\end{proof}

\begin{example}\label{affine A2 surface}
Consider an annulus with two marked points on its outer boundary and one marked point on its inner boundary.
The surface admits a triangulation with three arcs, for example as shown in Figure~\ref{a2annulus}.
\begin{figure}[ht]
\scalebox{0.8}{\includegraphics{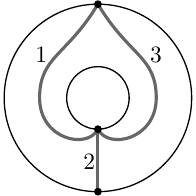}}
\caption{A triangulated annulus}
\label{a2annulus}
\end{figure}
If $T$ is the triangulation shown, then $B(T)$ is the matrix $\begin{bsmallmatrix*}[r]0&1&1\\-1&0&1\\-1&-1&\hspace{6pt}0\\\end{bsmallmatrix*}$.
Some allowable curves in the surface are shown (green and dotted) in Figure~\ref{a2annuluscurves}.
\begin{figure}[ht]
\begin{tabular}{ccccccc}
\scalebox{0.44}{\includegraphics{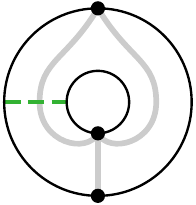}}&
\scalebox{0.44}{\includegraphics{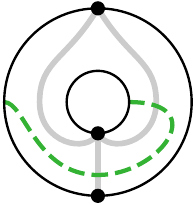}}&
\scalebox{0.44}{\includegraphics{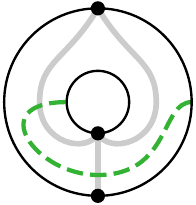}}&
\scalebox{0.44}{\includegraphics{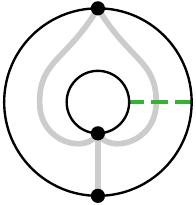}}&
\scalebox{0.44}{\includegraphics{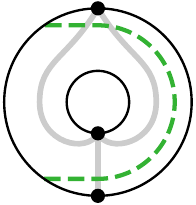}}&
\scalebox{0.44}{\includegraphics{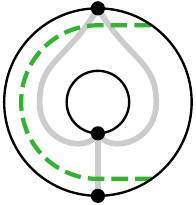}}&
\scalebox{0.44}{\includegraphics{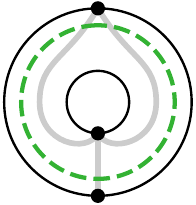}}\\
$\lambda_1$&
$\lambda_2$&
$\lambda_3$&
$\lambda_4$&
$\lambda_+$&
$\lambda_-$&
$\lambda_\infty$\\
$\begin{bsmallmatrix}-1&0&0\end{bsmallmatrix}$&
$\begin{bsmallmatrix}0&1&0\end{bsmallmatrix}$&
$\begin{bsmallmatrix}0&-1&0\end{bsmallmatrix}$&
$\begin{bsmallmatrix}0&0&1\end{bsmallmatrix}$&
$\begin{bsmallmatrix}0&1&-1\end{bsmallmatrix}$&
$\begin{bsmallmatrix}1&-1&0\end{bsmallmatrix}$&
$\begin{bsmallmatrix}1&0&-1\end{bsmallmatrix}$
\end{tabular}
\caption{Some allowable curves in the annulus of Figure~\ref{a2annulus} and their shear coordinates}
\label{a2annuluscurves}
\end{figure}

All remaining allowable curves are related to the curves $\lambda_1$, $\lambda_2$, $\lambda_3$, and $\lambda_4$ as we now explain.
Since each $\lambda\in\set{\lambda_1,\lambda_2,\lambda_3,\lambda_4}$ connects the inner boundary of the annulus to the outer boundary, we can modify $\lambda$ by inserting spirals.
Specifically, for each $n\in\integers$, let $\lambda^{(n)}$ be obtained from $\lambda$ by tracing along $\lambda$ from the inner boundary, inserting $n$ counterclockwise outward spirals, and then continuing along $\lambda$ to the outer boundary.
By convention, $\lambda^{(0)}=\lambda$, and if $n<0$, then $\lambda^{(n)}$ is obtained from $\lambda$ by inserting $-n$ \emph{clockwise} outward spirals.
The curve $\lambda_1^{(2)}$ is illustrated in Figure~\ref{a2annuluscurvespiral}.
\begin{figure}[ht]
\scalebox{.8}{\includegraphics{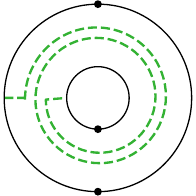}}
\caption{The allowable curve $\lambda_1^{(2)}$}
\label{a2annuluscurvespiral}
\end{figure}
Keeping in mind that allowable curves are considered up to isotopy in $\S$ relative to $\M$, one verifies that $\lambda_1^{(-1)}=\lambda_2$, that $\lambda_2^{(1)}=\lambda_1$, that $\lambda_3^{(-1)}=\lambda_4$, and that $\lambda_4^{(1)}=\lambda_3$.
The set of all allowable curves is 
\[\{\lambda_1^{(n)}:n\ge0\}\cup\{\lambda_2^{(-n)}:n\ge0\}\cup\{\lambda_3^{(n)}:n\ge0\}\cup\{\lambda_4^{(-n)}:n\ge0\}\cup\{\lambda_+,\lambda_-,\lambda_\infty\}.\]

For $i\in\set{1,2,3,4,+,-,\infty}$, let $\v_i\in\integers^3$ be the shear coordinates of $\lambda_i$ with respect to the triangulation shown in Figure~\ref{a2annulus}.
These shear coordinates are also indicated in Figure~\ref{a2annuluscurves}.
%Also, write $\v_i^{(n)}$ for the shear coordinates of 
The shear coordinates of $\lambda_1^{(n)}$ for $n\ge0$ are $\v_1+n\v_\infty$.
The shear coordinates of $\lambda_2^{(-n)}$ for n$\ge0$ are $\v_2+n\v_\infty$, and similar descriptions hold for shear coordinates of $\lambda_3^{(n)}$ and $\lambda_4^{(-n)}$.
The set of all shear coordinates of allowable curves consists of four infinite families $\{\v_i+n\v_\infty:n\ge0\}$ for $i=1,2,3,4$ together with the vectors $\v_+$, $\v_-$, and $\v_\infty$.

We will see in Theorem~\ref{curve separation} that this surface has the Curve Separation Property.
%Alternately, the Curve Separation Property is easily checked directly in this case.
Thus Theorem~\ref{rat FB surfaces} says that $\F_\rationals(T)$ is the rational part of $\F_B$.
In fact, $\F_\rationals(T)$ is complete in this case, so it coincides with $\F_{B(T)}$.
The shear coordinates of allowable curves span the rays of $\F_B$, and the other faces of $\F_B$ are spanned by sets of pairwise compatible allowable curves.

The fan $\F_B$ is depicted in Figure~\ref{FB A fig} in the following way:
\begin{figure}[ht]
\scalebox{1}{\includegraphics{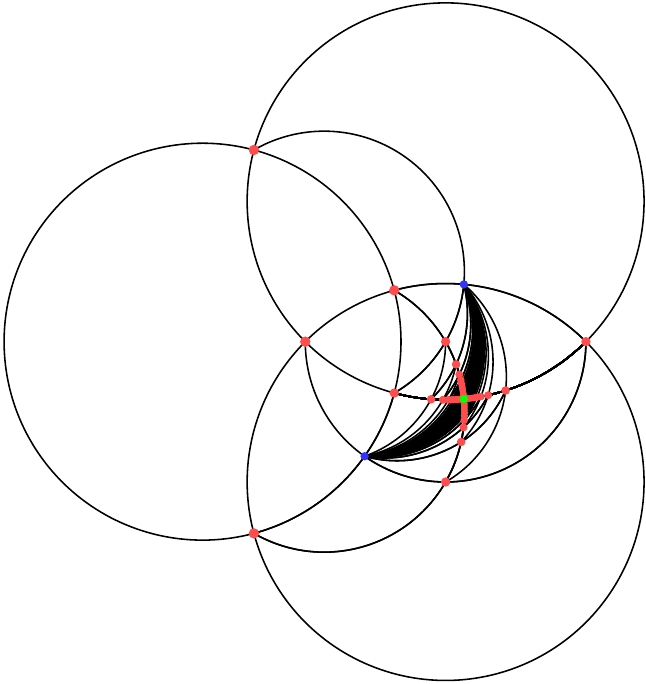}}
\begin{picture}(0,0)(135,-164)
\put(-69,98){$\v_1$}
\put(-2.5,28.5){$\v_2$}
\put(-38,8){$\v_4$}
\put(-67,-102){$\v_3$}
\put(48,31){$\v_+$}
\put(-5.5,-64.5){$\v_-$}
\end{picture}
\caption{$\F_\rationals(T)=\F_{B(T)}$ for $T$ as shown in Figure~\ref{a2annulus}}
\label{FB A fig}
\end{figure}
The intersection of a nonzero cone of $\F_B$ with a unit sphere about the origin is a point, circular arc, or spherical triangle.
The figure shows the stereographic projection of these points, arcs, and triangles.
(The vector $\frac1{\sqrt{3}}[1,\,1,\,1]$ projects to the origin of the plane.)
The ray for $\v_\infty$ is not labeled but is marked as a green dot that appears as a limit of four sequences of red dots.
With two exceptions, each $2$-dimensional face of $\F_B$ is contained in exactly two $3$-dimensional faces of $\F_B$.
The exceptions are the face spanned by $\v_+$ and $\v_\infty$ and the face spanned by $\v_-$ and $\v_\infty$.
\end{example}

\section{Quasi-laminations and \texorpdfstring{$\g$}{g}-vectors}\label{g-vec sec}
In this section, as an aside, we further justify our interest in quasi-laminations by using results of \cite{QP2,NZ} to connect shear coordinates of allowable curves and $\g$-vectors of cluster variables.
Given a tagged arc $\alpha$ in $(\S,\M)$, we define (up to isotopy) a curve $\kappa(\alpha)$.
The curve $\kappa(\alpha)$ coincides with $\alpha$ except within a small ball around the endpoints of $\alpha$.
Within each ball around an endpoint $p$, the behavior of $\kappa(\alpha)$ is as follows:
If $p$ is on a boundary component, then $\kappa(\alpha)$ ends at a point $x$ on the same boundary component such that the path along the boundary from $p$ to $x$, keeping $\S$ on the left, does not leave the small ball.
If $p$ is a puncture, then $\kappa(\alpha)$ spirals into $p$, clockwise if $\alpha$ is tagged plain at $p$ and counterclockwise if $\alpha$ is tagged notched at $p$.
The map $\kappa$ is similar to the map considered in \cite[Definition~17.2]{cats2}, but with the opposite orientation throughout.
The following lemma is immediate.
\begin{lemma}\label{kappa lemma}
The map $\kappa$ from tagged arcs to allowable curves is one-to-one and is surjective onto the set of allowable curves that are not closed.
Two arcs $\alpha$ and $\gamma$ are compatible if and only if the curves $\kappa(\alpha)$ and $\kappa(\gamma)$ are compatible.
\end{lemma}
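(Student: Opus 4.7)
The plan is to prove the three assertions (injectivity, surjective image, and the compatibility equivalence) by constructing an inverse of $\kappa$ explicitly and then running through a short case analysis on shared endpoints.

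For the bijection part, I would describe the inverse $\kappa^{-1}$ on the set of non-closed allowable curves. Given such a curve $\nu$, each of its two ends is either a boundary point or a spiral point. At a boundary point $x$ lying on a boundary component, define the corresponding endpoint of $\kappa^{-1}(\nu)$ to be the first marked point $p$ encountered by traveling along the boundary from $x$ in the direction that keeps $\S$ on the right (the reverse of the convention in the definition of $\kappa$); this $p$ is unique because each boundary component contains at least one marked point. At a spiral point $p$, define the corresponding endpoint of $\kappa^{-1}(\nu)$ to be $p$ with tag plain if the spiral is clockwise and notched if it is counterclockwise. The resulting object is a tagged arc (the excluded allowable curves of Definition~\ref{quasi lam def} correspond exactly to arcs which would cut out a once-punctured monogon, which are also excluded as tagged arcs) and by construction $\kappa(\kappa^{-1}(\nu))=\nu$ and $\kappa^{-1}(\kappa(\alpha))=\alpha$ up to isotopy. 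Closed allowable curves have no endpoints and so cannot lie in the image, completing the bijection claim.

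For the compatibility equivalence, the forward direction proceeds by considering the possible relationships between $\alpha$ and $\gamma$. Since $\kappa$ modifies an arc only inside arbitrarily small neighborhoods of its endpoints, any intersection of $\kappa(\alpha)$ and $\kappa(\gamma)$ in the interior of $\S$ that is not forced by the endpoint modification corresponds to an intersection of the underlying arcs. Thus, if the underlying untagged arcs of $\alpha$ and $\gamma$ are distinct and compatible, the interiors of $\kappa(\alpha)$ and $\kappa(\gamma)$ can be made disjoint, and one checks that the endpoint modifications also produce no intersection: at a shared boundary endpoint, the endpoints are pushed off along the boundary to nearby distinct points; at a shared puncture endpoint, the tagged-compatibility condition forces matching tags, hence matching spiral directions, so the two spirals do not cross. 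In the remaining tagged-compatibility case, where $\alpha$ and $\gamma$ have the same underlying arc and tags that agree at one endpoint but differ at the other, the curves $\kappa(\alpha)$ and $\kappa(\gamma)$ agree except that at the endpoint of disagreement they spiral in opposite directions into the same puncture; this is precisely the second allowed intersection pattern in Figure~\ref{allow intersect}.

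The reverse direction is obtained by contrapositive, using the same case analysis: incompatibility of $\alpha$ and $\gamma$ either produces an unavoidable interior crossing of the underlying arcs (which survives under $\kappa$ since the interiors are preserved) or arises from a conflicting pair of tags at a shared puncture on genuinely distinct underlying arcs, in which case the two resulting oppositely-spiraling ends produce infinitely many crossings near the puncture, violating both clauses of allowable-curve compatibility. The main obstacle I anticipate is bookkeeping the shared-puncture subcase cleanly, since one must separate the ``same underlying arc, opposite tags at exactly one end'' situation (compatible) from the ``distinct underlying arcs, conflicting tags at a shared puncture'' situation (incompatible); once that distinction is handled carefully, the rest is immediate, which is consistent with the authors labeling the lemma as such.
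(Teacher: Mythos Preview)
Your proposal is essentially correct and is exactly the kind of routine verification the paper has in mind; the paper itself offers no proof beyond the sentence ``The following lemma is immediate.'' Your explicit inverse and the endpoint-by-endpoint case analysis are the natural way to unpack that word.

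One small gap in your contrapositive for the compatibility equivalence: you list two sources of tagged-arc incompatibility (an unavoidable interior crossing of distinct underlying arcs, or conflicting tags at a shared puncture on \emph{distinct} underlying arcs), but there is a third. Two tagged arcs with the \emph{same} underlying arc whose tags disagree at \emph{both} endpoints are also incompatible as tagged arcs. In that case $\kappa(\alpha)$ and $\kappa(\gamma)$ coincide in the interior but spiral oppositely into punctures at \emph{both} ends, which fails the ``identical except at exactly one spiral end'' clause of allowable-curve compatibility. This is trivial to add, and once you do, your case split is exhaustive. (Note this case can only occur when both endpoints are punctures, since boundary endpoints are forced to be tagged plain.)

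You should also briefly note, when checking that $\kappa^{-1}(\nu)$ is a legitimate tagged arc, that the constraint ``if the endpoints of an arc coincide, then they must have the same tag'' is automatically satisfied: an allowable curve with both ends spiraling into the same puncture must spiral the same direction at both ends, since opposite spirals there would force a self-intersection.
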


Let $T$ be a tagged triangulation of $(\S,\M)$.
Part of \cite[Theorem~15.6]{cats2} is a bijection between the tagged arcs in $(\S,\M)$ and the cluster variables of a cluster algebra with initial exchange matrix $B(T)$.
(If $(\S,\M)$ has exactly one puncture and no boundary components, then we assume $T$ has all tags plain and restrict the map to plain-tagged arcs.)
We write $\alpha\mapsto x_{T;\alpha}$ for this bijection, but we do not need the details of the map.
Rather, it is enough to know that tagged triangulations map to clusters.
For a suitable choice of coefficients, each cluster variable $x_{T;\alpha}$ has a $\g$-vector of $\g_{T;\alpha}$.
This is an integer vector with entries indexed by the tagged arcs in $T$, via the bijection between tagged arcs in $T$ and cluster variables in the initial cluster.
For the definition of $\g$-vectors, see \cite[Section~6]{ca4}.

\begin{prop}\label{g surfaces} 
Fix a tagged triangulation $T$ and let $\alpha$ be a tagged arc, not necessarily in $T$.
(If $(\S,\M)$ has exactly one puncture and no boundary components, then take all tags on $T$ and on $\alpha$ to be plain.)
Then $\g_{T;\alpha}$ equals $-\b(T,\kappa(\alpha))$.
\end{prop}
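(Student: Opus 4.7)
The plan is to reduce to the base case $\alpha\in T$ by iterated mutation, then close with the Nakanishi--Zelevinsky tropical duality~\cite{NZ} between $\g$-vectors and $c$-vectors. First, I would verify the base case directly. When $\alpha\in T$, one has $\g_{T;\alpha}=\e_\alpha$ by definition of $\g$-vectors, so the claim reduces to showing $\b(T,\kappa(\alpha))=-\e_\alpha$. The curve $\kappa(\alpha)$ runs parallel to $\alpha$ except near its endpoints, where it is slightly displaced either along the boundary (keeping $\S$ on the left) or into a small spiral at a puncture. After isotoping to minimize intersections, $\kappa(\alpha)$ is disjoint from every arc of $T$ other than $\alpha$ and crosses $\alpha$ exactly once; consulting Figures~\ref{shear fig} and~\ref{shear self-folded fig}, the orientation convention built into $\kappa$ is precisely what makes this single crossing contribute $-1$ to the shear coordinate at $\alpha$ and $0$ to every other shear coordinate.

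For the general case, choose a tagged triangulation $T^*$ containing $\alpha$. Under the hypothesis of the proposition (all tags plain in the once-punctured boundaryless case), the dual-graph connectivity recalled in Definition~\ref{complex and flips} provides a flip sequence $\k$ from $T^*$ to $T$. Iterating Theorem~\ref{mut q-lam} along $\k$ gives $\b(T,\kappa(\alpha))=\eta_\k^{B(T^*)}\bigl(\b(T^*,\kappa(\alpha))\bigr)$, and combining with the base case applied to $T^*$ yields
\[
-\b(T,\kappa(\alpha))=\eta_\k^{B(T^*)}(\e_\alpha).
\]
By the definitions~\eqref{mutation map def}--\eqref{eta def}, the right-hand side is exactly the coefficient row indexed by $\alpha$ of the extended exchange matrix produced by equipping $T^*$ with principal coefficients (the $n\times n$ identity block) and then mutating along $\k$ to $T$. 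In other words, $-\b(T,\kappa(\alpha))$ is the $c$-vector at seed $T$ in the principal-coefficient cluster algebra with initial seed $T^*$, at the position indexed by $\alpha$.

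The final step is to identify this $c$-vector with the $\g$-vector $\g_{T;\alpha}$. This is exactly the content of the Nakanishi--Zelevinsky tropical duality~\cite{NZ}, which, in the skew-symmetric case (applicable here by Section~\ref{surface sec}), pairs the matrix of $c$-vectors at $T$ (with initial seed $T^*$) with the matrix of $\g$-vectors at $T^*$ (with initial seed $T$). The role of \cite{QP2} is to package this duality in the form in which it applies to the bijection $\alpha\mapsto x_{T;\alpha}$ of \cite[Theorem~15.6]{cats2}, so that the column indexed by $\alpha$ on one side corresponds to the cluster variable $x_{T;\alpha}$ on the other.

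The main obstacle I expect is reconciling sign and orientation conventions between \cite{ca4} ($\g$-vectors), \cite{NZ} (tropical duality), \cite{QP2}, and \cite{cats2} (shear coordinates), together with the orientation built into the definition of $\kappa$ just above the proposition (opposite to \cite[Definition~17.2]{cats2}). The minus sign on the right-hand side of the proposition and this orientation choice are made precisely so that all four conventions align. The place where that alignment must actually be pinned down is the base-case computation, which has to be checked separately for boundary endpoints, for plain-tagged punctured endpoints, and for notched-tagged punctured endpoints; once the base case is in hand, the inductive part of the argument outlined above is essentially automatic.
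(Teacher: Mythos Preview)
Your overall architecture---check the base case $\alpha\in T$, then propagate by flipping from a triangulation $T^*$ containing $\alpha$ to $T$---matches the paper's, but the propagation step contains a genuine error.  From $\b(T,\kappa(\alpha))=\eta_\k^{B(T^*)}\bigl(\b(T^*,\kappa(\alpha))\bigr)$ and $\b(T^*,\kappa(\alpha))=-\e_\alpha$ you conclude $-\b(T,\kappa(\alpha))=\eta_\k^{B(T^*)}(\e_\alpha)$.  This is false in general: the mutation map $\eta_\k^{B(T^*)}$ is only \emph{piecewise} linear, so you cannot pull the minus sign through.  What you actually get, via~\eqref{eta antipodal}, is
\[
-\b(T,\kappa(\alpha))=-\eta_\k^{B(T^*)}(-\e_\alpha)=\eta_\k^{-B(T^*)}(\e_\alpha),
\]
which is a $c$-vector for the exchange matrix $-B(T^*)=B(T^*)^T$, not for $B(T^*)$.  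Your subsequent identification with a $c$-vector in the principal-coefficient pattern over $B(T^*)$ is therefore off, and the Nakanishi--Zelevinsky duality you invoke (an inverse-transpose relation between $C$- and $G$-matrices) does not by itself hand you a column-by-column equality of a $c$-vector with the desired $\g$-vector.

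The paper sidesteps this detour entirely.  Rather than computing $-\b(T,\kappa(\alpha))$ as a $c$-vector and then appealing to tropical duality, it uses the result of \cite{QP2,NZ} directly in the form of Conjecture~\ref{gvec mu}: the $\g$-vector $\g_{T;\alpha}$ transforms under a flip of $T$ by $\eta_k^{B(T)^T}=\eta_k^{-B(T)}$, so by~\eqref{eta antipodal} the negated vector $-\g_{T;\alpha}$ transforms by $\eta_k^{B(T)}$.  This is exactly the recurrence Theorem~\ref{mut q-lam} gives for $\b(T,\kappa(\alpha))$, so the two quantities satisfy the same recurrence in $T$ and agree in the base case.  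Your argument can be repaired along these lines, but as written the key displayed equation is incorrect.
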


The proof of Proposition~\ref{g surfaces} revolves around the following weak version of \cite[Conjecture~7.12]{ca4}.

\begin{conj}\label{gvec mu}
Let $t_0\dashname{k} t_1$ be an edge in $\T_n$ and let $B_0$ and $B_1$ be exchange matrices such that $B_1=\mu_k(B_0)$.
For any $t\in\T_n$ and $i\in[n]$, the $\g$-vectors $\g_{i;t}^{B_0;t_0}$ and $\g_{i;t}^{B_1;t_1}$ are related by $\g_{i;t}^{B_1;t_1}=\eta^{B_0^T}_k(\g_{i;t}^{B_0;t_0})$, where $B_0^T$ is the transpose of~$B_0$.
\end{conj}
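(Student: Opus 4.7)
The plan is to deduce Conjecture~\ref{gvec mu} from two known results: the tropical duality framework of Nakanishi--Zelevinsky \cite{NZ}, combined with the sign-coherence of $\mathbf{c}$-vectors proved by Derksen--Weyman--Zelevinsky \cite{QP2} in the skew-symmetric case. This is the only case actually needed in the paper, since $B(T)$ is skew-symmetric for every tagged triangulation~$T$ of a marked surface, although the same deduction goes through in the skew-symmetrizable case now that sign-coherence is established there as well.

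First, I would pass to the cluster pattern with initial exchange matrix $B_0$ at $t_0$ and principal coefficients at $t_0$. In this setting, the $\mathbf{c}$-vectors $\mathbf{c}_{j;t}^{B_0;t_0}$ are the coefficient rows of the extended exchange matrix at $t$, and the $\g$-vectors $\g_{i;t}^{B_0;t_0}$ record the $\integers^n$-degree of the cluster variable $x_{i;t}$ under the standard grading of \cite[Proposition~6.1]{ca4}. Assembling these into matrices $G_t^{B_0;t_0}$ and $C_t^{B_0;t_0}$, the main theorem of \cite{NZ} asserts that, conditional on sign-coherence of $\mathbf{c}$-vectors at every vertex, the matrices satisfy the tropical duality $G_t^{B_0;t_0} D (C_t^{B_0;t_0})^T D^{-1} = I$, where $D$ is the skew-symmetrizing diagonal matrix. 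Because the transformation rule for $\mathbf{c}$-vectors under mutation of the initial seed is known directly (the coefficient rows mutate by $\eta_k^{B_0}$, since they are literally coefficient rows of $\tB_t$), the tropical duality can be inverted to extract the transformation rule for $\g$-vectors. Carrying out this transposition and tracing how $D$ interacts with the explicit piecewise-linear formula \eqref{mutation map def} yields precisely $\g_{i;t}^{B_1;t_1}=\eta^{B_0^T}_k(\g_{i;t}^{B_0;t_0})$; this is essentially the content of \cite[Eq.~(1.14)]{NZ}, with the appearance of the \emph{transpose} $B_0^T$ being exactly the footprint of the duality between $\g$-vectors and $\mathbf{c}$-vectors.

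Second, I would invoke \cite{QP2} to discharge the sign-coherence hypothesis in the skew-symmetric setting. Derksen--Weyman--Zelevinsky categorify the cluster algebra of a quiver $Q$ by decorated representations of a quiver with potential $(Q,W)$; their positivity of $F$-polynomial coefficients directly yields sign-coherence of the $\mathbf{c}$-vectors. Combined with the first step, this establishes Conjecture~\ref{gvec mu} in full whenever $B_0$ is skew-symmetric, which covers all uses in this paper.

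The main obstacle, should one try to prove this from scratch, is sign-coherence itself; everything else is a mechanical manipulation of the matrix identity of \cite{NZ}. Since sign-coherence is a deep representation-theoretic statement, the sensible path is to cite \cite{NZ,QP2} rather than attempt to reprove it. A secondary but easier task is to verify that the coordinatewise description of $\eta^{B_0^T}_k$ in~\eqref{mutation map def}, when applied to the $\g$-vector, matches the piecewise-linear formula that emerges from the tropical duality identity; this amounts to checking that the sign-coherence of $\mathbf{c}$-vectors at $t$ is exactly what is needed to resolve the case split in the definition of $\eta^{B_0^T}_k$.
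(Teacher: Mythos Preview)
Your proposal is correct and matches the paper's approach exactly: the paper does not prove Conjecture~\ref{gvec mu} directly but simply states that ``concatenating results of \cite{QP2,NZ}, as explained in \cite[Remark~8.14]{universal}, we see that Conjecture~\ref{gvec mu} is true whenever $B_0$ and $B_1$ are skew-symmetric.'' Your write-up supplies the mechanism behind that one-line citation---tropical duality from \cite{NZ} plus sign-coherence from \cite{QP2}---which is precisely what the cited remark unpacks.
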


Concatenating results of \cite{QP2,NZ}, as explained in \cite[Remark~8.14]{universal}, %\margin{This is Remark {when sign-coherence known}.}
we see that Conjecture~\ref{gvec mu} is true whenever $B_0$ and $B_1$ are skew-symmetric, and in particular for exchange matrices arising from surfaces.
Furthermore, by skew-symmetry, we rewrite the assertion of Conjecture~\ref{gvec mu} as $\g_{i;t}^{B_1;t_1}=\eta^{-B_0}_k(\g_{i;t}^{B_0;t_0})$, and then use \eqref{eta antipodal} to further rewrite the assertion as
%\begin{equation}\label{surface eta g}
$-\g_{i;t}^{B_1;t_1}=\eta^{B_0}_k(-\g_{i;t}^{B_0;t_0})$,
%\end{equation}
for exchange matrices $B_0$ and $B_1$ related by mutation at position $k$.
Alternately, let $T'$ be obtained from $T$ by a flip of the arc $\gamma$ and let $\alpha$ be any arc. 
Then $-\g(T';\alpha)=\eta_\gamma^{B(T)}(-\g(T,\alpha))$, where $\eta_\gamma^{B(T)}$ is the mutation map at the index for the arc $\gamma$ in $T$.
On the other hand, suppose $\lambda$ is an allowable curve.
Theorem~\ref{mut q-lam} says that the vector $\b(T',\lambda)$ is $\eta_\gamma^{B(T)}(\b(T,\lambda))$, where $\eta_\gamma^{B(T)}$ is the mutation map at the index for the arc $\gamma$ in~$T$.
Setting $\lambda=\kappa(\alpha)$, we see that vectors $-\g(T,\alpha)$ and $\b(T,\kappa(\alpha))$ satisfy the same recurrence.
When $\alpha$ is an arc in $T$, the vector $\b(T,\kappa(\alpha))$ has $-1$ in the position indexed by~$\alpha$ and $0$ elsewhere.
In this case, $x_{T;\alpha}$ is the cluster variable indexed by~$\alpha$ in the initial cluster, so $-\g(T;\alpha)$ also has $-1$ in the position indexed by $\alpha$ and $0$ elsewhere.
This completes the proof of Proposition~\ref{g surfaces}.

\section{The Curve Separation Property in many cases}\label{curve separation sec}
In this section, we prove the following theorem.

\begin{theorem}\label{curve separation}
If  $(\S,\M)$ has one or more boundary components or if $(\S,\M)$ has two or more punctures, then $(\S,\M)$ has the Curve Separation Property.
\end{theorem}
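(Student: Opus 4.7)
The plan is a case analysis on how $\lambda$ and $\nu$ fail to be compatible. Recall from Definition~\ref{quasi lam def} that compatibility of allowable curves means either non-intersection or agreement except at a single endpoint where the curves spiral oppositely into a common puncture. Incompatible pairs therefore fall into two broad families: those with an essential transversal crossing in $\S\setminus\M$, and those whose failure of compatibility lies in their behavior at shared puncture endpoints (coinciding spiral points with agreeing spiral direction, disagreement at both spiral ends, and the configurations excluded in Figure~\ref{exclude curves}). In either family, the goal is to produce a tagged triangulation $T$ and an arc $\gamma\in T$ whose two adjacent triangles form a quadrilateral $Q$ exhibiting $\lambda$ and $\nu$ in the opposite-sign patterns of Figure~\ref{shear fig}.

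For the transversal-crossing family, I would pick an essential intersection point $p\in\lambda\cap\nu$ and isotopically deform $\lambda$ and $\nu$ so that near $p$ they form a small ``X''. I would then embed this X inside a small quadrilateral $Q$ whose sides each meet one of the four outgoing branches, so that $\lambda$ traverses $Q$ from one pair of opposite sides and $\nu$ from the other. Choosing the diagonal $\gamma$ of $Q$ appropriately places $\lambda$ into the $+1$ pattern and $\nu$ into the $-1$ pattern of Figure~\ref{shear fig}. Because the hypothesis of Theorem~\ref{curve separation} excludes exactly the once-punctured surfaces without boundary, the complement $\S\setminus(Q\cup\gamma)$ still admits enough room to be triangulated, producing (after applying $\tau$ to pass to a tagged triangulation) an overall $T$ containing $\gamma$ with the desired local picture.

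For the family where $\lambda$ and $\nu$ differ only at shared spiral points, I would split into subcases by the number of shared spiral endpoints and the spiral directions at each. In each subcase, the tag-switching and spiral-reversal preamble of Definition~\ref{shear def} can be accommodated in advance, by choosing the tagging of the arcs of $T$ incident to each relevant puncture $p$ so that the resulting $T'$, $\lambda'$, $\nu'$ exhibit the two curves entering a neighborhood of $p$ on opposite sides of a chosen arc $\gamma$ incident to $p$. Building $T$ so that $\gamma$ is one of its arcs and its adjacent triangles sit in a suitable configuration again yields $+1$ for one curve and $-1$ for the other at $\gamma$. The exclusions of Figure~\ref{exclude curves} from the notion of allowable curve are precisely what guarantee that these engineered sign differences are genuinely available, since the excluded configurations are the ones in which $\lambda$ and $\nu$ would be forced to agree after the tag-switching reduction.

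The main obstacle will be controlling the cumulative shear coordinate contributions coming from other, unavoidable crossings of $\gamma$ with $\lambda$ or $\nu$; a single locally engineered $\pm1$ pattern is useless if it is swamped by contributions from elsewhere along the curve. The remedy is to pick $T$ so that, once $\lambda$ or $\nu$ exits the quadrilateral $Q$, it enters triangles in which no further crossing of $\gamma$ can contribute $\pm 1$ in the sense of Figure~\ref{shear fig}, the other crossings automatically falling into the ``$0$'' configuration. This requires a careful global choice of $T$ routing the remainder of each curve through triangles with controlled geometry, together with separate treatment of the degenerate situations in which $Q$ collapses, for example when one of its triangles is self-folded (Figure~\ref{shear self-folded fig}) or when some sides or vertices of $Q$ are identified. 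These degenerations account for most of the ``long but elementary'' case checks promised in the introduction.
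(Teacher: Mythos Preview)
Your outline has the right skeleton (case analysis, build a quadrilateral with $\gamma$ as diagonal so that $\lambda$ and $\nu$ cross it in opposite Figure~\ref{shear fig} patterns), but there are two genuine gaps.

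First, the ``small quadrilateral around the crossing point $p$'' is not realizable as stated: the sides of $Q$ must be arcs, and arcs have endpoints in $\M$. There is no reason for marked points to lie near $p$, so the arcs bounding $Q$ will in general be long curves that wander through the surface before reaching $\M$. Constructing such arcs so that they actually bound two triangles sharing $\gamma$---and checking that none of them degenerates (bounds an unpunctured monogon, defines an unpunctured digon, coincides with another, or folds the second triangle onto itself)---is the real work. The paper handles this by organizing cases according to the type of $\lambda$ (both ends on boundary; one boundary end and one spiral; closed; both ends spiral) rather than by the type of incompatibility, and in each case builds $\gamma$ and the red/blue arcs by \emph{closely following} $\lambda$ and $\nu$ over long distances, then verifies validity arc by arc. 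The closed-curve case and the ``essential spiral intersection'' case (your transversal family when both curves spiral at both ends) each require several additional flips to reach a usable configuration.

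Second, the obstacle you single out---that a locally engineered $\pm1$ might be swamped by contributions from other crossings of $\gamma$---is a non-issue, and your proposed remedy (routing the rest of each curve through $0$-contributing triangles) is both unnecessary and probably impossible to arrange. Since $\lambda$ is non-self-intersecting, any two nonzero contributions to $b_\gamma(T,\lambda)$ from distinct crossings of $\gamma$ must have the same sign; otherwise the two crossing segments of $\lambda$ would have to meet inside $Q$. So a single $+1$ crossing for $\lambda$ and a single $-1$ crossing for $\nu$ already force $b_\gamma(T,\lambda)>0$ and $b_\gamma(T,\nu)<0$. This observation is stated at the outset of the paper's proof and is what makes the entire strategy feasible; without it you would indeed face the global control problem you describe.
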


The hypotheses rule out the case where $(\S,\M)$ has no boundary components and exactly one puncture.
See Remarks~\ref{all arcs tagged plain} and~\ref{the missing case}.
As mentioned in the introduction, one additional case of the Curve Separation Property is established in~\cite{unitorus}.

{%These braces are to make the change in section numbering local rather than global.
% Keep this at the beginning of Section {curve separation sec}
\renewcommand{\thesubsection}{\textbf{Case \arabic{subsection}}}
\renewcommand{\thesubsubsection}{Case \arabic{subsection}\alph{subsubsection}}
\newcommand{\case}[1]{\subsection{#1}}
\newcommand{\subcase}[1]{\subsubsection{#1}}

To prove Theorem~\ref{curve separation}, we take incompatible allowable curves $\lambda$ and $\nu$ and construct a tagged triangulation $T$ and a tagged arc $\gamma\in T$ such that the shear coordinates $b_\gamma(T,\lambda)$ and $b_\gamma(T,\nu)$ have strictly opposite signs.
An allowable curve $\lambda$ may intersect $\gamma$ many times, but since $\lambda$ may not intersect itself, all nonzero contributions to $b_\gamma(T,\lambda)$ from these intersections have the same sign.
Thus to prove that $b_\gamma(T,\lambda)$ and $b_\gamma(T,\nu)$ have strictly opposite signs, it is enough to find one intersection of $\lambda$ with $\gamma$ and one intersection of $\nu$ with $\gamma$ that contribute opposite signs to $b_\gamma(T,\lambda)$ and $b_\gamma(T,\nu)$.
We consider many cases.
In most cases we construct, instead of a tagged triangulation $T$, a triangulation $T^\circ$.
In these cases, we tacitly take $T$ to be the tagged triangulation $\tau(T^\circ)$.
(See Definition~\ref{tagged tri def}.)
We avoid restating the conclusion (that the shear coordinates $b_\gamma(T,\lambda)$ and $b_\gamma(T,\nu)$ have strictly opposite signs) in every case.

In each case, part of the triangulation $T$ or $T^\circ$ is illustrated in a figure.
To make the figures as clear as possible, we continue the convention on red, blue, and purple arcs and light-purple shading, established in Definition~\ref{shear def}.
The arc $\gamma$ is always drawn in purple.
Arcs that are not used to calculate shear coordinates at $\gamma$ are gray.
The curve $\lambda$ is solid and black, while $\nu$, when it appears, is dotted and green.
Marked points are numbered as necessary.
We shade areas outside of $\S$ (i.e.\ beyond the boundary) with a dark gray.
Since our figures are confined to the (topologically uncomplicated) screen or page, we often need to indicate that certain loops are not contractible, even though on the page they appear to be contractible.  
To do so, we shade the inside of the loop with a light gray striped pattern.  
The figures show certain points and arcs as distinct when they might possibly coincide.
When necessary, the proof considers the possible coincidences.
In ruling out certain coincidences, it is important to remember that $\S$ is an \emph{oriented} surface.

Some arcs making up $T$ or $T^\circ$ are constructed by \newword{following closely} along existing curves.
This is meant as a technical term:
A curve follows another closely if there are no punctures or boundary components between them.
The point is to be certain that the arcs constructed do indeed bound triangles.
In the figures, one might draw a curve tightly next to another curve to indicate that it follows closely.
However, to aid legibility, in many cases we have straightened curves somewhat.

Because there are many cases, we first list and discuss the cases to make it clear that every possibility is covered.
\begin{description}\addtolength\itemindent{-13pt}
\item[\ref{boundary case}]  Both endpoints of $\lambda$ are on boundary segments.
%\begin{description}
%\item[\ref{two boundary case}] $\lambda$ has endpoints on two distinct boundary segments.
%\item[\ref{one boundary case}] $\lambda$ has both endpoints on the same boundary segment.
%\end{description}
\item[\ref{boundary spiral case}]  $\lambda$ has one endpoint on a boundary segment and one spiral point.
\item[\ref{closed case}]  $\lambda$ is a closed curve.
%\begin{description}
%\item[\ref{closed left right case}] There exists a marked point left of $\lambda$ and a marked point right of $\lambda$.  (This condition is defined in \ref{closed case}.)
%\item[\ref{closed not left right case}] $\lambda$ is a closed curve not falling into \ref{closed left right case}.
%\end{description}
\end{description}
By the symmetry of swapping $\lambda$ and $\nu$, in the remaining cases we are free to assume that both $\lambda$ and $\nu$ have spiral points at both ends.
\begin{description}\addtolength\itemindent{-13pt}
\item[\ref{spiral essential case}]  $\lambda$ and $\nu$ both have spiral points at both ends, and they have an intersection that cannot be removed by changing the directions of spiral points.
\end{description}
In the remaining cases, we can assume that the only intersections between $\lambda$ and $\nu$ come at shared spiral points where their spiral directions disagree.
\begin{description}\addtolength\itemindent{-13pt}
\item[\ref{spiral one not case}]  $\lambda$ and $\nu$ both have spiral points at both ends and only intersect where they share spiral points with opposite directions.  One spiral point of $\nu$ is not a spiral point of $\lambda$.
\end{description}
Again by symmetry, in the remaining case we are free to assume that the set of spiral points of $\lambda$ equals the set of spiral points of $\nu$.
\begin{description}\addtolength\itemindent{-13pt}
\item[\ref{spiral equal case}]  $\lambda$ and $\nu$ both have spiral points at both ends and only intersect where they share spiral points with opposite directions.  The set of spiral points of $\lambda$ equals the set of spiral points of $\nu$.
\end{description}
These cases exhaust the possibilities. 

\begin{remark}\label{the missing case}
When $(\S,\M)$ is a surface with no boundary components and exactly one puncture (the case explicitly ruled out by hypothesis), to prove the Curve Separation Property we must construct a tagged triangulation $T$ and an arc $\gamma$ as described above, with the additional requirement that $T$ has all arcs tagged plain.
For such a surface $(\S,\M)$, only \ref{closed case}, \ref{spiral essential case}, and \ref{spiral equal case} apply.
With a view towards eventually removing the extra hypothesis from the theorem, we give arguments in \ref{closed case} and \ref{spiral equal case} that also work for this kind of surface $(\S,\M)$.
The arguments work because when $\S$ has no boundary components and exactly one puncture, no arc bounds a once-punctured monogon.
Thus when $\S$ has no boundary components and exactly one puncture and we construct $T^\circ$, we know automatically that $T$ has all arcs tagged plain.
We are presently unable to prove \ref{spiral essential case} for $(\S,\M)$ having no boundary components and exactly one puncture.
\end{remark}

We now proceed to consider each case.

\case{Both endpoints of $\lambda$ are on boundary segments.}\label{boundary case}

\subcase{$\lambda$ has endpoints on two distinct boundary segments.}\label{two boundary case}
We color the two boundary segments red and draw blue and purple arcs as shown in Figure~\ref{two boundary fig}.  
\begin{figure}[ht]
\includegraphics{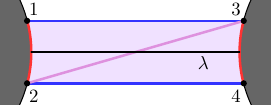}
\caption{An illustration for \ref{two boundary case}}
\label{two boundary fig}
\end{figure}
The top blue arc starts at point $1$, closely follows the left red boundary segment down to $\lambda$, then closely follows $\lambda$ to the right red boundary segment, and closely follows that boundary segment up to point $3$.
We make the symmetric construction for the bottom blue arc.
Since the two red segments are distinct, the blue arcs are compatible or coincide.  
The purple arc is a diagonal of the red-blue-red-blue rectangle and connects points $2$ and $3$.

If the top blue arc bounds an unpunctured monogon, then points $1$ and $3$ coincide, and $\lambda$ is not allowable because it is contractible to a piece of the boundary containing one marked point.
If the top blue arc defines an unpunctured digon, then point~$3$ is a neighbor to point $1$ on the same boundary component.
In this case, we replace the top blue arc by the boundary segment (which we again color blue) whose endpoints are $1$ and $3$.
The symmetric argument shows that the bottom blue arc is valid or can be replaced by a boundary segment.
The purple arc $\gamma$ does not bound an unpunctured monogon because if points $2$ and $3$ coincide, then the two red segments coincide, contradicting the description of this case.
If the purple arc defines an unpunctured digon, then one of the blue arcs bounds an unpunctured monogon, but this has already been ruled out.

Extend these arcs to a triangulation $T^\circ$.
The blue and purple arcs and the red segments define triangles in $T^\circ$.
If $\nu$ is a curve that intersects $\lambda$ (and if the intersection cannot be avoided by independent isotopy deformations of the two curves), then $\nu$ must, somewhere along its extent, enter the pictured quadrilateral through a blue arc (or originate on a blue segment), cross the purple arc, and exit through the opposite blue arc (or end on the opposite blue segment).

\subcase{$\lambda$ has both endpoints on the same boundary segment.}\label{one boundary case}
The situation is illustrated in the top-left picture of Figure~\ref{one boundary fig}.
\begin{figure}[ht]
\scalebox{1.2}{\includegraphics{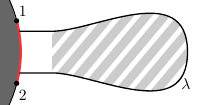}}\qquad\qquad\scalebox{1.2}{\includegraphics{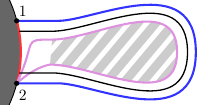}}\\[15 pt]
\scalebox{1.2}{\includegraphics{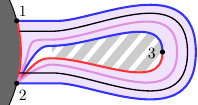}}\qquad\qquad\scalebox{1.2}{\includegraphics{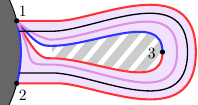}}
\caption{Illustrations for \ref{one boundary case}}
\label{one boundary fig}
\end{figure}
Color the boundary segment red and draw blue and purple arcs as shown in the top-right picture of Figure~\ref{one boundary fig}.
The blue arc follows closely the red segment from point $1$ to $\lambda$, turns left and follows $\lambda$ closely without crossing it, and finally turns left and follows the red segment closely down to point~$2$.
The purple arc $\gamma$ follows the red segment closely up from point $2$, crosses $\lambda$ once and, the second time it meets $\lambda$, turns right and follows $\lambda$ without crossing it until it meets the earlier part of the purple arc, then turns left and follows the red segment closely back down to $2$ without crossing itself.

If the blue arc bounds an unpunctured monogon, then points $1$ and $2$ coincide and $\lambda$ is not allowable because it is contractible to a piece of the boundary containing one marked point.
If the blue arc defines an unpunctured digon, then points $1$ and $2$ are the only marked points on their boundary segment, and we replace the blue arc with the boundary segment connecting $1$ and $2$ that is not already red, coloring this segment blue.
The purple arc does not bound an unpunctured monogon because $\lambda$ is not contractible to a piece of the red boundary segment.
If the purple arc defines an unpunctured digon, then the blue arc bounds an unpunctured monogon, but we have already ruled out that possibility.

Extend these arcs to a triangulation $T^\circ$.
In $T^\circ$, there is a second triangle having the purple arc as an edge.
If the second triangle is self-folded, then $\lambda$ is an excluded curve of a type illustrated in Figure~\ref{exclude curves}.
Thus the second triangle is not self-folded, and we color its other edges as in the bottom-left picture of Figure~\ref{one boundary fig}.

Now suppose $\nu$ intersects $\lambda$.
Then $\nu$ must cross the blue arc (or originate on the blue segment) on the outer side of the picture and then cross either the inner red arc or the inner blue arc.  
If it crosses the inner blue arc, then the shear coordinates $b_\gamma(T,\lambda)$ and $b_\gamma(T,\nu)$ have strictly opposite signs.
Otherwise, we alter the collection of arcs so that the purple arc and the inner red and blue arcs are incident to point $1$ instead of point $2$.
Recoloring appropriately, we obtain the bottom-right picture of Figure~\ref{one boundary fig}.
Now $\nu$ crosses the red arc on the outer side of the picture and then the inner red arc, so the two shear coordinates have strictly opposite signs.

\case{$\lambda$ has one endpoint on a boundary segment and one spiral point.}\label{boundary spiral case}
Suppose first that $\lambda$ spirals clockwise into the puncture.
Color the boundary segment red and label its endpoints $1$ and $2$ as shown in Figure~\ref{boundary puncture fig}.
Label the spiral point as $3$.
\begin{figure}[ht]
\scalebox{1.3}{\includegraphics{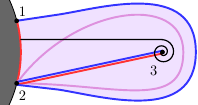}}
\caption{An illustration for \ref{boundary spiral case}}
\label{boundary puncture fig}
\end{figure}
Draw a blue arc from point $1$, following the red segment closely to $\lambda$, following $\lambda$ to point $3$, going closely around point $3$, following $\lambda$ closely back to the red segment, and following the red segment to point~$2$.
The blue arc does not bound an unpunctured monogon because $(\S,\M)$ is not a once-punctured monogon.
If the blue arc defines an unpunctured digon, then points $1$ and $2$ are the only marked points on their boundary component and we replace the blue arc with the non-red boundary segment connecting point $1$ to point $2$, coloring this segment blue.
Draw a purple arc from point $2$ to itself, following the red segment closely from point $2$ to point $1$ and then following the blue arc closely back to point $2$. 
The purple arc does not bound an unpunctured monogon.
It also does not define an unpunctured digon because $(\S,\M)$ is not a once-punctured monogon.
Now draw an arc from point $2$ to the puncture, without crossing the purple arc.
This arc is indicated both in red and in blue in the picture to match the coloring conventions of Figures~\ref{shear fig} and~\ref{shear self-folded fig}.
Complete these arcs to a triangulation $T^\circ$. 

If $\nu$ is not compatible with $\lambda$, then there are two possibilities.
The first possibility is that some portion of $\nu$ crosses the outer blue arc (or originates in the outer blue segment) and then crosses the red/blue arc before crossing the outer blue arc again (or terminating in the outer blue segment). 
In doing so, $\nu$ crosses the red-blue-red-blue quadrilateral twice, picking up a nonzero contribution to $b_\gamma(T,\nu)$ one of the two times.
The second possibility is that $\lambda$ crosses the outer blue arc (or originates in the outer blue segment) and then spirals counterclockwise into the puncture.
In either case, we obtain the desired sign difference of shear coordinates at the purple arc $\gamma$.
Notice that in the case where $\lambda$ and $\nu$ intersect but are compatible (that is, when they agree except for the direction of the spiral into the puncture), the relevant shear coordinate for $\nu$ is zero.

If $\lambda$ spirals counterclockwise into the puncture, then we redraw the picture swapping the roles of points $1$ and $2$, reversing left and right in drawing the arcs, and swapping the colors red and blue.

\case{$\lambda$ is a closed curve.}\label{closed case}
Choose a direction to traverse $\lambda$.
Say a marked point $p$ is \newword{left of $\lambda$} if, traversing $\lambda$ in the chosen direction, there is a path that leaves $\lambda$, goes left, and reaches $p$ without crossing $\lambda$ again.
We define the notion \newword{right of $\lambda$} analogously.
For certain curves $\lambda$, it is possible for the same point to be both right of $\lambda$ and left of $\lambda$.

\subcase{There exists a marked point left of $\lambda$ and a marked point right of $\lambda$.}\label{closed left right case}
Label some marked point left of $\lambda$ with the number $1$.
Label some marked point right of $\lambda$ with the number $2$.
Possibly the points $1$ and $2$ coincide.
We draw a blue arc that follows closely on the left some curve from point $1$ to the left of $\lambda$, turns left, follows $\lambda$ closely on the left all around, turns left again and follows closely the curve back to point $1$.
We draw another blue curve in the same way on the right.  These curves are shown in Figure~\ref{closed annulus fig}.
\begin{figure}[ht]
\includegraphics{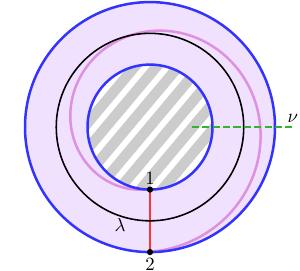}
\caption{An illustration for \ref{closed left right case}}
\label{closed annulus fig}
\end{figure}
Neither blue arc bounds an unpunctured monogon, because $\lambda$ is not contractible to a puncture.
If the blue arc at point $1$ defines an unpunctured digon, then point $1$ is on a boundary component and is the only marked point on that component.
In this case, we replace the inner blue arc with the unique boundary segment on that component, which we then color blue.
(In this case, we should replace the light gray striped shading with dark gray shading.)
We proceed similarly if the blue arc at point $2$ defines an unpunctured digon.
If points $1$ and $2$ coincide, then possibly the two blue curves coincide as well.

If $\nu$ intersects $\lambda$, then somewhere along its extent it crosses the outer blue arc (or originates on the outer blue segment) and then crosses the inner blue arc (or terminates on the inner blue segment).
Possibly $\nu$ winds around the inner blue arc a number of times before crossing it, but for simplicity, it is shown without winding in Figure~\ref{closed annulus fig}.
We draw a red arc from point $1$ to point $2$ without crossing $\nu$ as follows:
The arc follows the inner blue arc closely counterclockwise to $\nu$, then follows $\nu$ closely, without crossing it, to the outer blue arc, and finally follows closely the outer blue arc clockwise to point $2$.
The red and blue arcs define a quadrilateral with the red edges identified.
We draw a purple arc $\gamma$ that forms a diagonal of this quadrilateral.
(Either diagonal works for the conclusion of the theorem, but one choice is consistent with our coloring scheme.)
We complete this set of arcs to a triangulation $T^\circ$.

\subcase{$\lambda$ is a closed curve not falling into \ref{closed left right case}.}\label{closed not left right case}
Without loss of generality, suppose all marked points are left of $\lambda$.
Since these points are not also right of~$\lambda$, the surface decomposes at $\lambda$ as a connected sum.
In Figure~\ref{closed sum fig}, we depict the situation with the ``left'' summand, containing marked points, outside of the curve $\lambda$ and the  ``right'' summand, containing no marked points, inside $\lambda$. 
\begin{figure}[ht]
\scalebox{1.1}{\includegraphics{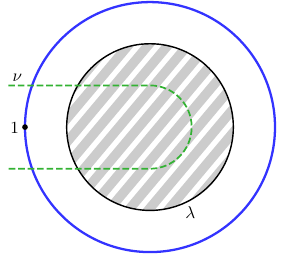}}\qquad\scalebox{1.1}{\includegraphics{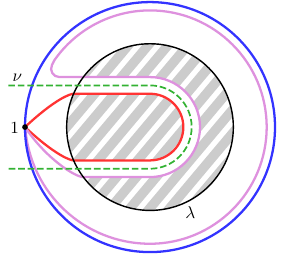}}
\caption{Illustrations for \ref{closed not left right case}}
\label{closed sum fig}
\end{figure}
Since $\lambda$ is not contractible, the right summand is topologically nontrivial.
We choose a particular marked point, labeled $1$, on the left and draw a blue arc closely following $\lambda$ just as we drew the first blue arc in \ref{closed left right case}.
As in \ref{closed left right case}, the blue arc does not bound an unpunctured monogon, and if it defines an unpunctured digon, then we replace it with a blue boundary segment.

If $\nu$ intersects $\lambda$, then $\nu$ must, somewhere along its extent, cross $\lambda$ to enter the right summand, trace a nontrivial loop in the right summand, and then cross $\lambda$ again to leave the right summand.
In fact, $\nu$ may do this several times, but in the description below, we ignore any other pieces of $\nu$ that cross $\lambda$.
The situation is depicted in the left picture of Figure~\ref{closed sum fig}.
In this picture, the entire region inside $\lambda$ is shaded with light gray stripes to indicate both that the right connected summand is topologically nontrivial and that the closed path formed by part of $\nu$ and part of the blue arc is neither contractible to a point nor isotopic to $\lambda$.
Now, draw a red arc and a purple arc as shown in the right picture of Figure~\ref{closed sum fig}.
The red arc follows the blue arc closely from point $1$ up to $\nu$, follows $\nu$ closely without crossing it until reaching the blue arc again, and follows the blue arc back to point $1$.
The purple arc follows the blue arc, crosses $\nu$, and continues almost back to point $1$ before turning to follow $\nu$ closely, without immediately crossing $\nu$ again.
It follows $\nu$ closely until it is close to the blue arc and then crosses $\nu$ to follow the blue arc back to point $1$.
The red and purple arcs do not bound unpunctured monogons or define unpunctured digons because the part of $\nu$ shown is not contractible to part of the blue arc.
The red, blue, and purple arcs form a triangle, oriented consistently with the convention of Figure~\ref{shear fig}.

These arcs can be completed to a triangulation $T^\circ$.
In $T^\circ$, there is a second triangle having the purple arc as an edge, as shown in Figure~\ref{closed sum fig 2}.
\begin{figure}[ht]
\scalebox{1.2}{\includegraphics{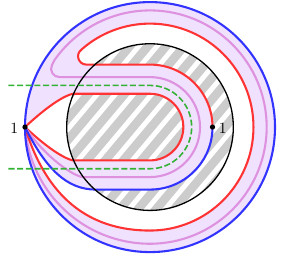}}
\caption{Another illustration for \ref{closed not left right case}}
\label{closed sum fig 2}
\end{figure}
Since point $1$ is the only marked point that can be reached without crossing the blue arc, all three vertices of the second triangle are at point $1$. 
The second triangle is not self-folded, because all three of its vertices coincide.
To compute $b_\gamma(T,\lambda)$, notice that $\lambda$ intersects the red-blue-red-blue quadrilateral in at least two places.
Two intersections are shown in Figure~\ref{closed sum fig 2}.
One of the two contributes $0$ to $b_\gamma(T,\lambda)$, but the other contributes $1$.
A similar situation arises in the calculation of $b_\gamma(T,\nu)$.

An explicit example of \ref{closed not left right case} may be helpful.  
We take $(\S,\M)$ to be the double-torus with one puncture and take $\lambda$ to be a curve along which $\S$ decomposes as a connected sum of tori.
This is a fairly representative example, since the right connected summand of $\S$ in general is a nonempty connected sum of tori.
Specifically, let $\S$ be the octagon with identifications indicated in the top-left picture of Figure~\ref{closed sum example fig}.
\begin{figure}[ht]
\scalebox{1.1}{\includegraphics{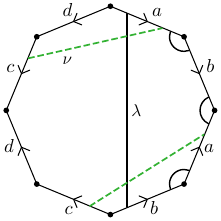}}\qquad\scalebox{1.1}{\includegraphics{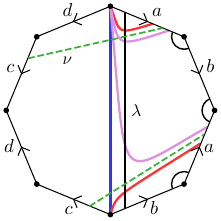}}\\
\scalebox{1.1}{\includegraphics{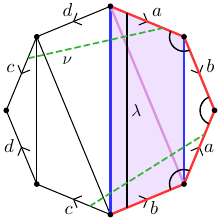}}
\caption{An example of \ref{closed not left right case}}
\label{closed sum example fig}
\end{figure}
All eight vertices of the octagon are identified, and we take this common point to be the puncture.
The curves $\lambda$ and $\nu$ are indicated as usual by a solid black and a dotted green line.
The curve $\lambda$ appears in four pieces that are identified to a closed curve; the curve $\nu$ appears in two pieces.
The top-right picture of Figure~\ref{closed sum example fig} shows the blue, red, and purple arcs constructed first.
The bottom picture shows these three arcs, straightened and completed to a triangulation of $\S$.

\case{$\lambda$ and $\nu$ both have spiral points at both ends, and they have an intersection that cannot be removed by changing the directions of spiral points.}\label{spiral essential case}
Draw a purple arc between the spiral points of $\lambda$, following $\lambda$ closely except for the spirals.
Possibly the endpoints of the purple arc coincide, but the purple arc does not bound an unpunctured or once-punctured monogon because $\lambda$ is not contractible to a spiral point and because $\lambda$ is not an excluded curve in Definition~\ref{quasi lam def}.
Choose a spiral point of $\nu$ and label it $1$.
Label the spiral points of $\lambda$ as $2$ and $3$.
Let $\nu'$ be the part of $\nu$ from the chosen spiral point to the first intersection point of $\lambda$ and $\nu$ that cannot be removed by changing the directions of spiral points.

Draw a red arc that follows $\lambda$ closely from point $2$ without crossing it (except for the spiral) to $\nu'$, then follows $\nu'$ without crossing it (except for the spiral) to point~$1$.
Draw a blue arc from point $3$ to point $1$ closely following $\lambda$ and then $\nu'$.
These arcs are shown in the left picture of Figure~\ref{essential}.
\begin{figure}[ht]
\scalebox{1.1}{\includegraphics{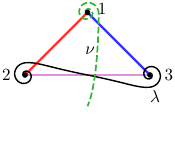}}\qquad\scalebox{1.1}{\includegraphics{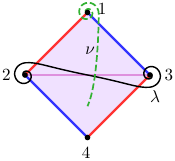}}
\caption{Illustrations for \ref{spiral essential case}}
\label{essential}
\end{figure}
Until further notice, ignore the directions of the spirals in the pictures.
Suppose the red arc bounds an unpunctured monogon.
Then points $1$ and $2$ coincide.
Furthermore, if the spiral directions of $\lambda$ and $\nu$ agree at this point, then this first intersection of $\nu$ with $\lambda$ can be removed by an isotopy deformation of $\nu$.
On the other hand, if spiral directions disagree at this point, then this first intersection can be removed by changing the directions of spiral points.
Either way, we have contradicted our construction of $\nu'$, so the red arc does not bound an unpunctured monogon.
By the same argument, the blue arc also does not bound an unpunctured monogon.
The red and blue arcs are not isotopic because if so, then $\lambda$ is an excluded curve in Definition~\ref{quasi lam def}.

We extend these three arcs to a triangulation.
In this triangulation, there is a second triangle having the purple arc as an edge.
This second triangle is not self-folded, because if so, then $\lambda$ is an excluded curve in Definition~\ref{quasi lam def}.
The situation is illustrated in the right picture of Figure~\ref{essential}, still ignoring directions of spirals.
Possibly point $4$ coincides with some or all of the points $1$, $2$, and $3$.
Let $U$ denote the red-blue-red-blue rectangle shown in the right picture of Figure~\ref{essential}.
After following $\nu'$, the curve $\nu$ exits $U$ either through the bottom blue arc or through the bottom red arc, possibly by spiraling into point~$4$.

Up to a global reorientation, we can assume that $\nu$ either spirals into point $4$ or exits through the bottom blue arc.
Suppose the bottom blue arc is isotopic to the top red arc.
If $\nu$ spirals into point $4$, then $\nu$ is an excluded curve in Definition~\ref{quasi lam def}.
If $\nu$ exits $U$ through the blue arc without spiraling into point $4$, then the situation is as pictured in Figure~\ref{essential isotopic}, still ignoring spiral directions.
\begin{figure}[ht]
\includegraphics{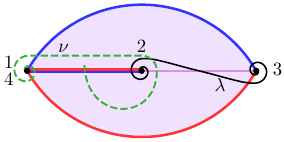}
\caption{Another illustration for \ref{spiral essential case}}
\label{essential isotopic}
\end{figure}
In this case, $\nu$ must spiral into point 2, so it only intersects $\lambda$, if at all, by spiraling the other direction into point $2$.
We conclude from these contradictions that the bottom blue arc is not isotopic to the top red arc.
Possibly, however, the two red arcs are isotopic to each other, the two blue arcs are isotopic to each other, and/or the bottom red arc is isotopic to the top blue arc.

In the triangulation containing these two triangles, there is another triangle sharing the top red arc as an edge, as shown in the left picture of Figure~\ref{essential alter}.
\begin{figure}[ht]
\scalebox{1.1}{\includegraphics{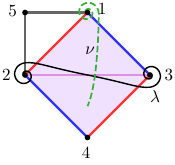}}\qquad\scalebox{1.1}{\includegraphics{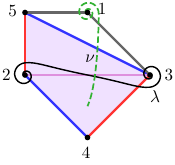}}\qquad\scalebox{1.1}{\includegraphics{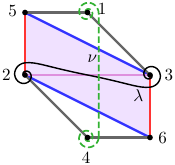}}
\caption{More illustrations for \ref{spiral essential case}}
\label{essential alter}
\end{figure}
Possibly this new triangle is self-folded, but if so, then the red arc is not one of the two edges that are folded onto each other.
We flip the red arc to produce a new red-blue-purple triangle as shown in the middle picture of Figure~\ref{essential alter}.
Let $U'$ be the red-blue-red-blue rectangle consisting of the two triangles having the purple arc as an edge (after the flip of the red edge).
Possibly the two red arcs of the original rectangle $U$ coincide.
In this case, the bottom triangle of $U$ is also affected by the flip of the red arc, and so $\nu$ exits $U'$ through its bottom blue arc, regardless of whether $\nu$ spirals into point $4$ or exits $U$ through the blue arc.  

If $\nu$ exits $U'$ through the blue arc, then we make no more flips and define $U''$ to be $U'$.
Otherwise, the two red arcs of $U$ don't coincide, the bottom triangle of $U'$ coincides with the bottom triangle of $U$, and $\nu$ spirals into point $4$.
In this case, the bottom red arc of $U'$ is not isotopic to the top blue arc of $U'$ for the same reason that the top red arc of $U$ is not isotopic to the bottom blue arc of $U$.
The bottom red and blue arcs of $U'$ are not isotopic (i.e.\ the bottom triangle of $U'$ is not self-folded) for the same reason that the top red and blue arcs of $U$ are not isotopic.
The two red arcs of $U'$ may be isotopic.
We flip the bottom red arc of $U'$, just as we flipped the top red arc of $U$, to obtain a new rectangle $U''$, composed of two triangles sharing the purple arc as an edge, such that $\nu$ enters through the top blue arc and exits through the bottom blue arc.
(The fact that $\nu$ enters $U''$ through the top blue arc follows from the fact that $\nu$ enters $U'$ through the top blue arc, even in the case where the two red arcs of $U'$ are isotopic.)
This construction of $U''$ is illustrated in the right picture of Figure~\ref{essential alter}.

If any red or blue arc defining $U''$ bounds a once-punctured monogon, we replace it by an arc from the vertex of the monogon to the puncture, tagged notched at the puncture.
(We have already ruled out the possibility that the purple arc $\gamma$ bounds a once-punctured monogon.)
The spiral directions of spiral points of $\nu$ are now irrelevant.
If the spiral direction of $\lambda$ at point $2$ disagrees with the direction shown in Figure~\ref{essential alter}, then we tag all arcs notched at point $2$ and similarly for point $3$.
We can do this even if points $2$ and $3$ coincide, because in that case, $\lambda$ has the same spiral directions at both ends.
We complete the tagged arcs to a tagged triangulation $T$, change all tags to plain (reversing spiral directions accordingly) and apply $\tau^{-1}$ to make a triangulation to calculate $b_\gamma(T,\lambda)$ and $b_\gamma(T,\nu)$ as in Definition~\ref{shear def}.
This latter triangulation contains the two triangles of $U''$, and the altered spiral directions are also as shown in the figure.

\case{$\lambda$ and $\nu$ both have spiral points at both ends, and only intersect where they share spiral points with opposite directions.
One spiral point of $\nu$ is not a spiral point of $\lambda$.}\label{spiral one not case}

\subcase{$\lambda$ has two distinct spiral points.}\label{spiral one not distinct case}
Number the spiral points so that $\lambda$ spirals into $1$ and $2$ and $\nu$ spirals into $2$ and $3$. 
Draw a red arc and a blue arc from $1$ to $3$ as shown in the left picture of Figure~\ref{3 spirals fig}.
\begin{figure}[ht]
\includegraphics{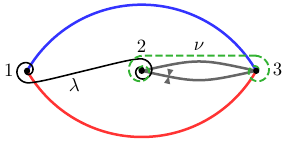}\qquad\includegraphics{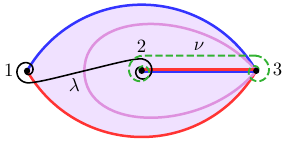}
\caption{Illustrations for \ref{spiral one not distinct case}}
\label{3 spirals fig}
\end{figure}
For the moment, ignore the directions of the spirals in the picture. 
Each arc follows $\lambda$ and then $\nu$ closely.
These two arcs are not isotopic to each other because otherwise $(\S,\M)$ is a sphere with $3$ punctures.
Also draw two gray arcs from $2$ to $3$, following $\nu$ closely and staying between the red arc and the blue arc.
Tag one arc plain at point $2$ and tag the other arc notched at point $2$.
None of these arcs bounds a monogon because points $1$, $2$, and $3$ are distinct.
Tag the arcs at points $1$ and/or $3$ notched if the spiral directions of $\lambda$ or $\nu$ disagree with Figure~\ref{3 spirals fig} at these points.
Complete to a tagged triangulation $T$.
If the spiral directions at point $2$ agree with Figure~\ref{3 spirals fig}, then we take $\gamma$ to be the gray arc tagged notched at point $2$.
If the spiral directions at point $2$ disagree with Figure~\ref{3 spirals fig}, then we take $\gamma$ to be the gray arc tagged plain at point $2$.
In either case, to calculate the shear coordinates $b_\gamma(T,\lambda)$ and $b_\gamma(T,\nu)$, we pass to a triangulation and spiral directions as in the right picture of Figure~\ref{3 spirals fig}.
The purple arc in the right picture is $\gamma$.
The other gray arc in the left picture, besides $\gamma$, becomes the arc in the right picture that we color red on one side and blue on the other side as in Figure~\ref{shear self-folded fig}.

\subcase{$\lambda$ has two coinciding spiral points.}\label{spiral one not coincide case}
The curve $\lambda$ has both spiral points in the same direction at the same point (labeled $1$ in Figure~\ref{curve and loop}).
\begin{figure}[ht]
\scalebox{1.1}{\includegraphics{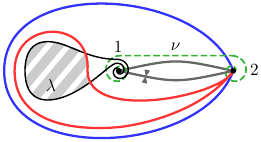}}\qquad\qquad\scalebox{1.1}{\includegraphics{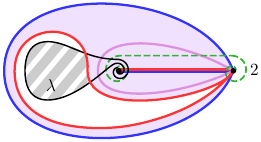}}
\caption{Illustrations for \ref{spiral one not coincide case}}
\label{curve and loop}
\end{figure}
We first draw a red arc as shown in the left picture in Figure~\ref{curve and loop}, starting at point $2$, following $\nu$ closely, crossing $\lambda$ twice near point $1$, following $\lambda$ closely back to the earlier part of the red arc, then following the red arc closely back to point $2$.
This red arc does not bound an unpunctured monogon because if so, then $\lambda$ is contractible to the puncture $1$.
Suppose this red arc bounds a once-punctured monogon.
Then the puncture is either point $1$ or a marked point in the region shaded striped gray in Figure~\ref{curve and loop}.
In either case, $\lambda$ is an excluded curve in Definition~\ref{quasi lam def}, so we conclude that the red arc does not bound a once-punctured monogon.
We next draw a blue arc as shown, following the red arc closely from point $2$ to point $1$ and then following $\nu$ closely back to point $2$.
If the blue arc bounds an unpunctured monogon, then $\lambda$ is an excluded curve in Definition~\ref{quasi lam def}.
If the blue arc bounds a once-punctured monogon, then replace it with an arc from point $2$ to the puncture, tagged notched at the puncture.
Now tag appropriately at points $1$ and $2$ to obtain the correct spiral directions and produce a triangulation, similarly to \ref{spiral one not distinct case}.
Part of the triangulation is shown in the right picture of Figure~\ref{curve and loop}.

\case{$\lambda$ and $\nu$ both have spiral points at both ends, and only intersect where they share spiral points with opposite directions.
The set of spiral points of $\lambda$ equals the set of spiral points of $\nu$.}\label{spiral equal case}

\subcase{The curves have two distinct spiral points.
They coincide, except that the spiral directions disagree in at least one of these spiral points.}\label{spiral equal two coincide case}
In fact, since $\lambda$ and $\nu$ are incompatible, the spiral directions must disagree at both points.
Draw a purple arc, as in Figure~\ref{two coincide fig}, following the two curves closely.
\begin{figure}[ht]
\scalebox{1.1}{\includegraphics{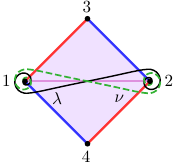}}
\caption{An illustration for \ref{spiral equal two coincide case}}
\label{two coincide fig}
\end{figure}

Since the purple arc has 2 distinct endpoints, it does not bound a monogon.
The purple arc can be completed to some triangulation.
If the purple arc constitutes two sides of a self-folded triangle in the triangulation, then we flip the third side to obtain a new triangulation in which the purple arc is an edge of two distinct triangles.
These two triangles are represented in Figure~\ref{two coincide fig}, although any of the points shown may coincide (except points $1$ and $2$), and various combinations of the arcs shown may coincide.
We apply $\tau$ to the triangulation.
The resulting tagged triangulation has plain tags at points $1$ and $2$ because the purple arc is not part of a self-folded triangle.
We change the plain tagging at point $1$ and/or point $2$ to notched tagging if the spiral directions at these points do not agree with Figure~\ref{two coincide fig}.
The tagged triangulation with these altered taggings is $T$.
When shear coordinates are calculated for the purple arc $\gamma$, we pass to a triangulation and spiral directions as shown in Figure~\ref{two coincide fig}.

\subcase{The curves have two distinct spiral points.
Spiral directions disagree at exactly one of these spiral points.
The two curves do not coincide, even when spiral directions are ignored.}\label{spiral equal two one not coincide case}
Label the two spiral points as $1$ and $2$.
Draw two gray arcs from $1$ to $2$, following $\lambda$ closely.
Tag these two arcs oppositely at point $2$.
Draw a red arc following $\lambda$ closely from point $1$ to point $2$ and then following $\nu$ closely from point $2$ back to point $1$.
Draw a blue arc similarly on the other side of $\lambda$ and $\nu$.
The red and blue arcs are not isotopic, because if so $(\S,\M)$ is a twice-punctured sphere.
Since $\lambda$ and $\nu$ don't coincide, even ignoring spiral directions, neither the red arc nor the blue arc bounds an unpunctured monogon.
If either or both arcs bounds a once-punctured monogon, then replace it with an arc from $1$ to the puncture, tagged notched at the puncture.

\begin{figure}[ht]
\scalebox{1.2}{\includegraphics{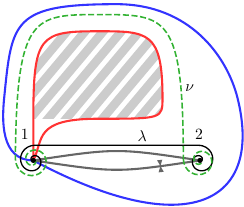}}\qquad\scalebox{1.2}{\includegraphics{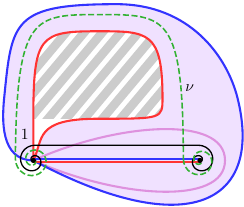}}
\caption{Illustrations for \ref{spiral equal two one not coincide case}}
\label{two spirals disagree once}
\end{figure}

Tag arcs notched at point $1$ if the spiral directions disagree with those shown in Figure~\ref{two spirals disagree once} and complete to  a tagged triangulation $T$.
If the spiral directions at point $2$ agree with Figure~\ref{two spirals disagree once}, then the gray arc tagged notched at point $2$ becomes the purple arc in the right picture of Figure~\ref{3 spirals fig} and the gray arc tagged plain at $2$ becomes the red/blue arc.
If the spiral directions at point $2$ disagree with Figure~\ref{two spirals disagree once}, the two gray arcs reverse roles.
The calculation of shear coordinates at the purple arc involves arcs and spiral directions as shown in the right picture of Figure~\ref{two spirals disagree once}.

\subcase{The curves have two distinct spiral points.
Spiral directions disagree at both of these spiral points.
The two curves do not coincide, even when spiral directions are ignored.}\label{spiral equal two two not coincide case}
Label the two spiral points as $1$ and $2$.
Draw a blue arc, as in the top-left picture in Figure~\ref{two spirals disagree twice}, following $\lambda$ closely.
\begin{figure}[ht]
\scalebox{1.1}{\includegraphics{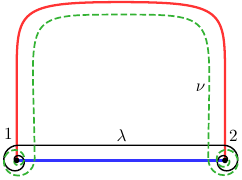}}\qquad\scalebox{1.1}{\includegraphics{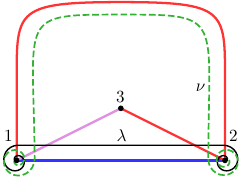}}\\[8pt]
\scalebox{1.1}{\includegraphics{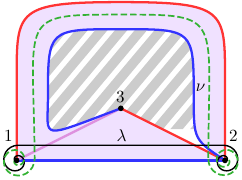}}\qquad\scalebox{1.1}{\includegraphics{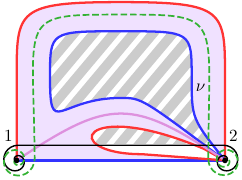}}
\caption{Illustrations for \ref{spiral equal two two not coincide case}}
\label{two spirals disagree twice}
\end{figure}
Draw a red arc, as shown, following $\nu$ closely.
The red and blue arcs can be completed to some triangulation.
This triangulation has two triangles containing the blue arc as an edge.
(The presence of the red edge rules out the possibility that a triangle is folded onto the blue edge.)
One of these triangles is shown in the top-right picture of Figure~\ref{two spirals disagree twice}.
Because points $1$ and $2$ don't coincide, we assume up to symmetry that points $1$ and $3$ don't coincide.
Thus the purple arc does not bound a monogon.
Possibly points $2$ and $3$ coincide, but the new red arc does not bound an unpunctured monogon because it is part of some triangulation.
If the new red arc bounds a once-punctured monogon, then we replace the new red arc with a tagged arc from point $2$ to the puncture.
For the moment, we assume that the purple arc is not isotopic to the original red arc.  
The case where the two are isotopic is dealt with separately below.

Now draw another blue arc from point $3$ to point $2$, closely following the purple arc to point $1$ and then closely following the red arc to point $2$.
This arc is shown in the bottom-left picture of Figure~\ref{two spirals disagree twice} in the case where points $2$ and $3$ do not coincide, and in the bottom-right picture in the case where points $2$ and $3$ coincide.
In the latter case, we need to consider the possibility that the new blue arc bounds an unpunctured or once-punctured monogon.  
But if the new blue arc bounds an unpunctured monogon, then the original red arc is isotopic to the purple arc, a possibility that we deal with separately below.
If the new blue arc bounds a once-punctured monogon, then replace it with an arc from point 2 to the puncture, tagged notched at the puncture. 
Now tag arcs at points $1$ and/or $2$ notched if the spiral directions at these points disagree with the figure, and complete to a tagged triangulation $T$.
To calculate shear coordinates $b_\gamma(T,\lambda)$ and $b_\gamma(T,\nu)$, we pass to a triangulation and spiral directions as shown in one of the bottom pictures of Figure~\ref{two spirals disagree twice}.

Finally, we must consider the case where the original red arc is isotopic to the purple arc.
Figure~\ref{two spirals disagree twice 2} illustrates this case.
\begin{figure}[ht]
\scalebox{1.1}{\includegraphics{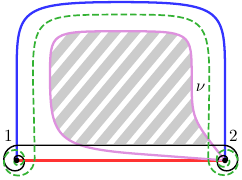}}\qquad\scalebox{1.1}{\includegraphics{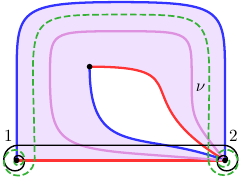}}
\caption{More illustrations for \ref{spiral equal two two not coincide case}}
\label{two spirals disagree twice 2}
\end{figure}
The purple arc is omitted in favor of the original red arc.
Arcs are also colored differently, with the red and blue arcs in the left picture of Figure~\ref{two spirals disagree twice 2} having switched colors from the top-left picture of Figure~\ref{two spirals disagree twice}.
The smaller red arc from the bottom-right picture of Figure~\ref{two spirals disagree twice} is now colored purple. 
We complete these three arcs to a triangulation.
In this triangulation, there is a second triangle having the purple arc as an edge, as illustrated in the right picture of Figure~\ref{two spirals disagree twice 2}.
The new blue and red arcs may be isotopic, in which case the purple arc bounds a once-punctured monogon.  
In this case, we replace the purple arc with an arc from point 2 to the puncture, tagged notched at the puncture.
Again we tag arcs at points $1$ and/or $2$ notched if the spiral directions at these points disagree with the figure, and complete to a tagged triangulation $T$.
When shear coordinates are calculated for the purple arc $\gamma$, we pass to a triangulation and spiral directions as shown in the right picture of Figure~\ref{two spirals disagree twice 2}.
The white space between the red and blue arcs is not shaded striped gray because the red and blue arcs may coincide.

\subcase{The curves have only one spiral point.
They coincide except that spiral directions disagree at their spiral point.}\label{spiral equal one coincide case}
Draw a purple arc following $\lambda$ (and thus~$\nu$) closely.
The purple arc does not bound an unpunctured or once-punctured monogon because $\lambda$ is not contractible to a puncture and is not an excluded curve in Definition~\ref{quasi lam def}.
We complete the purple arc to a triangulation $T^\circ$.
Since the purple arc does not have two distinct endpoints, it does not constitute two edges of a self-folded triangle.
Thus it is contained in two distinct triangles in $T^\circ$.
Furthermore, neither of these triangles is self-folded, because otherwise $\lambda$ is an excluded curve in Definition~\ref{quasi lam def}.
The relevant part of $T^\circ$ is shown in Figure~\ref{one coincide fig}.
\begin{figure}[ht]
\scalebox{1.1}{\includegraphics{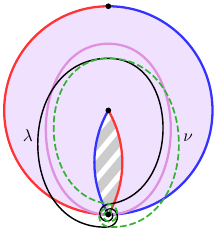}}
\caption{An illustration for \ref{spiral equal one coincide case}}
\label{one coincide fig}
\end{figure}
Up to switching $\lambda$ with $\nu$, the spirals are as shown in the figure.

\subcase{The curves have only one spiral point.
They do not coincide, even when spiral directions are ignored.}\label{spiral equal one not coincide case}
The common spiral point is labeled $1$.
Draw a gray arc following $\lambda$ closely and a purple arc following $\nu$ closely.
\begin{figure}[ht]
\scalebox{1.4}{\includegraphics{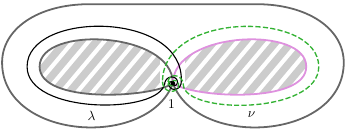}}\\[8pt]
\scalebox{1.4}{\includegraphics{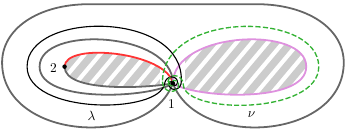}}\\[8pt]
\scalebox{1.4}{\includegraphics{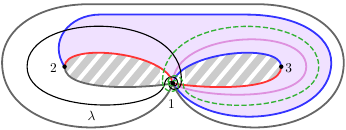}}
\caption{Illustrations for \ref{spiral equal one not coincide case}}
\label{one spirals disagree}
\end{figure}
Then draw a second gray arc following $\lambda$ and then $\nu$ as shown in the top picture of Figure~\ref{one spirals disagree}.
(There are two ways to draw the second gray arc relative to the spiral directions of $\lambda$ and $\nu$.
The right way is shown in the figure.)
Neither the first gray arc nor the purple arc bounds an unpunctured or once-punctured monogon because because neither $\lambda$ nor $\nu$ is contractible to a spiral point or an excluded curve in Definition~\ref{quasi lam def}.
The second gray arc does not bound an unpunctured monogon because $\lambda$ and $\nu$ don't coincide, even when spiral directions are ignored.
For the same reason, the first gray arc and the purple arc are not isotopy equivalent.
The second gray arc is not equivalent to the other two arcs because it is their product in the sense of the fundamental group and neither of the other two arcs is trivial.

These three distinct arcs form a triangle and can be completed to a triangulation.
In this triangulation, there is another triangle having the first gray arc as an edge.
This new triangle is not self-folded because the first gray edge does not bound a once-punctured monogon.
It is shown in the middle picture of Figure~\ref{one spirals disagree}, with one of the new edges red and one gray.
Point $2$ may coincide with point $1$.
We now have two triangles sharing an edge (the first gray edge).
We flip this shared edge, coloring the new edge blue, as shown in the bottom picture of Figure~\ref{one spirals disagree}.
This new configuration of five arcs can also be extended to a new triangulation.
The new triangulation has another triangle sharing the purple arc as an edge.
As before, the new triangle is not self-folded.
It is also shown in the bottom picture of Figure~\ref{one spirals disagree} with the new edges colored red and blue.
Point $3$ may coincide with point $1$ and/or point $2$.
The triangulation constructed has the desired properties with respect to shear coordinates at the purple arc $\gamma$.

}
%These braces are to make the change in section numbering local rather than global.
% Keep this at the end of Section {curve separation sec}

\section{The Null Tangle Property}\label{null tangle sec}
In this section, we formulate the Null Tangle Property of a marked surface, and show that it is equivalent to the assertion that the shear coordinates $\b(T,\lambda)$ of allowable curves $\lambda$ constitute a positive $R$-basis for $B(T)$.
We observe that the Curve Separation Property follows from the Null Tangle Property.
The observation suggests that it might be quite difficult to establish the Null Tangle Property in general, since the proof of the Curve Separation Property (Theorem~\ref{curve separation}) was so complex, even leaving out one family of marked surfaces.
Here, we prove the Null Tangle Property for a smaller family of marked surfaces.
In \cite{unitorus}, the Null Tangle Property is established for an additional marked surface: the once-punctured torus.

%sinceTAMS:Changed this definition to only allow integer weights
\begin{definition}[\emph{Weighted tangle of curves}]\label{weighted tangle def}
A \newword{(weighted) tangle (of curves)} in $(\S,\M)$ is a finite collection of curves, all distinct up to isotopy and all allowable in the sense of Definition~\ref{quasi lam def}, with no requirement of compatibility between curves and with each curve $\lambda$ having a \newword{weight} $w_\lambda\in\integers$.
We do not require weights to be positive as we did in the definitions of laminations and quasi-laminations (Definitions~\ref{lam def} and~\ref{quasi lam def}).
The \newword{support} of a tangle is the set of curves having nonzero weight.
%sinceTAMS: Thus also killed the definition of a rational tangle.
%A \newword{rational tangle} is a weighted tangle of curves with all weights in $\rationals$.
An \newword{straight tangle} is a tangle consisting of pairwise compatible curves.
(The difference between a quasi-lamination and a straight tangle is that the weights in a straight tangle are allowed to be negative.)
A tangle is \newword{trivial} if all weights are zero.

The \newword{weighted union} of tangles $\Xi_1$ and $\Xi_2$ is constructed by first taking the multiset union of $\Xi_1$ and $\Xi_2$.
Then, for each curve $\lambda$ that appears in both $\Xi_1$ and $\Xi_2$, we replace the two copies of $\lambda$ with a single copy of $\lambda$ whose weight is the sum of the original two weights.
\end{definition}

\begin{definition}[\emph{Shear coordinates and the Null Tangle Property}]\label{null tangle def}
A tangle $\Xi$ has shear coordinates $\b(T,\Xi)$ with respect to any tagged triangulation $T$, given by the weighted sum of vectors $\b(T,\lambda)$ over all of the curves $\lambda\in\Xi$.
Each $\b(T,\lambda)$ is calculated as in Definition~\ref{shear def}.
If $(\S,\M)$ has either one or more boundary components or two or more punctures or both, then a \newword{null tangle} in $(\S,\M)$ is a tangle $\Xi$ such that $\b(T,\Xi)$ is the zero vector for all tagged triangulations $T$.
If $(\S,\M)$ has no boundary components and exactly one puncture, then a \newword{null tangle} is a tangle $\Xi$ such that $\b(T,\Xi)$ is the zero vector for all tagged triangulations $T$ with all tags plain.
%sinceTAMS: Thus also killed the word ``rational'' here
A marked surface has the \newword{Null Tangle Property} if every null tangle is trivial.
\end{definition}

The main results of this section are the following theorems and an immediate corollary.

\begin{theorem}\label{null tangle theorem}
Suppose $R$ is $\integers$ or $\rationals$ and let $T$ be a tagged triangulation of $(\S,\M)$ (with all arcs tagged plain if $(\S,\M)$ has no boundary components and exactly one puncture).
The following are equivalent:
\begin{enumerate}[(i)]
\item \label{null tangle theorem null}
$(\S,\M)$ has the Null Tangle Property.
\item \label{null tangle theorem basis}
The shear coordinates of allowable curves form an $R$-basis for $B(T)$.
\item \label{null tangle theorem pos basis}
The shear coordinates of allowable curves form a positive $R$-basis for $B(T)$.
\end{enumerate}
\end{theorem}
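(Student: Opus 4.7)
The plan is to prove the equivalences cyclically as (iii) $\Rightarrow$ (ii) $\Rightarrow$ (i) $\Rightarrow$ (iii). The first step is immediate from definitions. The heart of the argument is the following dictionary: a weighted tangle $\Xi = \sum w_\lambda \lambda$ (with $w_\lambda \in \integers$) is a null tangle if and only if the formal expression $\sum w_\lambda \b(T,\lambda)$ is a $B(T)$-coherent linear relation. Since $B(T)$ has no zero row, Proposition~\ref{no zero no piecewise} reduces $B(T)$-coherence to the identity~\eqref{linear eta}, and Theorem~\ref{mut q-lam} (combined with Proposition~\ref{signed adjacency mut} and Theorem~\ref{cluster main} to identify mutation sequences with sequences of flips) converts this into the assertion $\b(T',\Xi)=\mathbf{0}$ for every $T'$ reachable from $T$ by flips. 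In the once-punctured no-boundary case, the flip-connected component of a plain-tagged $T$ is precisely the plain-tagged component of the tagged arc complex, which matches the modified statement of the NTP.

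Granted this dictionary, (ii) $\Rightarrow$ (i) is immediate: an $R$-basis is in particular $R$-independent, so any null tangle encodes a $B(T)$-coherent relation whose coefficients must all vanish. For (i) $\Rightarrow$ (iii), the same dictionary (in reverse) yields $R$-independence of the shear coordinates of allowable curves; in the case $R=\rationals$ one first clears denominators so that the coefficients become integers before applying the integer-weighted NTP. Positive spanning is where Theorem~\ref{q-lam bij} does the work: for any $\a \in R^n$ there is a quasi-lamination $L$ with $\b(T,L)=\a$, whose curves carry positive weights $w_\lambda \in R$. The formal relation $\a - \sum_{\lambda \in L} w_\lambda \b(T,\lambda) = \mathbf{0}$ is $B(T)$-coherent: applying any $\eta_\k^{B(T)}$ and invoking Theorem~\ref{mut q-lam} once for $L$ and once for each individual $\lambda$ shows that both sides mutate to $\b(T',L)=\sum w_\lambda \b(T',\lambda)$, and Proposition~\ref{no zero no piecewise} again dispatches identity~\eqref{piecewise eta}. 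Since every $w_\lambda$ is nonnegative, this exhibits $\a$ as a positive $R$-combination of shear coordinates, so together with $R$-independence one obtains a positive $R$-basis directly from the definition.

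The main obstacle is bookkeeping rather than technique: the heavy lifting has been done in Theorems~\ref{mut q-lam} and~\ref{q-lam bij} and Proposition~\ref{no zero no piecewise}, and what remains is to set up the dictionary between null tangles and $B(T)$-coherent relations carefully. The delicate points are the match between flip-connectivity and the restriction to plain-tagged triangulations in the once-punctured no-boundary case (so that the flip-reachable $T'$ exactly coincide with the triangulations appearing in the modified NTP), and the clearing of denominators needed to pass from rational coefficients in $B(T)$-coherent relations to integer weights in the NTP statement.
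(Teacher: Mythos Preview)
Your argument is correct and takes a genuinely more direct route than the paper's. Both proofs share the easy implications (iii) $\Rightarrow$ (ii) and (ii) $\Rightarrow$ (i), the latter via the dictionary between null tangles and $B(T)$-coherent relations (this is the paper's Propositions~\ref{null tangle is B coher rel} and~\ref{null tangle prop}, which you essentially reconstruct). The difference lies in (i) $\Rightarrow$ (iii). The paper first deduces the Curve Separation Property from the Null Tangle Property (Corollary~\ref{null implies sep}, via Propositions~\ref{no 1 2} and~\ref{null disorder 3 curve sep}) and then invokes Theorem~\ref{curves pos basis}, which in turn rests on the description of the rational part of the mutation fan (Theorem~\ref{rat FB surfaces}) and on Proposition~\ref{positive cone basis}. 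You bypass this entire chain: you obtain positive spanning directly from Theorem~\ref{q-lam bij}, using that Theorem~\ref{mut q-lam} applies both to a quasi-lamination $L$ as a whole and to each constituent curve, so that $\eta_\k^{B(T)}(\b(T,L)) = \b(T',L) = \sum_\lambda w_\lambda\,\b(T',\lambda) = \sum_\lambda w_\lambda\,\eta_\k^{B(T)}(\b(T,\lambda))$ for every sequence $\k$. In fact your spanning argument is unconditional---it nowhere uses the Null Tangle Property---so you have implicitly shown that the shear coordinates of allowable curves are always a positive $R$-spanning set, a fact the paper only obtains under the Curve Separation hypothesis. Your route is shorter and more elementary for this specific theorem; the paper's route is natural in context because it threads through the mutation fan and the Curve Separation Property, the paper's central objects.
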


\begin{theorem}\label{poly grow null tangle}
If $(\S,\M)$ is a sphere having $b$ boundary components and $p$ punctures with $b+p\le3$, then $(\S,\M)$ has the Null Tangle Property.
\end{theorem}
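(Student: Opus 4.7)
By Theorem~\ref{null tangle theorem}, the Null Tangle Property for $(\S,\M)$ is equivalent to the assertion that the shear coordinates $\b(T,\lambda)$ of allowable curves form an $R$-basis for $B(T)$. Each of the surfaces in the statement is covered by Theorem~\ref{curve separation}, so the Curve Separation Property holds, and Theorem~\ref{curves pos basis}(\ref{indep allowable}) reduces the problem further: $R$-independence of the shear coordinates of allowable curves is enough to upgrade them to a positive $R$-basis. Unwinding $R$-independence via Theorem~\ref{mut q-lam} and Proposition~\ref{no zero no piecewise} (applicable because $B(T)$ has no zero rows), the entire proof reduces to showing that, for each listed $(\S,\M)$, there is no nontrivial null tangle $\Xi=\sum_{\lambda}w_{\lambda}\lambda$.

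For each surface I would fix a convenient tagged triangulation $T_0$ and classify the allowable curves. The disks with $0$, $1$, or $2$ punctures and the pair of pants admit a finite or explicitly parameterized list of curves (corresponding to the finite mutation types $A$, $D$, and the exceptional type for a sphere with three boundary components). The annulus, with or without a puncture, has infinite families of allowable curves parameterized by the number and direction of spirals, as in Example~\ref{affine A2 surface}. In every case the goal would be to identify, for each curve $\lambda$ appearing in the support of a putative null tangle, a ``distinguishing'' entry $b_{\gamma_\lambda}(T_\lambda,\lambda)$ of the shear coordinates with respect to some tagged triangulation $T_\lambda$, chosen so that the condition $\b(T_\lambda,\Xi)=\mathbf{0}$ constrains $w_{\lambda}$ in terms of weights of ``simpler'' curves. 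Iterating from the most spiraled or most wound curves of the support inward, one would force all $w_{\lambda}$ to vanish. This parallels the case analysis of Section~\ref{curve separation sec}, but applied simultaneously to the entire support of $\Xi$ rather than to one pair at a time.

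The main obstacle is the infinite-family cases, namely the annulus and the once-punctured annulus: a single triangulation cannot distinguish an entire family of spiraling curves at once. The plan is to use flip sequences that progressively increase the amount of spiraling of a chosen reference arc, and to exploit the fact that on these surfaces the shear coordinates of allowable curves of growing spiraling complexity grow correspondingly in predictable entries. This is the ``polynomial growth'' phenomenon alluded to in the name of the theorem: on each of the listed surfaces, only finitely many allowable curves have shear coordinates in any bounded region, so the support of a null tangle consists of curves of bounded maximal complexity, and one can induct on that complexity to strip away the highest-complexity curves from the support until nothing is left. The case of the pair of pants is intermediate; it is finite mutation type but not finite type, and I expect the separation arguments from Section~\ref{curve separation sec} to apply in a finite, though delicate, case analysis.
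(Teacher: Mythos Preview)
Your reduction to ruling out nontrivial null tangles is correct, and the strategy of singling out each curve via a distinguishing shear coordinate is essentially the content of Proposition~\ref{one positive or one negative}. But the route you sketch---inducting on spiraling complexity and handling the infinite families in the annulus cases by chasing flip sequences---is considerably more laborious than what the paper actually does, and your plan for the infinite families is not yet a proof: you would have to exhibit, for each curve of maximal winding in the support, a triangulation in which it is the \emph{only} curve in the support with a given sign in some coordinate, and that uniqueness is exactly the delicate point.

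The paper's argument sidesteps the infinite families entirely. First, three short propositions (Propositions~\ref{null tangle two boundary}, \ref{null tangle boundary puncture}, and~\ref{null tangle one boundary}) use Proposition~\ref{one positive or one negative} together with the specific triangulations built in Cases~\ref{two boundary case}, \ref{boundary spiral case}, and~\ref{one boundary case} of the proof of Theorem~\ref{curve separation} to show that no curve touching a boundary segment can lie in the support of a null tangle. Since every curve in the infinite spiraling families has an endpoint on the boundary, this removes them all at once. What remains on each of the listed surfaces is a \emph{finite} set of closed or doubly-spiraling curves (at most five, depending on the surface), which one checks can be partitioned into at most two pairwise-compatible subsets. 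The paper then invokes the notion of \newword{disorder} of a tangle and Proposition~\ref{no 1 2}: since the Curve Separation Property holds, null tangles of disorder $1$ or $2$ cannot exist, so the remaining tangle---of disorder at most $2$---must be trivial.

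So your outline is viable in principle, but the paper's two structural devices (eliminating boundary curves wholesale, then bounding disorder) replace your proposed induction on complexity with a short finite check, and in particular they avoid ever tracking the infinite families directly.
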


The hypothesis of Theorem~\ref{poly grow null tangle} is that $(\S,\M)$ is a disk with $0$, $1$, or $2$ punctures, an annulus with $0$ or $1$ punctures, or a sphere with three boundary components.

\begin{cor}\label{poly grow univ}
If $(\S,\M)$ is a sphere having $b$ boundary components and $p$ punctures with $b+p\le3$, then the shear coordinates of allowable curves form a positive $R$-basis for~$B$, for $R=\integers$ or $\rationals$.
\end{cor}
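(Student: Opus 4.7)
The plan is to derive this corollary as a short combination of Theorem~\ref{poly grow null tangle} and Theorem~\ref{null tangle theorem}; essentially no new work is required beyond assembling these two ingredients, since the substantive content (the Null Tangle Property for the listed surfaces) has already been extracted into Theorem~\ref{poly grow null tangle}.

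First I would fix a tagged triangulation $T$ of $(\S,\M)$. Observe that every surface in the hypothesis either has at least one boundary component (disk with $\le 2$ punctures, annulus with $\le 1$ puncture, sphere with three boundary components) or has at least two punctures; in particular, none of them is the unique excluded case of a surface without boundary and with exactly one puncture. Thus the clause ``(with all arcs tagged plain if $(\S,\M)$ has no boundary components and exactly one puncture)'' in Theorem~\ref{null tangle theorem} places no restriction, and $T$ may be chosen arbitrarily.

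Next I would apply Theorem~\ref{poly grow null tangle} to conclude that $(\S,\M)$ satisfies the Null Tangle Property, i.e.\ condition~\eqref{null tangle theorem null} of Theorem~\ref{null tangle theorem} holds. The implication \eqref{null tangle theorem null}$\Rightarrow$\eqref{null tangle theorem pos basis} of Theorem~\ref{null tangle theorem} then yields that, for $R = \integers$ or $\rationals$, the shear coordinates $\b(T,\lambda)$ of the allowable curves $\lambda$ form a positive $R$-basis for $B(T)$, which is exactly the conclusion of the corollary.

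The only conceivable obstacle is the bookkeeping issue just flagged, namely verifying that the ``one puncture, no boundary'' exception in the statement of Theorem~\ref{null tangle theorem} does not apply here; this is immediate from the hypothesis $b+p \le 3$ together with the exclusion of spheres with fewer than four punctures in Definition~\ref{S M def}, which forces $b \ge 1$ whenever $p \le 2$. The genuinely difficult work has all been absorbed into Theorem~\ref{poly grow null tangle} (the case analysis establishing the Null Tangle Property) and Theorem~\ref{null tangle theorem} (the equivalence with the positive $R$-basis property), so no further argument is needed.
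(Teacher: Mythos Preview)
Your proposal is correct and matches the paper's approach: the paper presents Corollary~\ref{poly grow univ} as an immediate consequence of Theorems~\ref{null tangle theorem} and~\ref{poly grow null tangle}, with no separate proof given. Your additional bookkeeping about the ``no boundary, one puncture'' exception is sound (indeed $b\ge 1$ is forced by Definition~\ref{S M def} together with $b+p\le 3$) and merely makes explicit what the paper leaves implicit.
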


\begin{remark}\label{poly grow remark}
Comparing with \cite[Proposition~11.2]{cats1}, we see that the marked surfaces identified in Theorem~\ref{poly grow null tangle} and Corollary~\ref{poly grow univ} are precisely those whose associated cluster algebra has polynomial growth.
(See \cite[Section~11]{cats1}.)
Given that the Null Tangle Property is established for an additional surface in \cite{unitorus}, the connection to polynomial growth is probably coincidental.
\end{remark}

\begin{remark}\label{finite null tangle remark}
The marked surfaces of Theorem~\ref{poly grow null tangle} and Corollary~\ref{poly grow univ} include those of finite type in the sense of \cite[Section~1.3]{ca2} (the cases of an unpunctured or once-punctured disk).
We recover a weaker version of \cite[Proposition~17.5]{cats2}, giving a proof entirely in terms of triangulated surfaces.
\end{remark}

\begin{remark}\label{affine null tangle remark} 
The marked surfaces of Theorem~\ref{poly grow null tangle} and Corollary~\ref{poly grow univ} also include those of affine type in the sense of \cite[Definition~10.14]{universal} %\margin{This is Definition {def affine type}} 
(the cases of a twice-punctured disk or unpunctured annulus).
For exchange matrices $B$ of affine type, it is conjectured \cite[Conjecture~10.15]{universal} that the $\g$-vectors associated to $B^T$, together with an additional integer vector, are a positive $R$-basis for $B$, for any $R$.
The results of this section establish the marked surfaces case of \cite[Conjecture~10.15]{universal}, %\margin{This is Conjecture {affine univ conj}}
 using Proposition~\ref{g surfaces} and the fact that there is a unique allowable closed curve in these cases.
 Cf. Example~\ref{affine A2 surface}.
\end{remark}

%sinceTAMS: Added following prose and proposition
The point of defining null tangles is the connection to $B$-coherent linear relations.

\begin{prop}\label{null tangle is B coher rel}
A tangle $\Xi$ in $(\S,\M)$ is null if and only if $\sum_{\lambda\in\Xi}w_\lambda\b(T,\lambda)$ is a $B(T)$-coherent linear relation for any tagged triangulation $T$ (with all tags plain if $(\S,\M)$ has no boundary components and exactly one puncture).
\end{prop}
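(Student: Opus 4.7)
The plan is to use the mutation-equivariance of shear coordinates (Theorem~\ref{mut q-lam}) to turn the quantification ``for all tagged triangulations $T'$'' (appearing in the definition of null) into the quantification ``for all mutation sequences $\k$'' (appearing in the definition of $B$-coherent relation), and to use the absence of zero rows in $B(T)$ to eliminate the piecewise condition \eqref{piecewise eta}.

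The main reduction comes at the outset: since the signed adjacency matrix $B(T)$ has no zero row (as recorded in Section~\ref{surface sec}, just before Proposition~\ref{signed adjacency mut}), Proposition~\ref{no zero no piecewise} says that $\sum_{\lambda\in\Xi}w_\lambda\b(T,\lambda)$ is $B(T)$-coherent iff condition~\eqref{linear eta} alone holds for every sequence $\k$. So I only need to show the equivalence
\[
\b(T',\Xi)=\mathbf{0}\text{ for all permitted }T' \;\;\Longleftrightarrow\;\; \sum_{\lambda\in\Xi}w_\lambda\,\eta^{B(T)}_\k(\b(T,\lambda))=\mathbf{0}\text{ for all }\k.
\]

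To connect the two sides, I will iterate Theorem~\ref{mut q-lam}: for any sequence $\k=k_q,\dots,k_1$ of arc indices in $T$, let $T_\k$ denote the tagged triangulation obtained from $T$ by the corresponding sequence of tagged flips, and let $B_i$ be defined as in \eqref{eta def}. Proposition~\ref{signed adjacency mut} guarantees that the matrix $B(T_\k)$ produced after $i$ flips coincides with $B_{i+1}$, so Theorem~\ref{mut q-lam} applies at each step. Composing per definition \eqref{eta def} yields $\b(T_\k,\lambda)=\eta^{B(T)}_\k(\b(T,\lambda))$ for every allowable curve $\lambda$, and therefore by linearity of the weighted sum,
\[
\b(T_\k,\Xi)=\sum_{\lambda\in\Xi}w_\lambda\,\b(T_\k,\lambda)=\sum_{\lambda\in\Xi}w_\lambda\,\eta^{B(T)}_\k(\b(T,\lambda)).
\]
This single identity gives both directions of the proposition, provided the collection of $T_\k$ coincides with the collection of ``permitted'' tagged triangulations in Definition~\ref{null tangle def}.

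That last matching is where the parenthetical hypothesis enters, and it is the only place one needs to be careful. When $(\S,\M)$ has a boundary component or two or more punctures, the dual graph of the tagged arc complex is connected (Definition~\ref{complex and flips}), so every tagged triangulation $T'$ equals $T_\k$ for some $\k$, and ``$\b(T',\Xi)=\mathbf{0}$ for all $T'$'' is literally the same quantification as ``\eqref{linear eta} holds for all $\k$.'' When $(\S,\M)$ has no boundary and exactly one puncture, I pick $T$ all-tags-plain; then the flip-orbit $\{T_\k\}$ is exactly the all-plain component of the dual graph, which is exactly the set of triangulations used in the definition of null tangle in this case. Thus the two quantifications still match, and combining this with the reduction to \eqref{linear eta} from the first paragraph completes the proof. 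There is no real obstacle — the content of the proposition is simply the packaging together of Theorem~\ref{mut q-lam}, Proposition~\ref{no zero no piecewise}, and the connectivity statement from Definition~\ref{complex and flips}.
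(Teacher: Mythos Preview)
Your proof is correct and follows essentially the same approach as the paper: both invoke Theorem~\ref{mut q-lam} to translate between shear coordinates at different triangulations and mutation maps, and both use Proposition~\ref{no zero no piecewise} (via the fact that $B(T)$ has no zero row) to reduce $B(T)$-coherence to condition~\eqref{linear eta} alone. Your version is more explicit about the connectivity of the tagged flip graph (and the special handling of the once-punctured closed case), which the paper's two-sentence proof leaves implicit.
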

\begin{proof}
By Theorem~\ref{mut q-lam}, a tangle $\Xi$ is null if and only if $\sum_{\lambda\in\Xi}w_\lambda\b(T,\lambda)$ is a nontrivial linear relation among the shear coordinates of allowable curves that is preserved (in the sense of \eqref{linear eta}) under mutation maps at $B(T)$.
Now Proposition~\ref{no zero no piecewise} says that $\Xi$ is null if and only if $\sum_{\lambda\in\Xi}w_\lambda\b(T,\lambda)$ is a $B(T)$-coherent linear relation.
\end{proof}

%sinceTAMS: rephrased next sentence and deleted proof of Prop. {null tangle prop}
%in light of Prop {null tangle is B coher rel}
Proposition~\ref{null tangle is B coher rel} implies the following rephrasing of the Null Tangle Property. 

\begin{prop}\label{null tangle prop}
Suppose $R$ is $\integers$ or $\rationals$ and let $T$ be a tagged triangulation of $(\S,\M)$ (with all arcs tagged plain if $(\S,\M)$ has no boundary components and exactly one puncture).
Then $(\S,\M)$ has the Null Tangle Property if and only if the shear coordinates of allowable curves constitute an $R$-independent set for $B(T)$.  
\end{prop}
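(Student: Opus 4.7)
The plan is to deduce Proposition~\ref{null tangle prop} essentially directly from Proposition~\ref{null tangle is B coher rel}, with a small scaling argument to handle the case $R=\rationals$. The key observation is that a tangle is, by definition, a finite formal $\integers$-linear combination of distinct allowable curves, and Proposition~\ref{null tangle is B coher rel} translates the condition ``null'' into the condition ``$B(T)$-coherent linear relation among shear coordinates.'' Thus the Null Tangle Property is literally the statement that the only $B(T)$-coherent linear relation with \emph{integer} coefficients among the shear coordinates of allowable curves is the trivial one.

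For the forward direction, suppose $(\S,\M)$ has the Null Tangle Property. Let $\lambda_1,\ldots,\lambda_m$ be distinct allowable curves and suppose $\sum_{i=1}^m c_i \b(T,\lambda_i)$ is a $B(T)$-coherent linear relation with $c_i\in R$. If $R=\integers$, the collection $\Xi=\{(\lambda_i,c_i)\}$ is a tangle, and Proposition~\ref{null tangle is B coher rel} shows that $\Xi$ is null; hence all $c_i=0$. If $R=\rationals$, choose a positive integer $N$ clearing denominators so that $Nc_i\in\integers$ for each $i$. Equations \eqref{linear eta} and \eqref{piecewise eta} are homogeneous in the coefficients, so $\sum Nc_i\,\b(T,\lambda_i)$ is again a $B(T)$-coherent linear relation, and the integer case forces $Nc_i=0$, so $c_i=0$. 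This proves $R$-independence.

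For the reverse direction, suppose the shear coordinates $\b(T,\lambda)$ of allowable curves form an $R$-independent set for $B(T)$. Let $\Xi$ be a null tangle; its weights $w_\lambda$ are integers, hence elements of $R$. By Proposition~\ref{null tangle is B coher rel}, $\sum_{\lambda\in\Xi} w_\lambda\b(T,\lambda)$ is a $B(T)$-coherent linear relation. By $R$-independence, $w_\lambda=0$ for every $\lambda\in\Xi$, so $\Xi$ is trivial. Thus the Null Tangle Property holds.

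There is no genuine obstacle here; the content has already been concentrated in Proposition~\ref{null tangle is B coher rel}, which in turn rests on Theorem~\ref{mut q-lam} and Proposition~\ref{no zero no piecewise} (applicable because signed adjacency matrices $B(T)$ have no zero row). The only point requiring any care is the passage from $R=\rationals$ to integer weights, and this is handled by the homogeneity of the conditions defining a $B(T)$-coherent linear relation.
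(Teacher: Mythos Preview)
Your proof is correct and follows the same approach as the paper, which simply states that the proposition is an immediate consequence of Proposition~\ref{null tangle is B coher rel}. You have spelled out the routine details the paper leaves implicit, including the clearing-denominators step needed to pass from integer to rational coefficients.
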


%sinceTAMS: added following prose and prop.
We now quote a version of \cite[Proposition~4.12]{universal}. %\margin{This is Prop {one positive or one negative}}
The proposition is stated in \cite{universal} in terms of $B$-coherent linear relations, but we use Proposition~\ref{null tangle is B coher rel} to state a specialization of the proposition in terms of tangles.
\begin{prop}\label{one positive or one negative}
Let $\Xi$ be a tangle in $(\S,\M)$.
Suppose for some tagged triangulation $T$ (with all tags plain if $(\S,\M)$ has no boundary components and exactly one puncture), for some tagged arc $\gamma$ in $T$, and for some curve $\lambda$ in $\Xi$ that $b_\gamma(T,\lambda)$ is strictly positive (resp. strictly negative) and that $b_\gamma(T,\nu)$ is nonpositive (resp. nonnegative) for every other $\nu\in\Xi$.
Then $w_\lambda=0$.
\end{prop}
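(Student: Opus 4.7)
The plan is to recognize this as a direct translation, via Proposition~\ref{null tangle is B coher rel}, of \cite[Proposition~4.12]{universal} about $B$-coherent linear relations.  Since the conclusion $w_\lambda=0$ only makes sense when $\Xi$ is a null tangle (as is implicit from the surrounding discussion and from the sentence introducing the proposition), I first invoke Proposition~\ref{null tangle is B coher rel} to turn the null hypothesis on $\Xi$ into the assertion that $\sum_{\mu\in\Xi}w_\mu\,\b(T,\mu)$ is a $B(T)$-coherent linear relation for the tagged triangulation $T$ provided by the hypothesis.  The problem then reduces to a purely linear-algebraic fact about $B$-coherent relations: one in which a single vector is strictly positive in some fixed coordinate while all others are nonpositive there must assign zero coefficient to that vector.

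The execution is a one-line consequence of the two defining conditions~\eqref{linear eta} and~\eqref{piecewise eta} applied to the empty sequence $\mathbf{k}$.  Reading off the $\gamma$-entry, \eqref{linear eta} gives $\sum_{\mu\in\Xi}w_\mu\,b_\gamma(T,\mu)=0$, and \eqref{piecewise eta} gives $\sum_{\mu\in\Xi}w_\mu\,\min(b_\gamma(T,\mu),0)=0$.  Subtracting and using $x-\min(x,0)=\max(x,0)$ produces
\[
\sum_{\mu\in\Xi}w_\mu\,\max(b_\gamma(T,\mu),0)\;=\;0.
\]
By hypothesis, every summand with $\mu\neq\lambda$ vanishes (since $b_\gamma(T,\mu)\le 0$ there), while the $\mu=\lambda$ summand equals $w_\lambda\,b_\gamma(T,\lambda)$.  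Since $b_\gamma(T,\lambda)>0$, the identity collapses to $w_\lambda\,b_\gamma(T,\lambda)=0$, forcing $w_\lambda=0$.  The parenthetical variant, with $b_\gamma(T,\lambda)<0$ and $b_\gamma(T,\nu)\ge 0$ for $\nu\neq\lambda$, follows by applying the same argument to the tangle obtained from $\Xi$ by negating every weight, or equivalently by substituting the identity $-\max(-x,0)=\min(x,0)$ in place of the one used above.

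There is no substantive obstacle here: the proof is a direct unpacking of the defining conditions of a $B$-coherent linear relation, and the essential point is that the presence of the piecewise-linear condition~\eqref{piecewise eta} alongside the linear condition~\eqref{linear eta} is what permits separating out the $\mu=\lambda$ term.  All of the real work has already been done in proving Proposition~\ref{null tangle is B coher rel} (and in the underlying \cite[Proposition~4.12]{universal}); here one simply specializes both conditions to the trivial mutation sequence and observes that the hypothesis turns the resulting positive-part sum into a single nonzero term.
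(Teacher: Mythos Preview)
Your argument is correct and matches the paper's approach exactly. The paper does not give an independent proof of this proposition; it simply quotes it as a translation, via Proposition~\ref{null tangle is B coher rel}, of \cite[Proposition~4.12]{universal}, and you have accurately supplied the short computation that underlies that cited result (specializing \eqref{linear eta} and \eqref{piecewise eta} to the empty sequence and subtracting). You are also right that the hypothesis ``$\Xi$ is a null tangle'' is implicit in the statement as printed---without it the conclusion is meaningless---and the paper's introductory sentence and subsequent applications confirm this reading.
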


We now explain the connection between the Null Tangle Property and the Curve Separation Property.
The \newword{disorder} of a tangle $\Xi$ is the smallest number $k$ such that, after deleting curves with weight zero, $\Xi$ is a weighted union of $k$ straight tangles.
Then the Null Tangle Property is the statement that the disorder of a null tangle must be $0$.
One approach to establishing the property would be to rule out null tangles of disorder $k$ for all positive $k$.
The Curve Separation Property is closely related to ruling out null tangles of certain small disorders.

\begin{prop}\label{no 1 2}
Null tangles of disorder $1$ do not exist in any marked surface.
If $(\S,\M)$ has the Curve Separation Property, then null tangles of disorder $2$ do not exist in $(\S,\M)$.
\end{prop}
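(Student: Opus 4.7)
My plan is to handle the two assertions separately. The disorder-$1$ claim uses only Theorem~\ref{q-lam bij}, while the disorder-$2$ claim genuinely uses the Curve Separation Property, via Proposition~\ref{FBT compatible} and Proposition~\ref{one positive or one negative}.

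For the disorder-$1$ case, suppose $\Xi$ is a nontrivial null tangle which is (after discarding zero-weight curves) a single straight tangle. Separating curves of $\Xi$ by sign of weight, I would write $\Xi = L - L'$, where $L$ consists of the positively weighted curves (with their weights) and $L'$ of the negatively weighted curves with the absolute values of their weights. Since all curves of $\Xi$ are pairwise compatible, both $L$ and $L'$ are quasi-laminations, and their supports are disjoint because each curve has a single weight in $\Xi$. The null condition applied at one tagged triangulation $T$ (with plain tags if required) reads $\b(T,L) = \b(T,L')$, and Theorem~\ref{q-lam bij} asserts that the shear-coordinate map is a bijection, so $L = L'$ as quasi-laminations. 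Disjoint supports then force both to be empty, contradicting nontriviality.

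For the disorder-$2$ case I would argue by contradiction. First choose a decomposition $\Xi = \Xi_1 \sqcup \Xi_2$ as a weighted union of two straight tangles on \emph{disjoint} supports: any curve of $\Xi$ compatible with all others can be assigned freely to either $\Xi_i$, while any curve with an incompatible partner is forced into exactly one $\Xi_i$, so a disjoint decomposition always exists. Because the disorder is not $1$, some pair $\lambda \in \Xi_1$ and $\nu \in \Xi_2$ is incompatible, and the Curve Separation Property then supplies a tagged triangulation $T$ and an arc $\gamma \in T$ with $b_\gamma(T,\lambda) > 0 > b_\gamma(T,\nu)$. Proposition~\ref{FBT compatible}, valid under CSP, places the shear vectors of every curve in $\Xi_1$ into a common $B(T)$-cone with $\b(T,\lambda)$, forcing $b_\gamma(T,\mu) \ge 0$ for every $\mu \in \Xi_1$; symmetrically $b_\gamma(T,\mu') \le 0$ for every $\mu' \in \Xi_2$. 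I would then attempt to apply Proposition~\ref{one positive or one negative}: if $\lambda$ is the unique curve of $\Xi$ with $b_\gamma(T,\cdot)$ strictly positive, the proposition yields $w_\lambda = 0$, contradicting the assumption that $\Xi$ has no zero-weight curves; symmetrically if $\nu$ is the unique strictly negative curve.

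The main obstacle is the remaining case, where several curves of $\Xi_1$ are strictly positive at $\gamma$ and several curves of $\Xi_2$ are strictly negative. I expect to handle it by choosing $(\lambda,\nu)$ extremally, using the Curve Separation Property repeatedly on further incompatible pairs among the strictly-signed curves, and inducting on the number of curves in $\Xi$ to terminate the process. Confirming that each such peel-off step leaves a strictly smaller null tangle of disorder at most $2$, so that the induction actually closes, is the delicate step.
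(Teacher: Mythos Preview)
Your disorder-$1$ argument is correct and equivalent to the paper's: the paper cites the simpliciality of $\F_\rationals(T)$ from Theorem~\ref{rat FB surfaces}, but that simpliciality is proved precisely via the bijection of Theorem~\ref{q-lam bij} that you invoke directly.

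For disorder~$2$ you have a genuine gap. Proposition~\ref{one positive or one negative} applies only when a \emph{single} curve of $\Xi$ has the strict sign at $\gamma$; when several curves of $\Xi_1$ have strictly positive $\gamma$-coordinate and several of $\Xi_2$ strictly negative, the proposition says nothing. Your proposed rescue---repeatedly applying the Curve Separation Property to further incompatible pairs and inducting on $|\Xi|$---does not close. Each new incompatible pair produces a \emph{different} tagged triangulation and arc, unrelated to the previous one, so you never isolate a curve at the original $(T,\gamma)$; and you cannot shrink $\Xi$ until some weight is shown to vanish, which is exactly the conclusion you need. There is no extremal choice of $(\lambda,\nu)$ that forces the uniqueness hypothesis of Proposition~\ref{one positive or one negative}.

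The paper bypasses Proposition~\ref{one positive or one negative} entirely and finishes with a single flip. After establishing (exactly as you do) that every curve of $\Xi_1$ has $b_\gamma(T,\cdot)\ge 0$ and every curve of $\Xi_2$ has $b_\gamma(T,\cdot)\le 0$, it flips $\gamma$ to obtain $T'$. Because the curves in each $\Xi_i$ are sign-coherent in the $\gamma$-entry, Theorem~\ref{mut q-lam} applied curve by curve gives $\b(T',\Xi_i)=\eta_\gamma^{B(T)}(\b(T,\Xi_i))$. Nullity at $T$ forces $\b(T,\Xi_1)=-\b(T,\Xi_2)$, and these two vectors have strictly opposite signs in the $\gamma$-entry; since the $\gamma$-row of $B(T)$ is nonzero, the two linear pieces of the piecewise-linear map $\eta_\gamma^{B(T)}$ differ on them, giving $\b(T',\Xi_1)\neq -\b(T',\Xi_2)$ and contradicting nullity at $T'$. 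No curve ever needs to be isolated.
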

\begin{proof}
Theorem~\ref{rat FB surfaces}, specifically the fact that $\F_\rationals(T)$ is simplicial, implies that a null tangle cannot have disorder $1$.
Suppose $(\S,\M)$ has the Curve Separation Property.
If a null tangle $\Xi$ has disorder $2$, then it is a weighted union of two nontrivial straight tangles $\Xi_1$ and $\Xi_2$, and there is a curve $\lambda$ in the support of $\Xi_1$ and a curve $\nu$ in the support of $\Xi_2$ such that $\lambda$ and $\nu$ are not compatible.
The Curve Separation Property says that there is a tagged triangulation $T$ and an arc $\gamma$ (with all arcs tagged plain if $(\S,\M)$ has no boundary components and exactly one puncture) such that $b_\gamma(T,\lambda)$ and $b_\gamma(T,\nu)$ have strictly opposite signs.
Without loss of generality, take $b_\gamma(T,\lambda)$ positive and $b_\gamma(T,\nu)$ negative.
Proposition~\ref{FBT compatible} says that $\Xi_1$ is contained in a $B(T)$-cone, so Proposition~\ref{contained Bcone} says that $b_\gamma(T,\lambda')$ is nonnegative for every curve $\lambda'$ in $\Xi_1$.
Similarly $b_\gamma(T,\nu')$ is nonpositive for every curve $\nu'$ in $\Xi_2$.

Since $\Xi$ is a null tangle, we have $\b(T,\Xi_1)=-\b(T,\Xi_2)$.
Let $T'$ be the tagged triangulation obtained by flipping $\gamma$ in $T$.
Since the shear coordinates $b_\gamma(T,\lambda')$ all have weakly the same sign, Theorem~\ref{mut q-lam} (applied separately to each curve in $\Xi_1$) implies that $\b(T',\Xi_1)=\eta_\gamma^{B(T)}(\b(T,\Xi_1))$.
For the same reason, $\b(T',\Xi_2)=\eta_\gamma^{B(T)}(\b(T,\Xi_2))$.
But $B(T)$ does not have a row consisting all of zeros, so it does not have a column consisting all of zeros.
Thus since $\b(T,\Xi_1)$ and $\b(T,\Xi_2)$ have strictly opposite signs in the entry indexed by~$\gamma$, looking at~\eqref{mutation map def} we conclude  that $\b(T',\Xi_1)\neq-\b(T',\Xi_2)$.
This contradicts the assumption that $\Xi$ is a null tangle.
\end{proof}

\begin{prop}\label{null disorder 3 curve sep}
If $(\S,\M)$ does not admit null tangles of disorder $2$ or~$3$, then $(\S,\M)$ has the Curve Separation Property.
\end{prop}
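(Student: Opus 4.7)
The plan is to prove the contrapositive: from a failure of the Curve Separation Property, I will build a null tangle whose disorder is $2$ or $3$. So suppose $\lambda$ and $\nu$ are incompatible allowable curves for which, at every tagged triangulation $T$ (with all tags plain in the once-punctured boundary-free case) and every arc $\gamma$ of $T$, the integers $b_\gamma(T,\lambda)$ and $b_\gamma(T,\nu)$ never have strictly opposite signs. Fix one such $T$. By Theorem~\ref{mut q-lam}, the same sign condition survives every composition of flips, so Proposition~\ref{contained Bcone} places $\{\b(T,\lambda),\b(T,\nu)\}$ inside a common $B(T)$-cone $C$.

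By Theorem~\ref{q-lam bij}, there is a unique integral quasi-lamination $L$ with $\b(T,L)=\b(T,\lambda)+\b(T,\nu)$; let $\Lambda=\supp(L)$ and write $w_\mu$ for the weight of $\mu\in\Lambda$. The curves in $\Lambda$ are pairwise compatible, and the direction of Proposition~\ref{FBT compatible} that does not invoke Curve Separation produces a $B(T)$-cone $C'$ containing every $\b(T,\mu)$ and therefore $\b(T,L)$. Form the tangle $\Xi$ as the weighted union of $\lambda$ and $\nu$, each with weight $+1$, together with each $\mu\in\Lambda$ assigned weight $-w_\mu$. For any flip sequence $\k$ from $T$ to a tagged triangulation $T'$, Proposition~\ref{linear} says $\eta^{B(T)}_\k$ is linear on $C$ and on $C'$, so $\eta^{B(T)}_\k(\b(T,\lambda)+\b(T,\nu))=\b(T',\lambda)+\b(T',\nu)$ and $\eta^{B(T)}_\k(\b(T,L))=\b(T',L)$ by Theorem~\ref{mut q-lam}. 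Combined with the defining identity $\b(T,\lambda)+\b(T,\nu)=\b(T,L)$, this yields $\b(T',\Xi)=\mathbf{0}$. In the once-punctured boundary-free case, the flip-connected set of plain-tagged triangulations is precisely the set appearing in Definition~\ref{null tangle def}, so $\Xi$ is a null tangle in all cases.

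It remains to control the disorder of $\Xi$. Since $\{\lambda\}$, $\{\nu\}$, and $\Lambda$ are each sets of pairwise compatible curves, $\Xi$ is manifestly a weighted union of three straight tangles, so its disorder is at most $3$. For nontriviality, note that $\Lambda$ is pairwise compatible and so cannot contain both of the incompatible curves $\lambda$ and $\nu$; hence at least one of them is absent from $\Lambda$ and carries net weight $+1$ in $\Xi$. Proposition~\ref{no 1 2} forbids null tangles of disorder $1$, so the disorder of $\Xi$ lies in $\{2,3\}$, contradicting the hypothesis and completing the proof.

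The main delicate point will be the nontriviality step, which requires careful bookkeeping of the possible coincidences between $\lambda$, $\nu$, and the curves in $\Lambda$ so that nothing accidentally cancels; once that is handled, the rest is a direct assembly of Theorems~\ref{mut q-lam},~\ref{q-lam bij}, the easy direction of Proposition~\ref{FBT compatible}, and Propositions~\ref{contained Bcone},~\ref{linear}, and~\ref{no 1 2}.
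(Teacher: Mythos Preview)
Your proof is correct and follows essentially the same route as the paper's: both argue the contrapositive, use Theorem~\ref{mut q-lam} and Proposition~\ref{contained Bcone} to place $\b(T,\lambda)$ and $\b(T,\nu)$ in a common $B(T)$-cone, invoke Proposition~\ref{linear} for linearity, pull back the sum via Theorem~\ref{q-lam bij} to a quasi-lamination $L$, form the same tangle $\Xi$, and finish with Proposition~\ref{no 1 2}. Your treatment of nontriviality (that $\Lambda$ cannot contain both of the incompatible $\lambda,\nu$, so at least one survives with net weight $+1$) spells out what the paper compresses into the single clause ``$d\neq0$ because $\lambda$ and $\nu$ are incompatible.''
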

\begin{proof}
Suppose the Curve Separation Property fails.
Then there exist incompatible curves $\lambda$ and $\nu$ such that $b_\gamma(T',\lambda)$ and $b_\gamma(T',\nu)$ have weakly the same sign for all $T'$ and $\gamma\in T'$.
(If $(\S,\M)$ has no boundary components and exactly one puncture, then instead we know that $b_\gamma(T',\lambda)$ and $b_\gamma(T',\nu)$ have weakly the same sign for all $T'$ with all arcs tagged plain.)
Applying Theorem~\ref{mut q-lam}, we see that $\eta_\k^{B(T)}(\b(T,\lambda))$ and $\eta_\k^{B(T)}(\b(T,\nu))$ form a sign-coherent set for any sequence $\k$ of indices.
Proposition~\ref{contained Bcone} says that $\b(T,\lambda)$ and $\b(T,\nu)$ are contained in some $B$-cone, so $\eta_\k^{B(T)}(\b(T,\lambda)+\b(T,\nu))$ equals $\eta_\k^{B(T)}(\b(T,\lambda))+\eta_\k^{B(T)}(\b(T,\nu))$ for any~$\k$ by Proposition~\ref{linear}.
Now let $L$ be the unique quasi-lamination with $\b(T,L)=\b(T,\lambda)+\b(T,\nu)$.
Let $\Xi_1$ be the tangle containing only the curve $\lambda$ with weight $1$ and let $\Xi_2$ be the tangle containing only $\nu$ with weight $1$.
Let $\Xi_3$ be the tangle obtained from $L$ by negating all weights.
The weighted union of $\Xi_1$, $\Xi_2$, and $\Xi_3$ is a null tangle of disorder $d\le3$.
But $d\neq0$ because $\lambda$ and $\nu$ are incompatible.
Proposition~\ref{no 1 2} rules out the possibility $d=1$, so $d$ is $2$ or~$3$.
\end{proof}

\begin{cor}\label{null implies sep}
If $(\S,\M)$ has the Null Tangle Property, then it also has the Curve Separation Property.
\end{cor}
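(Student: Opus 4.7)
The plan is to derive this corollary as an essentially immediate consequence of Proposition~\ref{null disorder 3 curve sep}. Recall that the Null Tangle Property says every null tangle in $(\S,\M)$ is trivial, i.e., has all weights zero. A trivial tangle has disorder $0$ after deleting zero-weight curves, so the Null Tangle Property is precisely the statement that no null tangle has positive disorder.

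First I would observe that, in particular, if $(\S,\M)$ has the Null Tangle Property, then $(\S,\M)$ admits no null tangle of disorder $2$ and no null tangle of disorder $3$. The hypothesis of Proposition~\ref{null disorder 3 curve sep} is therefore satisfied.

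Applying Proposition~\ref{null disorder 3 curve sep} directly, I conclude that $(\S,\M)$ has the Curve Separation Property. Since this reduction is completely formal, there is no real obstacle to overcome: the substantive work has already been done in Proposition~\ref{null disorder 3 curve sep}, where the failure of Curve Separation was used to construct a null tangle of disorder at most $3$ out of the vectors $\b(T,\lambda)$, $\b(T,\nu)$, and the quasi-lamination $L$ with $\b(T,L)=\b(T,\lambda)+\b(T,\nu)$. The only minor point worth remarking on in the write-up is to explicitly note the interpretation of ``disorder $0$'' for a trivial tangle, so that the implication ``Null Tangle Property $\Rightarrow$ no null tangles of disorder $2$ or $3$'' reads cleanly. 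No separate argument is needed for the case where $(\S,\M)$ has no boundary components and exactly one puncture, since both properties are defined with the same restriction to plain-tagged triangulations in that case, and Proposition~\ref{null disorder 3 curve sep} already incorporates that convention.
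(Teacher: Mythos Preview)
Your proposal is correct and matches the paper's approach: the corollary is stated immediately after Proposition~\ref{null disorder 3 curve sep} without proof, precisely because the Null Tangle Property rules out null tangles of any positive disorder, so the hypothesis of that proposition is satisfied.
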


Combining results, we obtain Theorem~\ref{null tangle theorem}.

\begin{proof}[Proof of Theorem~\ref{null tangle theorem}]
Given \eqref{null tangle theorem null}, Theorem~\ref{curves pos basis}, Corollary~\ref{null implies sep} and Proposition \ref{null tangle prop} prove \eqref{null tangle theorem pos basis}, which trivially implies \eqref{null tangle theorem basis}, which implies~\eqref{null tangle theorem null} by Proposition~\ref{null tangle prop}.
\end{proof}

The relationship between the Curve Separation Property and the Null Tangle Property suggests that we might extend the proof of Theorem~\ref{curve separation} to establish the Null Tangle Property under the hypotheses of Theorem~\ref{curve separation}.
While this seems hard in general, some of the easier cases of the proof of the theorem extend to partial statements about the Null Tangle Property, which lead to the proof of Theorem~\ref{poly grow null tangle}.

\begin{prop}\label{null tangle two boundary}
If $\Xi$ is a null tangle, then no curve in the support of $\Xi$ connects two distinct boundary segments.
\end{prop}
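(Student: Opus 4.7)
The plan is to apply Proposition~\ref{one positive or one negative} with the triangulation tailored to $\lambda$. Suppose, for contradiction, that $\lambda$ lies in the support of the null tangle $\Xi$ and connects two distinct boundary segments $b_1$ and $b_2$. I will use exactly the construction of Case~1a in the proof of Theorem~\ref{curve separation}: form the thin quadrilateral around $\lambda$ whose two ``red'' sides are subsegments of $b_1$ and $b_2$, whose two ``blue'' sides are arcs following $\lambda$ closely, and take $\gamma$ to be the diagonal joining points~$2$ and~$3$, after completing to a tagged triangulation $T$. Inspection of Figure~\ref{shear fig} then gives $b_\gamma(T,\lambda)=-1$ at the unique crossing of $\lambda$ with $\gamma$.

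The crux is to show that $b_\gamma(T,\nu)\ge 0$ for every other curve $\nu$ in the support of $\Xi$. Because the two red edges of the quadrilateral are pieces of $\partial\S$, no allowable curve can pass through their interiors; a red edge can only participate in the local picture of a crossing as the location of an endpoint of $\nu$. Consulting Figure~\ref{shear fig}, a contribution of $-1$ at a crossing therefore forces the two local branches of $\nu$ to terminate at endpoints lying on $b_1$ and $b_2$ respectively. The two branches thus form a simple arc inside the quadrilateral joining $b_1$ to $b_2$ and meeting $\gamma$ exactly once. Since the quadrilateral is a topological disk and endpoints on $\partial\S$ slide freely under isotopy relative to $\M$, this arc must be isotopic to $\lambda$; and since $\nu$ is simple and both of its endpoints are already used up by this local piece, $\nu$ coincides with the arc up to isotopy, contradicting $\nu\neq\lambda$. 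Every remaining crossing of $\nu$ with $\gamma$ must have at least one local branch exiting through a blue arc, so each contributes $0$ (red--blue) or $+1$ (blue--blue), and hence $b_\gamma(T,\nu)\ge 0$.

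With $b_\gamma(T,\lambda)<0$ and $b_\gamma(T,\nu)\ge 0$ for every $\nu\ne\lambda$ in the support of $\Xi$, Proposition~\ref{one positive or one negative} forces $w_\lambda=0$, contradicting $\lambda$'s presence in the support. The main obstacle I expect is verifying that the degenerate variants of the Case~1a construction (points among $1,2,3,4$ coinciding, or a blue arc replaced by a boundary segment) do not spoil the essential property that the red edges remain boundary segments and the quadrilateral's interior remains a disk. These are precisely the degeneracies handled in the proof of Theorem~\ref{curve separation}, and in each variant the conclusion that a red--red crossing forces $\nu$ to be isotopic to $\lambda$ is preserved, so the sign analysis above goes through unchanged.
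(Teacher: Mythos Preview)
Your proof is correct and follows essentially the same approach as the paper: build the Case~1a quadrilateral around $\lambda$, observe that $\lambda$ is the unique allowable curve whose shear coordinate at $\gamma$ has a given strict sign (because the red edges are boundary segments, so a crossing of that sign forces both endpoints of $\nu$ onto $b_1$ and $b_2$ inside the thin disk, hence $\nu\simeq\lambda$), and invoke Proposition~\ref{one positive or one negative}. The paper states the uniqueness observation without justification and records $b_\gamma(T,\lambda)>0$ rather than $-1$; your sign appears flipped relative to the paper's orientation convention in Figure~\ref{shear fig}, but this is immaterial since Proposition~\ref{one positive or one negative} is symmetric in the two cases.
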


\begin{proof}
Suppose $\lambda$ is an allowable curve connecting two distinct boundary segments.
Construct a tagged triangulation $T'$ and distinguish a (purple) arc $\gamma\in T'$ as in \ref{two boundary case} of the proof of Theorem~\ref{curve separation}.
(See Figure~\ref{two boundary fig}.)
We observe that $\lambda$ is the unique allowable curve whose shear coordinate $b_\gamma(T,\lambda)$ is strictly positive and apply Proposition~\ref{one positive or one negative} to conclude that $\lambda$ cannot appear in the support of a null tangle.
\end{proof}

\begin{prop}\label{null tangle boundary puncture}
If $\Xi$ is a null tangle, then no curve in the support of $\Xi$ has one endpoint on a boundary segment and spirals into a marked point at the other end.
\end{prop}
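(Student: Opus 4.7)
The plan is to mirror the proof of Proposition~\ref{null tangle two boundary}, applying the construction from Case~\ref{boundary spiral case} of the proof of Theorem~\ref{curve separation}. Given an allowable curve $\lambda$ with one endpoint on a boundary segment and spiraling into a puncture $p$ at the other end, I would invoke that construction to produce a tagged triangulation $T$ and a tagged arc $\gamma \in T$ (see Figure~\ref{boundary puncture fig}). In the underlying ordinary triangulation $T^\circ$, the arc $\gamma$ is realized as a self-loop based at the boundary marked point $2$ enclosing a once-punctured monogon around $p$, paired with an auxiliary arc from $2$ to $p$ that completes the self-folded triangle; after applying $\tau$, the tagged arc $\gamma$ becomes the arc from $2$ to $p$ notched at $p$.

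Next, I would compute $b_\gamma(T, \lambda)$ via the self-folded rule of Figure~\ref{shear self-folded fig}. After isotopically minimizing the intersection of $\lambda$ with $\gamma$, the single crossing produced by $\lambda$ entering the monogon and spiraling into $p$ contributes a definite nonzero sign, say $+1$, determined by the direction of the spiral. The goal is then to verify that $b_\gamma(T, \nu) \le 0$ for every other allowable curve $\nu$, whereupon Proposition~\ref{one positive or one negative} yields $w_\lambda = 0$, contradicting the assumption that $\lambda$ appears in the support of a null tangle.

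The main obstacle is that the naive uniqueness-of-positive-shear-coordinate claim fails. Since $\gamma$ bounds a once-punctured disk whose only marked point is $p$, every allowable curve with nonzero shear coordinate at $\gamma$ must have a spiral endpoint at $p$, and the sign of each contribution is determined by the spiral direction. The analysis carried out in Case~\ref{boundary spiral case} already handles the curves $\nu$ incompatible with $\lambda$, namely those spiraling into $p$ in the direction opposite to $\lambda$'s, as well as closed or boundary-returning curves passing through the monogon; each of these contributes the opposite sign. The delicate case is that of allowable curves $\nu$ compatible with $\lambda$ but distinct from $\lambda$, ending at $p$ with the same spiral direction as $\lambda$, since such $\nu$ also contribute $+1$ to $b_\gamma(T, \nu)$. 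To close this gap I would refine the construction, perhaps by an additional flip or by choosing the distinguished arc to depend on the boundary endpoint of $\lambda$ (so that the quadrilateral containing $\gamma$ becomes a thin strip effectively tracking $\lambda$ near the boundary in the style of \ref{two boundary case}), forcing the only curve with strictly positive $b_\gamma$ to be $\lambda$ itself. This refinement is the crux of the argument.
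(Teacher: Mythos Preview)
Your approach is exactly the paper's: invoke the construction of \ref{boundary spiral case}, observe that $\lambda$ is the unique allowable curve with $b_\gamma(T,\lambda)>0$, and apply Proposition~\ref{one positive or one negative}. The paper's proof is two sentences long and asserts precisely this uniqueness.

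The obstacle you identify is not real. Your claim that a curve $\nu$ spiraling into $p$ with the \emph{same} direction as $\lambda$ also contributes $+1$ is incorrect. The sign of a crossing with $\gamma$ is governed by \emph{both} triangles adjacent to $\gamma$: on the outer side, the relevant edge is either the red boundary segment (where $\lambda$ begins) or the blue arc; on the inner (self-folded) side, the relevant edge is one side or the other of the fold arc, and this is what the spiral direction controls. A $+1$ contribution requires the curve to meet the \emph{outer red} edge, which is a boundary segment, so the curve must have an endpoint on that very segment. Since the blue arc and $\gamma$ follow $\lambda$ closely, the only allowable curve with an endpoint on that segment that then crosses $\gamma$ and spirals clockwise into $p$ is $\lambda$ itself. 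A curve $\nu\neq\lambda$ spiraling into $p$ in the same direction must enter the outer triangle through the blue arc; its single crossing of $\gamma$ then connects the outer blue edge to the inner red side, yielding contribution $0$, not $+1$. (Compare: the paper notes in \ref{boundary spiral case} that the curve agreeing with $\lambda$ except for spiral direction---outer red paired with inner blue---has shear coordinate zero; your case is the symmetric one, outer blue paired with inner red.)

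So no refinement is needed. Drop the final paragraph of your proposal and simply assert the uniqueness, and you have the paper's proof.
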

\begin{proof}
Suppose $\lambda$ is an allowable curve having one endpoint on a boundary segment and a spiral point at the other end.
Construct $T'$ and the purple arc $\gamma\in T'$ as in \ref{boundary spiral case} of the proof of Theorem~\ref{curve separation}.
(See Figure~\ref{boundary puncture fig}.)
Again, we observe that $\lambda$ is the unique allowable curve whose shear coordinate $b_\gamma(T,\lambda)$ is strictly positive and apply Proposition~\ref{one positive or one negative}.
\end{proof}

\begin{prop}\label{null tangle one boundary}
Suppose $(\S,\M)$ is a sphere having $b$ boundary components and $p$ punctures, with $b+p\le3$.
If $\Xi$ is a null tangle, then no curve in the support of $\Xi$ has both endpoints on the same boundary segment.
\end{prop}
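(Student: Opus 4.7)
The plan is to assume for contradiction that some allowable curve $\lambda$ with both endpoints on the same boundary segment lies in the support of a null tangle $\Xi$, and then to construct a tagged triangulation $T$ with a distinguished arc $\gamma\in T$ such that $b_\gamma(T,\lambda)$ is strictly positive while $b_\gamma(T,\nu)$ is nonpositive for every other allowable curve $\nu$ in $(\S,\M)$. Proposition~\ref{one positive or one negative} will then force $w_\lambda=0$, contradicting the assumption that $\lambda$ lies in the support of $\Xi$. This mirrors the strategy already used in Propositions~\ref{null tangle two boundary} and~\ref{null tangle boundary puncture}.

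First I would apply the construction of case~\ref{one boundary case} from the proof of Theorem~\ref{curve separation} (see Figure~\ref{one boundary fig}), producing a triangulation in which the purple arc $\gamma$ is the diagonal of a quadrilateral $Q$ built by following $\lambda$ closely. Direct inspection of Figure~\ref{shear fig}, exactly as in the Curve Separation argument, shows that $\lambda$ crosses $\gamma$ so as to contribute $+1$ and therefore $b_\gamma(T,\lambda)>0$.

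The crux is to verify that no other allowable curve $\nu$ satisfies $b_\gamma(T,\nu)<0$. Since $(\S,\M)$ is a sphere with $b+p\le 3$, the allowable curves fall into a short list of topological types, and I would go through each type in each of the relevant surfaces (disks with one or two punctures, annuli with zero or one puncture, and the three-holed sphere) to check that any crossing of $\gamma$ by $\nu$ contributes a nonnegative amount. Intuitively, the quadrilateral $Q$ hugs $\lambda$ tightly, so any curve entering $Q$ and crossing $\gamma$ is forced, by the restricted topology, to traverse it in the same wrapping sense as $\lambda$; there is simply not enough room on the ``other side'' of $\gamma$ to accommodate a curve that would cross with the opposite sign.

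The main obstacle is this case analysis, especially for allowable curves $\nu$ that spiral into punctures lying in the region cut off by $\gamma$, or closed curves separating those punctures. For some such $\nu$, I anticipate needing to exploit the flexibility observed at the end of case~\ref{one boundary case}, switching the pivot of the purple arc between points~$1$ and~$2$ (or relocating the entire construction to another boundary segment), in order to select a version of $T$ in which all crossings of $\gamma$ by $\nu$ contribute with the favorable sign. The hypothesis $b+p\le 3$ is essential here: it keeps the topology of the region cut off by $\gamma$ simple enough that no allowable curve can wind past $\gamma$ in the wrong direction, and it also bounds the number of puncture-spiral configurations one must handle.
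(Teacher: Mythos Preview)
Your plan has the right skeleton but two genuine gaps, plus a sign slip.

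First, the sign: you want $b_\gamma(T,\nu)\le 0$ for all $\nu\neq\lambda$, so the crux is to rule out other $\nu$ with $b_\gamma(T,\nu)>0$, not $<0$. Your first paragraph has this right; the third paragraph reverses it.

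More seriously, the claim that $b_\gamma(T,\nu)\le 0$ for \emph{every} allowable $\nu\neq\lambda$ is false. In the \ref{one boundary case} construction the outer red edge of the quadrilateral is the boundary segment itself, so a curve contributes $+1$ only by having an endpoint on that segment. But nothing forces its \emph{other} end to return there: a curve starting on that segment and then running off to a different boundary segment, or spiraling into a puncture in the inner region, can perfectly well cross $\gamma$ with sign $+1$. The paper does not try to rule these out geometrically; instead it observes that any $\nu$ with $b_\gamma(T,\nu)>0$ has an endpoint on the red segment, and then invokes Propositions~\ref{null tangle two boundary} and~\ref{null tangle boundary puncture} to conclude that such $\nu$ already have weight zero in $\Xi$. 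So the relevant comparison is only against curves in the \emph{support} of $\Xi$, and those competitors are already reduced to curves with both endpoints on the same segment. You need to use those propositions inside the argument, not just as a model.

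Second, even after that reduction, in the once-punctured annulus and the three-holed sphere there are \emph{several} allowable curves with both endpoints on the given segment (encircling different subsets of the interior features), and more than one of them can have $b_\gamma>0$ for a fixed $T$. No single choice of $T$ isolates $\lambda$; the paper handles this by an iterative argument. It first treats the $\lambda$ whose inner region is minimal (so it is the unique support curve of its type with positive coordinate), concludes $w_\lambda=0$, and then the remaining larger $\lambda$ becomes unique in turn. Your proposal to tweak the pivot between points~$1$ and~$2$ does not produce this stratification; you need the inductive ``peel off the smallest one first'' step.
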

\begin{proof}
Suppose $\Xi$ is a null tangle and let $\lambda$ be a curve in $\Xi$ with both endpoints on the same boundary segment.
In particular, $(\S,\M)$ is not a disk or once-punctured disk.
Construct $T'$ and the purple arc $\gamma\in T'$ in the bottom-left picture of Figure~\ref{one boundary fig}, in \ref{one boundary case} of the proof of Theorem~\ref{curve separation}.
Suppose $\nu$ is a curve in $\Xi$ with $b_\gamma(T,\nu)>0$.
Then $\nu$ has an endpoint on the same boundary segment as $\lambda$, so Propositions~\ref{null tangle two boundary} and~\ref{null tangle boundary puncture} imply that $\nu$ has both endpoints on the same boundary segment as $\lambda$.

If $(\S,\M)$ is a disk with $2$ punctures or an unpunctured annulus, then each boundary segment admits a unique allowable curve with both endpoints on that boundary segment.
Thus $\lambda$ is the unique curve in $\Xi$ with $b_\gamma(T,\lambda)>0$.
We apply Proposition~\ref{one positive or one negative} to see that $\lambda$ is not in the support of $\Xi$.

If $(\S,\M)$ is a sphere with three boundary components or a once-punctured annulus, then we argue in several steps, looking at the bottom-left picture of Figure~\ref{one boundary fig}.
If point $3$ is the puncture and the other boundary component is not contained in the gray striped area, then $\lambda$ is not allowable in Definition~\ref{quasi lam def}.
If point $3$ is on a boundary component and no marked point off of that boundary component is contained in the gray striped area, then any curve $\nu$ in $\Xi$ with $b_\gamma(T,\nu)$ positive is isotopic to $\lambda$.
Thus again $\lambda$ is the unique curve in $\Xi$ whose shear coordinate $b_\gamma(T,\lambda)$ is strictly positive, so again $\lambda$ is not in the support of $\Xi$.
We are left with a unique choice of $\lambda$ that might appear in the support of $\Xi$, namely the curve such that the area shaded striped gray contains two boundary components or a boundary component and a puncture.
But then $\lambda$ would be the unique curve in $\Xi$ with shear coordinate $b_\gamma(T,\lambda)$ strictly positive, so it is not in the support of~$\Xi$.
\end{proof}

\begin{proof}[Proof of Theorem~\ref{poly grow null tangle}]
Theorem~\ref{curve separation} says that the Curve Separation Property holds, so by Proposition~\ref{no 1 2}, null tangles of disorder $1$ and $2$ do not exist.

If $(\S,\M)$ is a disk with $0$ or $1$ punctures, then Propositions~\ref{null tangle two boundary} and~\ref{null tangle boundary puncture} amount to the Null Tangle Property.
In the remaining cases, Propositions~\ref{null tangle two boundary}, \ref{null tangle boundary puncture}, and~\ref{null tangle one boundary} show that the support of a null tangle has no curves that involve boundary segments.

If $(\S,\M)$ is an unpunctured annulus, there is one remaining allowable curve (the closed curve). 
A null tangle has disorder at most $1$, and therefore has disorder~$0$.

If $(\S,\M)$ is a disk with $2$ punctures, then there are five remaining allowable curves: one closed curve and four curves with distinct spiral points.
These can be arranged into two sets of pairwise compatible curves.
(Group together the two curves with counterclockwise spirals at one of the punctures, group together the two curves with clockwise spirals at that same puncture, and put the closed curve in either group.)
A null tangle has disorder at most $2$ and therefore has disorder~$0$.

If $(\S,\M)$ is a once-punctured annulus, then there are four remaining allowable curves: two closed curves and two curves which spiral into the puncture at both ends.
A null tangle has disorder at most $2$ and thus has disorder $0$.

If $(\S,\M)$ is a sphere with three boundary components, then the only remaining allowable curves are three compatible closed curves, so a null tangle has disorder at most $1$, and therefore has disorder $0$.
\end{proof}

\addtocontents{toc}{\mbox{ }}

\section*{Acknowledgments}
The author thanks Sergey Fomin for helpful converations.
The author also thanks an anonymous referee for many helpful comments, and especially for pointing out that an earlier version of  \cite[Proposition~4.6]{universal} (appearing here as Proposition~\ref{basis exists}) was incorrect.

\end{document}